\documentclass[11pt]{amsart}
\usepackage{graphicx}
\usepackage[dvips]{epsfig}
\usepackage{pinlabel}
\usepackage{amsmath}
\usepackage{amsfonts}
\usepackage{latexsym}
\usepackage{amssymb}
\usepackage[usenames]{color}
\usepackage{amsthm}
\usepackage[all]{xypic}
\usepackage{enumitem}
\usepackage[breaklinks=true]{hyperref}

\input xy
\xyoption{all}

\author{Baptiste Chantraine}
\author{Georgios Dimitroglou Rizell}
\author{Paolo Ghiggini}
\author{Roman Golovko}

\address{Universit\'e de Nantes, France.}
\email{baptiste.chantraine@univ-nantes.fr}

\address{University of Cambridge, United Kingdom.}
\email{g.dimitroglou@maths.cam.ac.uk}

\address{Universit\'e de Nantes, France.}
\email{paolo.ghiggini@univ-nantes.fr}

\address{Alfr\'{e}d R\'{e}nyi Institute of Mathematics, Hungary.}
\email{golovko.roman@renyi.mta.hu}

\theoremstyle{remark} \theoremstyle{definition}
\newtheorem{defn}{Definition}[section]
\newtheorem{Rem}[defn]{Remark}

\theoremstyle{plain}
\newtheorem{Thm}[defn]{Theorem}
\newtheorem{Prop}[defn]{Proposition}
\newtheorem{Lem}[defn]{Lemma}
\newtheorem{Cor}[defn]{Corollary}

\def\co{\colon\thinspace}

\DeclareMathAlphabet{\mathdj}{U}{msb}{m}{n}
\newcommand{\R}{\ensuremath{\mathdj{R}}}

\newcommand{\Z}{\ensuremath{\mathdj{Z}}}

\newcommand{\C}{\ensuremath{\mathdj{C}}}

\newcommand{\id}{\operatorname{Id}}

\begin{document}
\title{Floer homology and Lagrangian concordance}
\thispagestyle{empty}
\date{\today}
\thanks{The first author is partially supported by the ANR project COSPIN (ANR-13-JS01-0008-01). The second author is supported by the grant KAW 2013.0321 from the Knut and Alice Wallenberg Foundation. The fourth author is supported by the ERC Advanced Grant LDTBud.}
\subjclass[2010]{Primary 53D12; Secondary 53D42}

\keywords{Lagrangian concordance, Floer homology, Legendrian unknot,
non-symmetry}

\maketitle
\begin{abstract}
We derive constraints on Lagrangian concordances from Legendrian submanifolds of the standard contact sphere admitting exact Lagrangian fillings. More precisely, we show that such a
concordance induces an isomorphism on the level of bilinearised
Legendrian contact cohomology. This is used to prove the existence
of non-invertible exact Lagrangian concordances in all dimensions.
In addition, using a result of Eliashberg-Polterovich, we
completely classify exact Lagrangian concordances from the Legendrian unknot to itself in the tight contact-three sphere: every
such concordance is the trace of a Legendrian isotopy. We also discuss a high dimensional topological result related to this classification.
\end{abstract}

\markboth{Chantraine, Dimitroglou Rizell, Ghiggini, and
Golovko}{Floer homology and Lagrangian concordance}
\section{Introduction}
In this paper we are interested in exhibiting various rigidity phenomena for Lagrangian concordances between Legendrian submanifolds of the standard contact sphere $S^{2n+1}$ (or equivalently of the standard contact space $\R^{2n+1}$). Recall that the standard contact structure on $S^{2n+1} \subset \R^{2n+2}$ is given by $\xi_{st}:=\ker(\theta_0|_{TS^{2n+1}})$, where $\theta_0:=\frac{1}{2}\sum_{i=1}^{2n+2}(x_i dy_i -y_idx_i)$ is a one-form on $\R^{2n+2}$. A submanifold $\Lambda \subset (S^{2n+1},\xi_{st})$ is Legendrian if it is $n$-dimensional and tangent to $\xi_{st}$. A Lagrangian concordance is a special case of an exact Lagrangian cobordism $L \subset (\R^{2n+2} \setminus \{0\},d\theta_0)$ which is diffeomorphic to $\R \times \Lambda$; see Definition \ref{defn: lagrangian concordance}. In particular, Lagrangian concordant Legendrian submanifolds are diffeomorphic.

The classical (complete) obstruction for the existence of immersed Lagrangian cobordisms is the formal Lagrangian class \cite{PartialDiffRel,ClassificationLagrangeImmersions}. For example, when $n=1$, an \emph{immersed} Lagrangian concordance between two knots exists if and only if the two knots have the same rotation numbers. The classical obstruction to the
existence of an \emph{embedded} (not necessarily exact) oriented Lagrangian cobordism comes
from the \emph{Thurston-Bennequin invariant}, which is defined as
the linking number ${\tt
tb}(\Lambda):=\operatorname{lk}(\Lambda,\Lambda')$, where $\Lambda'$
is obtained by pushing $\Lambda$ slightly in the direction of the
Reeb vector field. If $L$ is a (not necessarily exact) oriented
Lagrangian cobordism from $\Lambda^-$ to $\Lambda^+$, the
corresponding Thurston-Bennequin invariants are related by
\begin{equation}
\label{eq:tbeq}
{\tt tb}(\Lambda^+)-{\tt tb}(\Lambda^-) = (-1)^{\frac{1}{2}(n^2-3n)}\chi(L,\Lambda_+)\end{equation}
as was shown in \cite{LagrConc} and \cite{NoteLagCob}.

In high dimension, recent results of Eliashberg-Murphy in \cite{LagCaps} implies that exact Lagrangian cobordisms satisfy an $h$-principle similar to the one in \cite{ClassificationLagrangeImmersions,PartialDiffRel} when the negative end $\Lambda^-$ is loose as defined in \cite{LooseLeg}. Thus, in order to expect rigidity phenomena, in the present paper we will study Lagrangian cobordisms whose
negative end $\Lambda^-$ admits an exact Lagrangian filling, i.e.~an
exact Lagrangian cobordism from $\emptyset$ to $\Lambda^-$.

In fact, it is well-known that a Legendrian submanifold of
$(S^{2n+1},\xi_{st})$ which admits an exact filling cannot be loose. In the forthcoming paper \cite{Cthulhu}, we will study exact Lagrangian cobordisms between Legendrian submanifolds admitting augmentations. This condition, in particular, implies that these Legendrian submanifolds are not loose.  The geometrical situation is however more involved due to the fact that Legendrian submanifolds admitting augmentations are not necessarily fillable.

Observe that there are very few known examples and constructions of cobordisms in the case when $\Lambda^-$ admits an exact Lagrangian filling. It seems like the only ones are so-called \emph{decomposable} cobordisms, i.e.~exact Lagrangian cobordisms built by concatenating cobordisms of the following two types:
\begin{itemize}
\item The trace of a Legendrian isotopy \cite{LagrConc}, \cite[Lemma 4.2.5]{findimlagint}.
\item The elementary Lagrangian handle attachment corresponding to an ambient surgery on the Legendrian submanifold \cite{LegKnotsLagCob}, \cite{LegAmbient}.
\end{itemize}

Our goal is to extract obstructions to the existence of a Lagrangian concordance from a Legendrian submanifold$\Lambda^-$ that admits an exact Lagrangian filling from its Legendrian contact homology \cite{IntroSFT}, \cite{DiffAlg}, \cite{LegKnotsLagCob}. One of the first results in this direction was in
\cite{LagrConcordNotASymmRel}, where it is shown that the Legendrian
representative $\Lambda_{m(9_{46})} \subset (S^3,\xi_{st})$ of the
knot $9_{46}$ described in Figure~\ref{fig:LCHlag} (with maximal
Thurston-Bennequin invariant) is not Lagrangian concordant to the
unknot $\Lambda_0$ with ${\tt tb}(\Lambda_0)=-1$. %Recall that
On the other hand $\Lambda_{m(9_{46})}$ is Lagrangian fillable by a disc which can be seen as the concatenation of the standard Lagrangian disc that fills $\Lambda_0$ and a Lagrangian concordance from $\Lambda_0$ to $\Lambda_{m(9_{46})}$. In other words, this result shows that the relation of being Lagrangian concordant is not symmetric, in particular there are Lagrangian concordances that cannot be inverted (unlike those arising from the trace of a Legendrian isotopy).

In dimension three, this result was later generalised in \cite{LegendrianMonopole} and \cite{ObstructionsLagConc}. The latter article is based upon the technique of rulings, which is a combinatorial invariant related to Legendrian contact homology which can be defined in the case $n=1$. It is shown there that a Lagrangian
concordance from $\Lambda^-$ to $\Lambda_0$ imposes restrictions on the possible rulings of $\Lambda^-$. Namely, such a Legendrian knot
cannot have two normal rulings.

Our main result is similar in spirit, but concerns the bilinearised Legendrian contact cohomology induced by a pair of augmentations. This invariant can be defined in all dimensions.

Our main rigidity result for Lagrangian concordances is
the following:
\begin{Thm}
\label{thm:main}
Suppose that $L$ is a Lagrangian concordance from $\Lambda^- $ to $\Lambda^+$ in $\R^{2n+1}$.
If $\varepsilon_0,\varepsilon_1$ are augmentations of the Chekanov-Eliashberg algebra of $\Lambda^-$ that are induced by exact Lagrangian fillings, it follows that the induced bilinearised map
\[ \Phi^{\varepsilon_0,\varepsilon_1}_L \co LCH_{\varepsilon_0,\varepsilon_1}^\bullet (\Lambda^-) \to LCH_{\varepsilon _0\circ \Phi,\varepsilon_1\circ\Phi}^\bullet(\Lambda^+)\]
in cohomology is an isomorphism.
\end{Thm}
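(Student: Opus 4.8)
The plan is to realise $\Phi^{\varepsilon_0,\varepsilon_1}_L$ as the morphism induced by the concordance between two connecting homomorphisms of acyclic ``Cthulhu'' complexes, and then to reduce the whole statement to an invariance property of the Floer cohomology of a pair of fillings under concatenation with a concordance. So fix exact Lagrangian fillings $L_0,L_1$ of $\Lambda^-$ inducing the augmentations $\varepsilon_0,\varepsilon_1$. The concordance $L$ induces a morphism $\Phi\co\mathcal{A}(\Lambda^+)\to\mathcal{A}(\Lambda^-)$ of Chekanov--Eliashberg algebras, whose bilinearisation with respect to $(\varepsilon_0,\varepsilon_1)$ is by definition the cochain map dual to the one inducing $\Phi^{\varepsilon_0,\varepsilon_1}_L$. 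Concatenating at the positive ends, $V_i:=L\odot L_i$ is an exact Lagrangian filling of $\Lambda^+$; by functoriality of the cobordism maps under concatenation, the augmentation of $\mathcal{A}(\Lambda^+)$ induced by $V_i$ is exactly $\varepsilon_i\circ\Phi$. Hence both the domain and the target of $\Phi^{\varepsilon_0,\varepsilon_1}_L$ are bilinearised Legendrian contact cohomologies associated to pairs of augmentations that are geometric, i.e.\ induced by honest fillings --- precisely the hypothesis of the theorem and the setting in which the Floer-theoretic machinery below applies.

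The next step is to recall, from the Floer theory for Lagrangian cobordisms of \cite{Cthulhu}, that for a pair $(L_0,L_1)$ of exact fillings of a Legendrian $\Lambda$ there is a short exact sequence of complexes
\[
0\longrightarrow LCC^\bullet_{\varepsilon_0,\varepsilon_1}(\Lambda)\longrightarrow \Cth^\bullet(L_0,L_1)\longrightarrow CF^\bullet(L_0,L_1)\longrightarrow 0,
\]
where $\Cth^\bullet(L_0,L_1)$ is the Cthulhu complex and $CF^\bullet(L_0,L_1)$ is generated by the (perturbed) intersection points of the two fillings. The crucial input is that $\Cth^\bullet(L_0,L_1)$ is acyclic when $L_0,L_1$ are genuine fillings, so that the connecting homomorphism of the associated long exact sequence is an isomorphism
\[
\delta\co HF^\bullet(L_0,L_1)\xrightarrow{\ \cong\ }LCH^{\bullet+1}_{\varepsilon_0,\varepsilon_1}(\Lambda).
\]
Moreover, a Lagrangian cobordism induces a filtered chain map of Cthulhu complexes whose restriction to the $LCC^\bullet$-subcomplex is the bilinearisation of the induced Chekanov--Eliashberg morphism and which descends on the $CF^\bullet$-quotient to the Floer-theoretic cobordism map. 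Applying this to the concordance $L$ together with the fillings above and using naturality of $\delta$ yields a commutative square
\[
\begin{array}{ccc}
HF^{\bullet}(L_0,L_1) & \xrightarrow{\ \delta\ } & LCH^{\bullet+1}_{\varepsilon_0,\varepsilon_1}(\Lambda^-)\\[2mm]
\big\downarrow & & \big\downarrow\\[2mm]
HF^{\bullet}(V_0,V_1) & \xrightarrow{\ \delta\ } & LCH^{\bullet+1}_{\varepsilon_0\circ\Phi,\varepsilon_1\circ\Phi}(\Lambda^+)
\end{array}
\]
in which the horizontal maps are isomorphisms and the right vertical map is, up to the degree shift, $\Phi^{\varepsilon_0,\varepsilon_1}_L$.

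It then suffices to show that the left vertical map $HF^\bullet(L_0,L_1)\to HF^\bullet(V_0,V_1)$ is an isomorphism, and this is where the hypothesis that $L$ is a concordance --- diffeomorphic to $\R\times\Lambda^-$ and cylindrical outside a compact set --- is essential. The filling $V_i=L\odot L_i$ coincides with $L_i$ below the concordance region, while over the concordance region $V_0$ and $V_1$ are both equal to $L$; perturbing one of them there by a push-off along the Reeb flow, of length smaller than that of the shortest Reeb chord of $\Lambda^-$, makes the two fillings disjoint over that region, so that the generators of $CF^\bullet(V_0,V_1)$ are exactly those of $CF^\bullet(L_0,L_1)$. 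An action estimate together with SFT-type compactness along the cylindrical collar then shows that no pseudo-holomorphic strip between such generators enters the concordance region, so that the two Floer differentials coincide and the cobordism map $CF^\bullet(L_0,L_1)\to CF^\bullet(V_0,V_1)$ is an isomorphism already on the chain level. Feeding this into the square above, $\Phi^{\varepsilon_0,\varepsilon_1}_L$ is an isomorphism.

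The main obstacle is the acyclicity of the Cthulhu complex for a pair of honest fillings: this is the deep analytic statement, and it is also what forces the augmentations to be geometric rather than abstract (the case of general augmentations is genuinely harder). Beyond that input, the remaining work is essentially bookkeeping: one must check that a Lagrangian cobordism induces a chain map of Cthulhu complexes that is filtered in the way claimed and whose $LCC^\bullet$-component is the bilinearised DGA morphism --- which requires controlling the broken holomorphic buildings contributing to the cobordism map --- and carry out the compactness and transversality arguments, including the interpolation between the generic perturbation of the fillings and the Reeb push-off over the concordance, that identify $CF^\bullet(L_0,L_1)$ with $CF^\bullet(V_0,V_1)$.
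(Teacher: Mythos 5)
Your proposal is correct and follows essentially the same route as the paper: the Cthulhu complex you invoke is exactly the wrapped Floer complex of the pair of fillings used in Section \ref{sec:proof-theor-refthm:m}, its acyclicity is Proposition \ref{prop:invariance}, your identification of $CF^\bullet(L_0,L_1)$ with $CF^\bullet(V_0,V_1)$ via a short Reeb push-off over the concordance region is the paper's construction of $\phi^\epsilon_H$ together with the neck-stretching argument giving $d_0=d_0'$, and your commutative square is precisely the factorisation \eqref{eq:factor} of $\delta$ through $\Phi^{\varepsilon_0,\varepsilon_1}_L$ followed by the two-out-of-three argument. The only cosmetic difference is that you phrase the key step as naturality of the connecting map under a cobordism-induced map of Cthulhu complexes, whereas the paper establishes this factorisation directly by analysing the broken buildings under the neck stretch (Theorem \ref{thm:pushoff}).
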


\begin{Rem}
  \begin{enumerate}
  \item The previous theorem applies in the more general case when the ambient contact manifold is a contactisation of a Liouville domain as defined in Section \ref{sec:setup}.
\item As was observed in \cite{NoteLagCob}, it follows from the isomorphism stated in Theorem \ref{malgthinstregbam} that there is an isomorphism
\[LCH_\varepsilon^\bullet (\Lambda^-) \simeq LCH_{\varepsilon\circ\Phi_L}^\bullet (\Lambda^+)\]
of linearised cohomologies in the case when $\varepsilon$ is induced
by a filling. Namely, both the left and the right-hand side are
isomorphic to the singular homology of the filling (with a shift in
grading).
  \end{enumerate}
\end{Rem}
In general, we will use $\Lambda_0 \subset  (\R^{2n+1},\xi_{st}) \subset (S^{2n+1},\xi_{st})$ to denote the standard Legendrian $n$-dimensional sphere, which can be obtained as the intersection $S^{2n+1} \cap \{ \mathbf{y}=0\}$. The standard representative of this Legendrian submanifold admits a unique augmentation of its Legendrian contact homology algebra. The following
important corollary follows from the fact that
$LCH_{\varepsilon}^\bullet (\Lambda_0)$ is
one-dimensional and concentrated in degree $n$ (see \cite[Example
4.2]{NonIsoLeg}) for this augmentation.
\begin{Cor}
If the Legendrian submanifold $\Lambda^- \subset (S^{2n+1},\xi_{st})$ admits two fillings inducing augmentations $\varepsilon_i$, $i=0,1$, for which $LCH_{\varepsilon_0,\varepsilon_1}^\bullet (\Lambda^-)$ is not one-dimensional and concentrated in degree $n$, it follows that there is no exact Lagrangian concordance to from $\Lambda^-$ to $\Lambda_0$.
\end{Cor}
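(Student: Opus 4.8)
The plan is to derive this corollary directly from Theorem \ref{thm:main} by contraposition. Suppose, for contradiction, that there exists an exact Lagrangian concordance $L$ from $\Lambda^-$ to $\Lambda_0$. First I would note that $\Lambda^-$ is given two exact Lagrangian fillings $W_0, W_1$ inducing the augmentations $\varepsilon_0,\varepsilon_1$ of the Chekanov--Eliashberg algebra of $\Lambda^-$. Concatenating each filling $W_i$ with the concordance $L$ produces an exact Lagrangian filling $W_i \odot L$ of $\Lambda_0$; the augmentation of the Chekanov--Eliashberg algebra of $\Lambda_0$ induced by this filling is precisely $\varepsilon_i \circ \Phi_L$, where $\Phi_L$ is the DGA morphism associated to $L$. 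Hence $\varepsilon_0 \circ \Phi_L$ and $\varepsilon_1 \circ \Phi_L$ are augmentations of $\Lambda_0$ that are induced by exact Lagrangian fillings.

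Next I would invoke Theorem \ref{thm:main}, which applies precisely because $\varepsilon_0$ and $\varepsilon_1$ are induced by fillings: it gives that the induced bilinearised map
\[
\Phi^{\varepsilon_0,\varepsilon_1}_L \co LCH^\bullet_{\varepsilon_0,\varepsilon_1}(\Lambda^-) \to LCH^\bullet_{\varepsilon_0\circ\Phi_L,\,\varepsilon_1\circ\Phi_L}(\Lambda_0)
\]
is an isomorphism. In particular the two bilinearised Legendrian contact cohomology groups are isomorphic as graded vector spaces. It therefore suffices to understand the right-hand side for the standard Legendrian sphere $\Lambda_0$.

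The remaining point is that any augmentation of the Chekanov--Eliashberg algebra of $\Lambda_0$ coming from an exact filling must equal the unique augmentation $\varepsilon$ of $\Lambda_0$ referenced in the excerpt: indeed $\Lambda_0$ has a unique augmentation, as recalled before the statement. Consequently $\varepsilon_0 \circ \Phi_L = \varepsilon_1 \circ \Phi_L = \varepsilon$, and the bilinearised cohomology $LCH^\bullet_{\varepsilon_0\circ\Phi_L,\varepsilon_1\circ\Phi_L}(\Lambda_0)$ reduces to the linearised cohomology $LCH^\bullet_\varepsilon(\Lambda_0)$, which by \cite[Example 4.2]{NonIsoLeg} is one-dimensional and concentrated in degree $n$. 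Combining this with the isomorphism from Theorem \ref{thm:main} forces $LCH^\bullet_{\varepsilon_0,\varepsilon_1}(\Lambda^-)$ to be one-dimensional and concentrated in degree $n$, contradicting the hypothesis. This proves the corollary.

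The only genuinely delicate step is the identification of the augmentation induced by the concatenated filling $W_i \odot L$ with $\varepsilon_i \circ \Phi_L$, i.e.\ the functoriality of augmentations under concatenation of exact Lagrangian cobordisms; everything else is formal once Theorem \ref{thm:main} and the uniqueness of the augmentation of $\Lambda_0$ are granted. This functoriality is standard in the theory (the DGA morphism of a concatenation is the composition of the DGA morphisms, and an exact filling is the concatenation of the empty cobordism with itself), but one should make sure the chain-level maps compose on the nose, or at least up to the chain homotopy that does not affect the induced map on augmentation varieties.
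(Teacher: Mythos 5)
Your proof is correct and follows essentially the same route as the paper, which deduces the corollary directly from Theorem \ref{thm:main} together with the uniqueness of the augmentation of $\Lambda_0$ and the fact that $LCH^\bullet_\varepsilon(\Lambda_0)$ is one-dimensional and concentrated in degree $n$. The discussion of concatenated fillings $W_i\odot L$ is harmless but unnecessary: since $\Lambda_0$ admits a unique augmentation, $\varepsilon_i\circ\Phi_L$ must equal it regardless of whether it is induced by a filling, so the contradiction follows at once.
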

When $n=1$, using the fact that there is a correspondence between normal rulings and augmentations \cite{ChekElInv,AugRul}, this statement is a special case of the results in  \cite[Theorem 1.2]{ObstructionsLagConc}.

In the case when $n=1$ a result due to Eliashberg and Polterovich \cite{LocalLagrangian} shows that, up to
compactly supported Hamiltonian isotopy, the unknot $\Lambda_0$ has
a unique filling by a disc. Then a fillable Legendrian knot $\Lambda$ which is exact Lagrangian cobordant to the unknot is automatically
\emph{doubly slice}, as observed in \cite[Theorem
1.2]{LagrConcordNotASymmRel}. A smooth knot in $S^3$ is called slice
if it bounds an embedded disc in $D^4$, and it is called doubly
slice if can be obtained as the transverse intersection of an
unknotted embedding of $S^2 \hookrightarrow \R^4$ with the unit
sphere $S^3 \subset \R^4$. In Theorem \ref{thm:concisotopy} we
elaborate on this idea to give a complete classification of exact
Lagrangian concordances from $\Lambda_0$ to itself up to compactly
supported Hamiltonian isotopy.

A similar classification result in high dimensions is out of reach of current technology. However, there are strong topological constraints in the case when $\Lambda^+ \subset (\R^{2n+1},\xi_{st})$ has a single generic Reeb chord and admits an exact Lagrangian filling (e.g.~this is the case for the standard representative of $\Lambda_0$). Restrictions on the topology and smooth structure of such a Legendrian submanifold have previously been established in \cite{ExoticSpheres,EkholmSmith} by Ekholm-Smith. Also, in the case when $\Lambda=\Lambda_0$, a result due to Abouzaid, Fukaya-Seidel-Smith, Nadler, and Kragh (see Remark \ref{rem:bull}) implies that any exact Lagrangian  is contractible. We generalise this result in Theorem \ref{thm:fukayaseidelsmith}.

We end with the following corollary. Using the result of Eliashberg-Polterovich it was shown in \cite[Theorem 6.1]{LagrConcordNotASymmRel} that every exact Lagrangian cobordism from the standard representative of the one-dimensional Legendrian unknot $\Lambda_0$ to itself is a concordance which, moreover, induces the identity automorphism of its Chekanov-Eliashberg algebra. Combining Theorem \ref{thm:main}, Theorem \ref{thm:fukayaseidelsmith}, together with an algebraic consideration, we obtain an analogous result in high dimensions as well:

\begin{Cor}\label{sec:introduction-cor}
Suppose that $n \neq 3,4$ and let $\Lambda^+ \subset (\R^{2n+1},\xi_{st})$ be a Legendrian submanifold having a single generic Reeb chord (e.g.~$\Lambda^+=\Lambda_0$), or suppose that $n=4$ and that $\Lambda^+ \simeq S^4$. Any exact Lagrangian cobordism $L$ from a Legendrian submanifold $\Lambda^-$ to $\Lambda$, where $\Lambda^-$ moreover admits an exact Lagrangian filling, induces a unital DGA morphism
\[ \Phi_L \colon (\mathcal{A}(\Lambda^+),\partial_{\Lambda^+}) \to  (\mathcal{A}(\Lambda^-),\partial_{\Lambda^-}) \]
which is a monomorphism admitting a left-inverse. In the case when $\Lambda^-$ satisfies the same assumptions as $\Lambda^+$, $L$ is moreover a concordance and the above map is an isomorphism.
\end{Cor}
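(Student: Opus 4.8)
\emph{Setup and the algebra of $\Lambda^+$.} The cobordism $L$ induces the unital DGA morphism $\Phi_L$ by Ekholm's functoriality for exact Lagrangian cobordisms, so what remains is algebraic. First I would pin down the source: since $\Lambda^+$ has a single generic Reeb chord $c$, the Chekanov--Eliashberg algebra $\mathcal{A}(\Lambda^+)$ is the free unital associative algebra $\Z\langle c\rangle$ on one generator. For $n=1$ the assertion is \cite[Theorem~6.1]{LagrConcordNotASymmRel}, so assume $n\geq 2$. Gluing the given filling $W^-$ of $\Lambda^-$ on top of $L$ produces an exact filling $W^+:=W^-\odot L$ of $\Lambda^+$, which by Theorem~\ref{thm:fukayaseidelsmith} is contractible (this is where $n\neq 3$, and for $n=4$ the hypothesis $\Lambda^+\simeq S^4$, are used; compare \cite{ExoticSpheres,EkholmSmith}). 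Since the $\varepsilon^+$-linearised complex of $\Lambda^+$ is $\Z\langle c\rangle$ in degree $|c|$ with vanishing differential, Seidel's isomorphism $LCH^\bullet_{\varepsilon^+}(\Lambda^+)\cong H^{n-\bullet}(W^+)$ forces $|c|=n$ (the right-hand side is nonzero only in degree $0$). As $n\geq 2$, the only monomials in $c$ sit in degrees $0,n,2n,\dots$, leaving no room for $\partial_{\Lambda^+}c$ in degree $n-1\geq 1$; hence $\partial_{\Lambda^+}c=0$ and $(\mathcal{A}(\Lambda^+),\partial_{\Lambda^+})=(\Z\langle c\rangle,0)$.

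\emph{Monomorphism.} Set $x:=\Phi_L(c)\in\mathcal{A}(\Lambda^-)$; since $\Phi_L$ is a chain map, $x$ is $\partial_{\Lambda^-}$-closed of degree $n$. A free associative algebra over the ground field (or over $\Z$) is an integral domain and torsion free, so $x^k\neq 0$ for all $k$ as soon as $x\neq 0$, and then $\Phi_L\co\Z\langle c\rangle\to\mathcal{A}(\Lambda^-)$ is injective. To establish $x\neq 0$ I would use the augmentation $\varepsilon^-$ of $\mathcal{A}(\Lambda^-)$ induced by $W^-$ and the isomorphism $LCH^\bullet_{\varepsilon^-}(\Lambda^-)\cong H^{n-\bullet}(W^-)$: the linearisation $\pi_1(x)$ is a cocycle in degree $n$, and by naturality of this isomorphism with respect to the concatenation $W^-\odot L=W^+$ the class $[\pi_1(x)]$ corresponds to the image of a generator of $H^0(W^+)\cong\Z$ under the restriction map $H^0(W^+)\to H^0(W^-)$, which is again a generator of $LCH^n_{\varepsilon^-}(\Lambda^-)\cong H^0(W^-)\cong\Z$. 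In particular $x\neq 0$, so $\Phi_L$ is a monomorphism.

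\emph{Left inverse.} I would construct a unital DGA morphism $\Psi\co\mathcal{A}(\Lambda^-)\to(\Z\langle c\rangle,0)$ with $\Psi\circ\Phi_L=\mathrm{id}$ by successive approximation in powers of $c$: take $\Psi=\varepsilon^-$ on degree-$0$ generators and determine the $c^k$-coefficients inductively, the condition being of the form $\rho_k\circ\partial^{\varepsilon^-}_1=(\text{an explicit expression in }\rho_{<k})$ on the degree-$kn$ part of the $\varepsilon^-$-linearised complex. The obstruction to solving this lies in $LCH^{kn+1}_{\varepsilon^-}(\Lambda^-)\cong H^{\,n-kn-1}(W^-)$, which vanishes for every $k\geq 1$ since $n-kn-1<0$; the remaining indeterminacy, in degree $n$, is $LCH^n_{\varepsilon^-}(\Lambda^-)\cong H^0(W^-)\cong\Z$ and is used to impose $\Psi(x)=c$, which is possible precisely because $[\pi_1(x)]$ generates this $\Z$. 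The two steps that need genuine care are this naturality input — that $[\pi_1(\Phi_L(c))]$ is a \emph{generator}, not merely nonzero — and checking that the successive obstructions are honest cocycles landing in the stated (vanishing) groups; the rest is bookkeeping.

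\emph{The symmetric case.} If $\Lambda^-$ satisfies the same hypotheses as $\Lambda^+$, then by Theorem~\ref{thm:fukayaseidelsmith} both $W^-$ and $W^+$ are contractible, so the exact sequence of the pair $(W^+,W^-)$ together with excision gives $H_\bullet(L,\Lambda^-)\cong H_\bullet(W^+,W^-)=0$; thus $L$ is an $h$-cobordism between the simply connected homotopy spheres $\Lambda^\pm$ (recall $n\geq 2$). By the $h$-cobordism theorem ($\dim L=n+1\geq 5$ when $n\geq 4$, while $n=2$ is the three-dimensional Poincar\'e conjecture; the case $n=3$ is excluded here, as in Theorem~\ref{thm:fukayaseidelsmith}, because of the unresolved state of smooth four-manifold topology) $L$ is diffeomorphic to $\R\times\Lambda^-$, hence is a Lagrangian concordance. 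Now Theorem~\ref{thm:main} applies and shows that $\Phi_L$ induces an isomorphism on bilinearised cohomology. Since both $\mathcal{A}(\Lambda^\pm)$ are free on a single chord in degree $n$ with vanishing differential, the pertinent bilinearised complexes are $\Z$ concentrated in degree $n$, so this isomorphism forces $\Phi_L(c)$ to equal $\pm(\text{the generating chord of }\mathcal{A}(\Lambda^-))$; therefore $\Phi_L$ is an isomorphism.
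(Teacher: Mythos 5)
The crucial unproved step is your claim that $[\pi_1(\Phi_L(c))]$ is a \emph{generator} of $LCH^n_{\varepsilon^-}(\Lambda^-)$, which you justify by ``naturality of Seidel's isomorphism with respect to the concatenation $W^-\odot L=W^+$''. No such naturality for an arbitrary exact Lagrangian cobordism $L$ is available in this paper: Theorem \ref{thm:main}, and the wrapped Floer homology argument behind it, is proved only for \emph{concordances} (the proof needs $e^t|_L$ to be deformable to a function without critical points in order to build the wrapping Hamiltonian), and the compatibility of the isomorphism of Theorem \ref{malgthinstregbam} with general cobordism maps is exactly the kind of statement the paper does not establish. The paper's proof avoids it by a different reduction: write the given filling as $W^-=L_0\odot V$, where $L_0$ is the standard disc filling of a small Legendrian unknot $\Lambda_0$ obtained by removing a Darboux ball around a point of $W^-$, and $V$ is a cobordism from $\Lambda_0$ to $\Lambda^-$. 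Then $W^-\odot L$ is a disc by Theorem \ref{thm:fukayaseidelsmith}, so $V\odot L$ is a \emph{concordance} from $\Lambda_0$ to $\Lambda^+$ (h-cobordism theorem for $n>4$, Kervaire--Milnor type arguments for $n=4$); Theorem \ref{thm:main} now applies to $V\odot L$, and since $\mathcal{A}(\Lambda_0)$ and $\mathcal{A}(\Lambda^+)$ are each free on one generator with vanishing differential, $\Phi_{V\odot L}=\Phi_V\circ\Phi_L$ is a DGA isomorphism. This yields injectivity of $\Phi_L$ and the explicit left inverse $\Phi_{V\odot L}^{-1}\circ\Phi_V$ in one stroke, with no obstruction theory. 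Your argument could be repaired by exactly this reduction (apply your naturality input only to the concordance $V\odot L$, where it is Theorem \ref{thm:main}), but as written it rests on a result not proven here.

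There are secondary problems as well. Your injectivity deduction ``free algebras are domains, so $x^k\neq 0$, hence $\Phi_L$ is injective'' is incomplete: injectivity needs the powers $x^k$ to be linearly independent, which follows by degree reasons only once you know the gradings are honest $\Z$-gradings; this requires observing that the Maslov class of $W^-$ vanishes because $W^-$ sits inside the contractible filling $W^+$, a point you never address and which is equally needed to place your obstructions in the groups $LCH^{kn+1}_{\varepsilon^-}(\Lambda^-)\cong H_{n-kn-1}(W^-;\Z_2)=0$ rather than in groups graded modulo the Maslov number. The successive-approximation construction of the left inverse is in any case only sketched (in particular, the verification that the obstruction cochains are cocycles is deferred), whereas the paper gets the left inverse for free as above. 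Finally, in the symmetric case you invoke the h-cobordism theorem for ``$\dim L=n+1\geq 5$'', but the smooth h-cobordism theorem requires cobordism dimension at least six; for $n=4$ one must instead argue via $\Theta_5=0$ and the disc theorem, as indicated in the proof of Theorem \ref{thm:fukayaseidelsmith}. That part of your argument otherwise coincides with the paper's.
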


We now give an outline of the paper. In Section \ref{sec:background}, we review all preliminary definitions and results in order to prove the main results of the paper. In particular, we recall the definitions of a Lagrangian cobordisms and concordances in Section \ref{sec:setup}. We recall the front spinning construction in Section \ref{sec:front-spinn-constr}. In sections  \ref{sec:bilin-legendr-cont} to \ref{sec:relat-betw-wrapp} we recall the definition of bilinearised Legendrian contact homology and its relation with the Floer homology of  Lagrangian fillings. In sections \ref{sec:moduli} to \ref{sec:relat-counts-pseudo} we define the moduli spaces of pseudo-holomorphic curves. Notably in Section \ref{sec:trans} we use the results by Lazzarini in \cite{ExistenceInjective} to prove a useful transversality property for pseudo-holomorphic curves with one positive puncture. Theorem \ref{thm:main} is proved in Section \ref{sec:proof-theor-refthm:m} by a computation using wrapped Floer homology. Section \ref{sec:lagr-conc-legendr} is devoted to the study of Lagrangian concordances from $\Lambda_0$ to $\Lambda_0$ inside $\R^4\setminus \{0\}$, which we classify up to Hamiltonian isotopy. We also give a proof of Theorem \ref{thm:fukayaseidelsmith} using wrapped Floer homology with local coefficients. We conclude in Section \ref{sec:nonsymm} with non-symmetry results for high-dimensional Lagrangian concordances in the above rigid settings.

\section*{Acknowledgements}
\label{sec:aknowledgments}
The first and third authors would like to thank the organisers of the 21st G\"{o}kova Topology Conference, May 2014.

\section{Background}\label{sec:background}
\subsection{Geometric set-up}
\label{sec:setup}

A \emph{contact manifold} is a pair $(Y,\xi)$ consisting of an
odd-dimensional manifold $Y$ together with a smooth maximally
non-integrable field of tangent hyperplanes $\xi \subset TY$. Given
the choice of a contact one-form $\alpha$, i.e. a one-form such that
$\xi=\ker \alpha$, the associated \emph{Reeb vector field
$R_\alpha$} is uniquely determined by
\[ \iota_{R_\alpha}\alpha=1, \:\: \iota_{R_\alpha}d\alpha=0.\]
The \emph{symplectisation} of $(Y,\alpha)$ is the exact symplectic
manifold $(\R \times Y,d(e^t\alpha))$, where $t$ denotes the standard coordinate of the $\R$-factor. In general, an \emph{exact
symplectic manifold} is a pair $(P,d\theta)$ consisting of an even-dimensional
smooth manifold $P$ together with an exact non-degenerate two-form
$d\theta$.

An $n$-dimensional submanifold $\Lambda \subset (Y^{2n+1},\xi)$ is called \emph{Legendrian} if $T\Lambda \subset \xi$. A half-dimensional submanifold, or immersed submanifold, $L \subset (P,d\theta)$ is called \emph{exact Lagrangian} if the pull-back of $\theta$ to $L$ is exact. We are interested in the following relation between Legendrian submanifolds.

\begin{defn}\label{def:lagcob}
A properly embedded exact Lagrangian submanifold $L \subset \R \times Y$ of the symplectisation without boundary is called an \emph{exact Lagrangian cobordism from $\Lambda^-$ to $\Lambda^+$} if it is of the form
\[ L = (-\infty,-M] \times \Lambda^- \: \cup \: \overline{L} \: \cup  \: [M,+\infty) \times \Lambda^+\]
for some number $M>0$, where $\overline{L} \subset [-M,M] \times Y$ is compact with boundary $\partial\overline{L}=\overline{L}\cap [-M,M]\times Y \cong \Lambda^-\sqcup\Lambda^+$, and if $e^t\alpha$ has a primitive which is constant on $L \cap \{ t \le -M \}$ (we refer to \cite{LegKnotsLagCob} and \cite{Glob_ex_cob} for details concerning the last condition).
\end{defn}

Observe that it follows from the definition that $\Lambda^\pm \subset Y$ necessarily are Legendrian submanifolds.

The Legendrian submanifold $\Lambda^{+}$ is called the  {\em positive end of $L$}, while $\Lambda^{-}$ is called the {\em negative end of $L$}. In the special case when $\Lambda^-=\emptyset$ we say that $L$ is an \emph{exact Lagrangian filling} of $\Lambda^+$.

Lagrangian concordances are particular examples of exact Lagrangian cobordisms.
\begin{defn}\label{defn: lagrangian concordance}
A \emph{Lagrangian concordance} $L$ from a Legendrian submanifold
$\Lambda^-\subset (Y,\alpha)$ to a Legendrian submanifold
$\Lambda^+\subset (Y,\alpha)$ in a symplectisation $(\R \times
Y,d(e^t\alpha))$ is a Lagrangian cobordism from $\Lambda^-$ to
$\Lambda^+$ whose compact part $\overline{L}$ is diffeomorphic to
$[0,1] \times \Lambda^-$.
\end{defn}
Note that if there exists a Lagrangian concordance from $\Lambda^-$
to $\Lambda^+$ then, in particular, $\Lambda^-$ is diffeomorphic to
$\Lambda^+$. The exactness of a Lagrangian concordance is immediate as all the topology is concentrated in the cylindrical ends, where the form $e^t\alpha$ vanishes.

The \textit{Maslov number} $\mu_L$ of a Lagrangian cobordism $L$ is defined to be the generator of the image of the \emph{Maslov class} $\mu: H_2(\R\times Y,L)\rightarrow \mathbb{Z}$ (see e.g.~\cite{NonIsoLeg} for more details). Its relevance comes from the fact that gradings in Floer homology are defined modulo $\mu_L$. In the special case of a Lagrangian concordance, grading consideration are vastly simplified as the Maslov number of a concordance is twice the rotation number of the Legendrian (positive or negative) end.

The \emph{contactisation} of an exact symplectic manifold $(P,d\theta)$ is the contact manifold $(P \times \R,dz+\theta)$ together with a natural choice of contact form, where $z$ denotes a coordinate on the $\R$-factor. The Reeb vector field with respect to this contact form is given by $\partial_z$.  The natural projection $\Pi_{\operatorname{Lag}}:P \times \R\to P$ is called the {\em Lagrangian projection}. Given a closed
Legendrian submanifold $\Lambda\subset P\times \R$, it follows that
$\Pi_{\operatorname{Lag}}(\Lambda) \subset (P,d\theta)$ is an exact Lagrangian immersion whose double points correspond to integral curves of the Reeb
vector field $\partial_{z}$ having endpoints on $\Lambda$; such
integral curves are called the {\em Reeb chords on $\Lambda$}.
The set of all Reeb chords on $\Lambda$ will be denoted by $\mathcal Q(\Lambda)$.
We say that $\Lambda$ is {\em chord generic} if the double points of $\Pi_{\operatorname{Lag}}(\Lambda)$ are transverse, which in particular implies that $|\mathcal Q(\Lambda)|<\infty$.

We will here mainly be interested in the following two contact manifolds:
the standard $\R^{2n+1}$ and the standard $S^{2n+1}$.
First we define the standard $\R^{2n+1}$. The contactisation of the standard symplectic vector space $(\R^{2n},-d \theta_{\R^n})$, where $\theta_{\R^n}=\sum_{i=1}^n y_idx_i$ is the Liouville form, is the standard contact $(2n+1)$-space $(\R^{2n}\times \R,\xi_{st}:=\ker \alpha_0)$, $\alpha_0:=dz-\theta_{\R^{n}}$.  Note that Gromov's theorem \cite{Gromov} implies that a closed exact Lagrangian immersion in $\R^{2n}$ must have at least one double-point or, equivalently, that any closed Legendrian submanifold of the standard contact vector space must have a Reeb chord.

Then we define the standard $S^{2n+1}$. Considering the primitive $\theta_0:=\frac{1}{2}\sum_{i=1}^{n+1}(x_i dy_i -y_idx_i)$ of the standard symplectic form on $\R^{2n+2}$, the standard contact sphere is given by $(S^{2n+1},\xi_{st}:=\ker \alpha_{st})$, where $\alpha_{st}:=\theta_0|_{TS^{2n+1}}$ is induced by the standard embedding $S^{2n+1} \subset \R^{2n+2}$ as the unit sphere. Observe that the complement of a point of the standard contact sphere is contactomorphic to the standard contact vector space \cite[Proposition 2.1.8]{IntroContact}. The exact symplectic manifold $(\R^{2n+2} \setminus \{0\},d\theta_0)$ can be identified with the symplectisation of $(S^{2n+1},\alpha_{st})$.

\subsection{The front spinning construction}\label{sec:front-spinn-constr}
Given a Legendrian submanifold $\Lambda \subset (\R^{2n+1},\xi_{st})$, the so called front spinning construction produces a Legendrian embedding of $\Lambda \times S^1$ inside $(\R^{2(n+1)+1},\xi_{st})$, as described by Ekholm, Etnyre and Sullivan \cite{NonIsoLeg}. In \cite{NoteOnFrontSpinning} this construction was extended to the $S^m$-front spinning, which produces a Legendrian embedding of $\Lambda \times S^m$ inside $(\R^{2(n+m)+1},\xi_{st})$. It was also shown that this construction extends to exact Lagrangian cobordisms. Below we recall these constructions.

The embedding
\begin{gather*}
\R \times S^n \hookrightarrow \R^{n+1},\\
(t,\mathbf{p}) \mapsto e^t\mathbf{p},
\end{gather*}
induces an embedding
\[ \R^n \times S^m = \R^{n-1} \times \R \times S^m \hookrightarrow \R^{n+m}\]
which, in turn, has a canonical extension to an embedding
\[\R^{2n} \times T^*S^m = T^*\R^n \times T^*S^m \hookrightarrow T^*\R^{n+m}=\R^{2(n+m)}\]
preserving the Liouville forms. Using $0_{S^m} \subset T^*S^m$ to denote the zero-section, the fact that $\Lambda \times 0_{S^m}$ is a Legendrian submanifold of the contactisation of $\R^{2n} \times T^*S^m$ thus provides an embedding of $\Lambda \times S^m$ into the contactisation of $\R^{2(n+m)}$.
\begin{defn}
Suppose that we are given a Legendrian submanifold $\Lambda \subset (\R^{2n+1},\xi_{st})$. The above Legendrian embedding of $\Lambda \times S^m$ is called the \emph{$S^m$-spin of $\Lambda$} and will be denoted by $\Sigma_{S^m}\Lambda \subset (\R^{2(n+m)+1},\xi_{st})$.
\end{defn}

Observe that the symplectisation $(\R \times \R^{2n+1},d(e^t\alpha_0))$ of the standard contact vector space is symplectomorphic to $(\R^{2(n+1)},d\theta_{\R^{n+1}})$. For an exact Lagrangian cobordism $L$ from $\Lambda^-$ to $\Lambda^+$, the image of the exact Lagrangian submanifold $L \times 0_{S^m} \subset \R^{2(n+1)} \times T^*S^m$ under the above embedding can be seen as an exact symplectic cobordism from $\Sigma_{S^m}\Lambda^-$ to $\Sigma_{S^m}\Lambda^+$ inside the symplectisation of $(\R^{2(n+m)+1},\xi_{st})$.
\begin{defn}
Suppose that we are given an exact Lagrangian cobordism from $\Lambda^-$ to $\Lambda^+$ inside the symplectisation of $(\R^{2n+1},\xi_{st})$. The above exact Lagrangian cobordism from $\Sigma_{S^m}\Lambda^-$ to $\Sigma_{S^m}\Lambda^+$ diffeomorphic to $L \times S^m$ is called the \emph{$S^m$-spin of $L$} and will be denoted by $\Sigma_{S^m}L$.
\end{defn}
We refer to \cite{NoteOnFrontSpinning, OnHolRegFlexEndoc, EstimNumbrReebChordLinReprCharAlg} for more details and properties of this construction. Finally, observe that the $S^m$-front spinning construction can also be seen as a special case
of the Legendrian product construction of Lambert-Cole, see \cite{LegendrianProducts}.

\subsection{Bilinearised Legendrian contact cohomology}
\label{sec:bilin-legendr-cont}

Legendrian contact homology (LCH) is a modern Legendrian isotopy
invariant defined for Legendrian submanifolds of $(\R^3,\xi_{st})$
by Chekanov \cite{DiffAlg} in a combinatorial way,
and later generalised to the contactisation $P \times \R$ of a Liouville domain
$(P,d\theta)$ by Ekholm, Etnyre and Sullivan in \cite{ContHomP} using holomorphic curves. This invariant can
be seen as a part of the symplectic field theory  program,
which was proposed by Eliashberg, Givental and Hofer in
\cite{IntroSFT}.

Given a chord-generic Legendrian submanifold $\Lambda$, we associate
to it a unital non-commutative differential graded algebra $\mathcal
A(\Lambda)$ over $\mathbb Z_2$ freely generated by the set of Reeb
chords $\mathcal{Q}(\Lambda)$. This algebra is sometimes called the
Chekanov-Eliashberg algebra of $\Lambda$. The differential
$\partial_{\Lambda}$ is defined on the generators by the count
 \[ \partial_\Lambda(a)= \sum_{\mathbf{b}} \# (\mathcal{M}_{\R \times \Lambda}(a;\mathbf{b})/\R)\mathbf{b}\]
of pseudo-holomorphic discs with one positive asymptotics to the
Reeb chord $a$ and several negative asymptotics to the Reeb chords
$\mathbf{b}=(b_1,\ldots,b_k)$. Here the sum is taken over the one-dimensional components of the moduli spaces, where the $\R$-action is induced by translation, for some generic choice of cylindrical almost complex structure. See Sections \ref{sec:moduli}, \ref{sec:trans} below for the definitions of
these moduli spaces. The differential is then extended to all of
$\mathcal{A}(\Lambda)$ via the Leibniz rule and linearity.
For the details of this construction we refer to \cite{ContHomP}.

In order to extract finite-dimensional linear information out of
this DGA, Chekanov considered augmentations as bounding cochains for Legendrian contact homology.

\begin{defn}
Let $(\mathcal{A},\partial)$ be a  DGA over a unital commutative ring $\mathcal{R}$. An \textit{augmentation} of $\mathcal{A}$ is a unital DGA map
$$\varepsilon:(\mathcal{A},\partial)\rightarrow (\mathcal{R},0),$$
where all elements of $\mathcal{R}$ are concentrated in degree $0$.
\end{defn}

In other words, an augmentation $\varepsilon$ is a unital algebra map such
that
\begin{itemize}
\item $\varepsilon(a)=0$ if $|a|\not= 0$,
\item $\varepsilon\circ\partial =0$.
\end{itemize}

% In the case when $(\mathcal{A}(\Lambda),\partial_\Lambda)$ admits an augmentation, Chekanov's \emph{linearised Legendrian contact homology} allows us to extract useful information from the Chekanov-Eliashberg algebra in the form of a finite-dimensional complex. Recall that an \emph{augmentation} is a graded algebra map $\varepsilon: \mathcal A(\Lambda)\to \Z_{2}$ that satisfies $\varepsilon(1) = 1$ and $\varepsilon\circ \partial_{\Lambda} = 0$.

Given two augmentations of the Chekanov-Eliashberg algebra
$(\mathcal{A}(\Lambda),\partial)$, we can define the bilinearised
Legendrian contact cohomology complex, which is the
finite-dimensional $\Z_2$-vector space
$LCC^\bullet_{\varepsilon_0,\varepsilon_1}(\Lambda)$ with basis
$\mathcal{Q}(\Lambda)$, whose boundary is given by the count
\[ d_{\varepsilon_0,\varepsilon_1}(c)=\sum \#(\mathcal{M}_{\R\times\Lambda}(a;\mathbf{b}c\mathbf{d})/\R) \varepsilon_0(\mathbf{b})\varepsilon_1(\mathbf{d})a\]
of pseudo-holomorphic discs, where the sum is taken over the rigid
components (modulo translation) of the moduli spaces for some
generic choice of cylindrical almost complex structure. The homology
of this complex will be denoted by
$LCH^\bullet_{\varepsilon_0,\varepsilon_1}(\Lambda)$.

This cohomology theory is a generalisation of the original
Chekanov's linearised Legendrian contact cohomology, see
\cite{DiffAlg}. If $\varepsilon_0=\varepsilon_1$, then these two theories coincide.
The set of isomorphism classes of bilinearised Legendrian cohomologies is a Legendrian isotopy invariant \cite[Theorem 1.1]{Bilinearised}.

In the spirit of symplectic field theory, Ekholm has shown \cite{RationalSFT} that an exact Lagrangian cobordism $L$ from $\Lambda_-$ to $\Lambda_+$ together with the choice of a generic almost complex structure induces a DGA morphism
\[\Phi_L: (\mathcal A(\Lambda^+),\partial_{\Lambda^+})\to \mathcal (A(\Lambda^-),\partial_{\Lambda^-})\]
defined on generators by
\begin{equation}\label{eq: dga morphism}
 \Phi_L(a)= \sum \# \mathcal{M}_L(a;\mathbf{b})\mathbf{b},
\end{equation}
where the sum is taken over the rigid components of the moduli spaces for some generic choice of compatible almost complex structure. In particular, an exact Lagrangian filling induces an augmentation because the Chekanov-Eliashberg algebra associated to the empty set is the ground ring.
 Moreover, given two augmentations $\varepsilon_i$, $i=0,1$, of $(A(\Lambda^-),\partial_{\Lambda^-})$ there is an induced chain map
\[ \Phi^{\varepsilon_0,\varepsilon_1}_L \co LCC^\bullet_{\varepsilon_0,\varepsilon_1}(\Lambda^-) \to LCC^\bullet_{\varepsilon_0\circ \Phi_L,\varepsilon_1\circ \Phi_L}(\Lambda^+)\]
which is defined by
\[ \Phi^{\varepsilon_0,\varepsilon_1}_L(c)= \sum \# \mathcal{M}_L(a;\mathbf{b}c\mathbf{d}) \varepsilon_0(\mathbf{b})\varepsilon_1(\mathbf{d})a.\]

We now consider the case $\Lambda=\Lambda_0 \cup \Lambda_1$, and when $\varepsilon_i$ is an augmentation of $(\mathcal{A}(\Lambda_i),\partial_{\Lambda_i})$. It follows from the fact that $\partial_\Lambda$ counts pseudo-holomorphic \emph{discs} (which in particular are connected) that there is an induced augmentation $\varepsilon$ of $(\mathcal{A}(\Lambda),\partial_\Lambda)$ which on
the Reeb chord $c \in \mathcal{Q}(\Lambda_i)$ takes the value $\varepsilon_i(c)$ and  vanishes on the Reeb chords from $\Lambda_0$ to $\Lambda_1$
and from $\Lambda_1$ to $\Lambda_0$. For the same reason, the subset

\[  \mathcal{Q}(\Lambda_1,\Lambda_0) \subset \mathcal{Q}(\Lambda)\]
consisting of Reeb chords starting on $\Lambda_1$ and ending on $\Lambda_0$ spans a subcomplex of $LCC^\bullet_\epsilon(\Lambda)$. We will denote this subcomplex by
\[(LCC^\bullet_{\varepsilon_0,\varepsilon_1}(\Lambda_0,\Lambda_1),d_{\varepsilon_0,\varepsilon_1}).\]

\begin{Rem}\label{rem: augmentations are locally constant}
If $\Lambda_0,\Lambda_1 \subset P \times \R$ are sufficiently $C^1$-close, it follows from the invariance theorem in \cite{ContHomP} that
the canonical identification of the generators induces an isomorphism between the  Chekanov-Eliashberg algebras $({\mathcal A}(\Lambda_0), \partial_{\Lambda_0})$ and $({\mathcal A}(\Lambda_1), \partial_{\Lambda_1})$. In particular, there is a canonical bijective correspondence between the augmentations of $(\mathcal{A}(\Lambda_0),\partial_{\Lambda_0})$ and $(\mathcal{A}(\Lambda_1),\partial_{\Lambda_1})$ in this case.
\end{Rem}

Use $\phi^t \colon P \times \R \to P \times \R$ to denote the translation $z \mapsto z +t$, i.e.~the time-$t$ Reeb flow for the standard contact form.
\begin{Prop}
\label{prop:identification}
Consider the cylindrical lift $J$ of a fixed regular almost complex structure on $P$ (see Section \ref{sec:trans}). If  the Legendrian submanifold $\Lambda' \subset P \times \R$ is sufficiently $C^1$-close to $\phi^\epsilon(\Lambda) \subset P \times \R$, for each $0<\epsilon<\min_{c \in \mathcal{Q}(\Lambda)}\ell(c)$ there is a canonical isomorphism
\[(LCC^\bullet_{\varepsilon_0,\varepsilon_1}(\Lambda,\Lambda'),d_{\varepsilon_0,\varepsilon_1}) \simeq (LCC^\bullet_{\varepsilon_0,\varepsilon_1}(\Lambda),d_{\varepsilon_0,\varepsilon_1})\]
of complexes, where we have identified the augmentations of the Chekanov-Eliashberg algebras of $\Lambda$ and $\Lambda'$ by Remark \ref{rem: augmentations are locally constant}, and used $J$ in the definition of the differentials.
\end{Prop}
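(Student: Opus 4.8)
The plan is to identify the two complexes first as graded $\Z_2$-vector spaces, via an explicit bijection of Reeb chords, and then to match their differentials through a compactness-and-gluing argument in the limit $\epsilon\to 0$, relying on the regularity of the cylindrical lift $J$ from Section \ref{sec:trans}. To begin, I would analyse $\mathcal{Q}(\Lambda\sqcup\Lambda')$. Since $\Pi_{\operatorname{Lag}}(\phi^\epsilon(\Lambda))=\Pi_{\operatorname{Lag}}(\Lambda)$, the projection $\Pi_{\operatorname{Lag}}(\Lambda')$ is $C^1$-close to $\Pi_{\operatorname{Lag}}(\Lambda)$, and the constraint $0<\epsilon<\min_{c\in\mathcal{Q}(\Lambda)}\ell(c)$ ensures that over each double point of $\Pi_{\operatorname{Lag}}(\Lambda)$ coming from a chord $c\in\mathcal{Q}(\Lambda)$ there is precisely one Reeb chord from $\Lambda'$ to $\Lambda$, of length close to $\ell(c)-\epsilon>0$, while near the diagonal the only mixed Reeb chords run from $\Lambda$ to $\Lambda'$ and have length close to $\epsilon$. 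This produces a canonical bijection between $\mathcal{Q}(\Lambda',\Lambda)$ and $\mathcal{Q}(\Lambda)$, $c'\mapsto c$, which preserves gradings because the relevant Lagrangian-projection and capping-path data vary continuously under the $C^1$-small perturbation, once the grading of the mixed chords is fixed using the canonical identification of $\Lambda$ and $\Lambda'$ near the diagonal. Combined with Remark \ref{rem: augmentations are locally constant}, which identifies the augmentations of $\mathcal{A}(\Lambda)$ and $\mathcal{A}(\Lambda')$, this already yields the claimed isomorphism $LCC^\bullet_{\varepsilon_0,\varepsilon_1}(\Lambda,\Lambda')\simeq LCC^\bullet_{\varepsilon_0,\varepsilon_1}(\Lambda)$ of graded vector spaces.

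Next I would match the two differentials. At the combinatorial level, a disc counted by $d_{\varepsilon_0,\varepsilon_1}$ on $LCC^\bullet_{\varepsilon_0,\varepsilon_1}(\Lambda)$ has a positive puncture at $a$ and negative punctures at $b_1,\dots,b_k,c,d_1,\dots,d_l$ in cyclic order, with the $b_i$ weighted by $\varepsilon_0$ and the $d_j$ by $\varepsilon_1$; colouring the boundary arc through the $b_i$ as lying on $\Lambda$ and the arc through the $d_j$ as lying on $\Lambda'$ produces exactly the combinatorial type of disc counted by the mixed differential of $LCC^\bullet_{\varepsilon_0,\varepsilon_1}(\Lambda,\Lambda')$, namely a positive mixed puncture at $a'$, a negative mixed puncture at $c'$, pure $\Lambda$-punctures at the $b_i$ (weighted by $\varepsilon_0$), and pure $\Lambda'$-punctures at the $d_j$ (weighted by $\varepsilon_1$). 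To promote this into an equality of holomorphic counts, I would show that for $\epsilon$ fixed and small, and for $\Lambda'$ sufficiently $C^1$-close to $\phi^\epsilon(\Lambda)$, the corresponding $J$-holomorphic moduli spaces are in bijection. Since $J$ is the cylindrical lift of a regular almost complex structure on $P$, all rigid discs appearing in $d_{\varepsilon_0,\varepsilon_1}$ on $LCC^\bullet_{\varepsilon_0,\varepsilon_1}(\Lambda)$ are transversely cut out (Section \ref{sec:trans}), and by SFT compactness any sequence of rigid mixed discs with positive puncture over a chord of $\mathcal{Q}(\Lambda)$, taken along $\Lambda'\to\phi^\epsilon(\Lambda)$ and $\epsilon\to 0$, converges to a holomorphic building with boundary on $\R\times\Lambda$; the action--energy inequality together with $\epsilon<\min_c\ell(c)$ rules out any short diagonal chord from the limit and, by the $0$-dimensionality of the limiting moduli space, forces the building to consist of a single disc of the expected type. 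Conversely, a standard gluing/implicit-function-theorem argument deforms each such rigid disc for $\R\times\Lambda$ into a unique rigid mixed disc once $\epsilon$ and the $C^1$-distance are small enough. The $\varepsilon_i$-weights are carried along by the identification of augmentations, so $d_{\varepsilon_0,\varepsilon_1}$ agrees on the two complexes, completing the proof.

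The main obstacle is this last compactness-and-gluing step: one must rule out that the $\epsilon\to 0$ limit of a rigid mixed disc is a genuinely multi-level building, and dually that the deformation map hits every rigid mixed disc. The decisive inputs are the regularity of the cylindrical lift $J$ from Section \ref{sec:trans} --- which, via the fact that holomorphic discs for a cylindrical almost complex structure are governed by their projections to $P$, also provides the transversality needed in the two-copy picture --- and the quantitative bound $\epsilon<\min_c\ell(c)$, which via the action filtration keeps the mixed chords lying over $\mathcal{Q}(\Lambda)$ uniformly separated in action from the short diagonal chords and so prevents the latter from entering any limit configuration. One could alternatively deduce the proposition from the invariance theorem of Ekholm--Etnyre--Sullivan \cite{ContHomP} for $C^1$-small perturbations, but the route above makes the canonical nature of the isomorphism transparent.
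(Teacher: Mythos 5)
Your identification of the generators is fine: for $0<\epsilon<\min_{c}\ell(c)$ the chords of $\mathcal{Q}(\Lambda',\Lambda)$ are exactly the ``long'' chords of length close to $\ell(c)-\epsilon$ lying over the double points of $\Pi_{\operatorname{Lag}}(\Lambda)$, and the short diagonal chords run in the other direction, so $\mathcal{Q}(\Lambda',\Lambda)\simeq\mathcal{Q}(\Lambda)$ with matching gradings. The gap is in the step where you match the differentials. You propose to compare the rigid mixed discs with boundary on $\R\times(\Lambda\cup\Lambda')$ with rigid discs on $\R\times\Lambda$ by letting $\Lambda'\to\phi^\epsilon(\Lambda)$ \emph{and} $\epsilon\to 0$, invoking SFT compactness and ``standard'' gluing. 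The second limit is a collision of the two Lagrangian boundary conditions ($\R\times\phi^\epsilon(\Lambda)$ converges to $\R\times\Lambda$), and this degenerate situation is not covered by the standard Gromov--Hofer/SFT compactness theorem you cite: the limit configurations are discs on $\R\times\Lambda$ with boundary switching points, i.e.~with gradient flow line pieces attached, and controlling these is precisely the Morse--Bott two-copy analysis of \cite{DualityLeg}; the cobordism-level analogue is the conjectural \cite[Lemma 4.11]{RationalSFT2}, which the paper explicitly avoids relying on in Section \ref{sec:relat-counts-pseudo} (cf.~Lemma \ref{lem:breaking}). Your action estimate does not exclude such pieces, since the short chords have length tending to $0$ in your limit, and the converse gluing statement is equally non-standard here. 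Moreover, even if this degeneration argument were carried out, it would only prove the proposition for $\epsilon$ sufficiently small, whereas the statement is claimed for every $0<\epsilon<\min_{c\in\mathcal{Q}(\Lambda)}\ell(c)$.

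The mechanism that actually proves the statement --- and is what the paper's proof cites from \cite[Section 6.1.2]{Lifting}, resting on \cite{DualityLeg} --- is the projection/lifting property of the cylindrical lift $J$, which you mention only in passing as a source of transversality. Since $\Pi_{\operatorname{Lag}}(\phi^\epsilon(\Lambda))=\Pi_{\operatorname{Lag}}(\Lambda)$, every disc contributing to either differential projects to the \emph{same} $J_P$-holomorphic moduli problem in $P$: boundary on $\Pi_{\operatorname{Lag}}(\Lambda)$ and punctures at its double points. Conversely, each such projected disc lifts, uniquely up to translation, with either boundary decoration; the only condition for the lift with mixed asymptotics to exist is that the long mixed chords exist, i.e.~$\ell(c)-\epsilon>0$, which is exactly the hypothesis $\epsilon<\min_c\ell(c)$. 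Thus the identification of differentials holds on the nose for the exact push-off $\phi^\epsilon(\Lambda)$, for the whole stated range of $\epsilon$ and with no limiting procedure. The only compactness/transversality argument needed is the passage from $\phi^\epsilon(\Lambda)$ to a sufficiently $C^1$-close $\Lambda'$, which is a genuinely standard perturbation of a non-degenerate boundary condition (the two cylinders remain disjoint), as in the first of your two limits. Reorganising your proof around the projection argument repairs it; as written, the $\epsilon\to 0$ compactness-and-gluing step is unjustified.
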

\begin{proof}
This follows by the statements in \cite[Section 6.1.2]{Lifting} which, in turn, follow from the analysis done in \cite{DualityLeg}.
\end{proof}

\subsection{Wrapped Floer homology}\label{wfh}
Wrapped Floer homology is a version of Lagrangian intersection Floer homology
% due to Floer \cite{MorseTheoryLagr},
for certain non-compact Lagrangian submanifolds. It first appeared in \cite{AbbonFloer}, and different versions were later developed in \cite{OpenStringAnalogue}, \cite{OnWrapped}, \cite{SympGeomCot}, and \cite{RationalSFT2}. We will be using the set-up of the latter version, which is useful for establishing a connection between wrapped Floer homology and bilinearised Legendrian contact cohomology.

In the following we will let $L_0,L_1 \subset \R \times P \times \R$ be exact Lagrangian fillings of $\Lambda_0,\Lambda_1 \subset P \times \R$, respectively, which are assumed to intersect transversely, and hence in a finite set of double-points. We use $\varepsilon_i$ to denote the augmentation induced by $L_i$, $i=0,1$. The wrapped Floer homology complex is defined to be
\[ CW_\bullet(L_0,L_1)=CW^\infty_\bullet(L_0,L_1) \oplus  CW^0_\bullet(L_0,L_1) , \]
where
\begin{eqnarray*}
CW^0(L_0,L_1) & := & \Z_2 \langle L_0 \cap L_1 \rangle \\
CW^\infty_\bullet(L_0,L_1) & := & LCC^{\bullet-1}_{\varepsilon_0,\varepsilon_1}(\Lambda_0,\Lambda_1).
\end{eqnarray*}
We refer to \cite{RationalSFT2} and \cite{Lifting} for details regarding the grading, which depends on the choice of a Maslov potential. The differential is defined to be of the form
\[ d=\begin{pmatrix}
d_\infty & \delta \\
0 & d_0
\end{pmatrix}
\]
with respect to the above decomposition, where
\begin{eqnarray*}
d_\infty & := & d_{\varepsilon_0,\varepsilon_1},\\
d_0(x) & := & \sum \#\mathcal{M}_{L_0,L_1}(y;x) y,\\
\delta(x) & := & \sum \#\mathcal{M}_{L_0,L_1}(a;x) a.
\end{eqnarray*}
Here the above sums are taken over the rigid components of the moduli spaces, and $x,y \in L_0 \cap L_1$, while $a \in \mathcal{Q}(\Lambda_1,\Lambda_0)$.

It immediately follows that $CW^\infty(L_0,L_1) \subset (CW(L_0,L_1),d)$ is a subcomplex, whose corresponding quotient complex can be identified with the complex $(CW^0(L_0,L_1),d_0)$. For obvious reasons, we will denote the latter quotient complex by
\[ CF_\bullet(L_0,L_1):=CW^0_\bullet(L_0,L_1),\]
since the differential of this complex counts ordinary Floer strips.

In the current setting, we have the following invariance result
\begin{Prop}[Proposition 5.12 in \cite{Lifting}]
\label{prop:invariance}
If $L_1,L_2 \subset \R \times P \times \R$ are exact Lagrangian fillings inside the symplectisation of a contactisation, it follows that $(CW_\bullet(L_0,L_1),d)$ is an acyclic complex or, equivalently, that
\[ \delta \colon CF_\bullet(L_0,L_1) \to LCC^\bullet_{\varepsilon_0,\varepsilon_1}(\Lambda_0,\Lambda_1)\]
is a quasi-isomorphism, where $\varepsilon_i$ denotes the augmentation induced by $L_i$, $i=0,1$.
\end{Prop}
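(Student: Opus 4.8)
The plan is to prove acyclicity of $(CW_\bullet(L_0,L_1),d)$ by a standard Floer-theoretic argument: exhibit a Hamiltonian (or Reeb) isotopy that displaces the two fillings from each other near infinity and makes the displaced Lagrangians disjoint, so that the corresponding Floer complex is literally zero, and then invoke an invariance theorem to conclude that the original complex has the same (vanishing) homology. Concretely, since $L_0$ and $L_1$ are exact Lagrangian fillings, their negative ends are cylinders $(-\infty,-M]\times\Lambda_i$; one wraps $L_1$ by the positive Reeb flow $\phi^T$ for $T$ large. Geometrically, the relevant count of strips and of holomorphic discs with one positive puncture asymptotic to a Reeb chord in $\mathcal Q(\Lambda_1,\Lambda_0)$ is captured by the complex $CW_\bullet(L_0,L_1)$, and the claim is that the total homology vanishes because, after a sufficiently large wrapping, the cobordism $L_0$ and the wrapped copy of $L_1$ can be separated — intuitively, in the symplectisation both fillings become, after rescaling the $\R$-coordinate, graphs over their cylindrical ends that can be pushed apart using the Liouville flow, so no generators survive.

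The key steps, in order, would be: (1) Recall from \cite{RationalSFT2} and \cite{Lifting} that $CW_\bullet(L_0,L_1)$ is the Floer complex computing the wrapped Floer homology $HW_\bullet(L_0,L_1)$ in the sense of the given set-up, with the stated triangular differential; in particular the long exact sequence relating $CF_\bullet(L_0,L_1)$, the subcomplex $CW^\infty_\bullet(L_0,L_1)=LCC^{\bullet-1}_{\varepsilon_0,\varepsilon_1}(\Lambda_0,\Lambda_1)$, and the total complex shows that acyclicity of the total complex is equivalent to $\delta$ being a quasi-isomorphism. (2) Prove that $HW_\bullet(L_0,L_1)=0$. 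Here one uses that $L_0$ and $L_1$ are fillings: after applying the Liouville flow of the symplectisation to push $L_1$ into the region $\{t\le -M\}$ where both $L_0$ and $L_1$ are cylinders $\R\times\Lambda_i$ with $\Lambda_i$ fixed, and then using a large positive Reeb wrapping $\phi^T$, $T\gg0$, of $\Lambda_1$, the exactness of both fillings together with energy/action estimates forces every putative Floer trajectory (strip or disc) contributing to the wrapped differential to have unbounded action as $T\to\infty$; hence for $T$ large the complex is generated by nothing, i.e.\ it is zero. This is exactly the content invoked in \cite[Proposition 5.12]{Lifting} and I would cite the argument there, translating it to the present normalisation. (3) Combine (1) and (2): vanishing of the total homology of $(CW_\bullet(L_0,L_1),d)$ says precisely that the connecting map $\delta$ in the exact triangle, going from $CF_\bullet(L_0,L_1)$ to $LCC^\bullet_{\varepsilon_0,\varepsilon_1}(\Lambda_0,\Lambda_1)$ (note the grading shift absorbing the $\bullet-1$), is a quasi-isomorphism.

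A clean alternative, and probably the one I would actually write, is to simply quote \cite[Proposition 5.12]{Lifting} directly, since the statement is declared to be exactly that proposition; the proof then consists of checking that our set-up (the cylindrical lift $J$ of a regular almost complex structure on $P$, the augmentations induced by the fillings, the identification $CW^\infty_\bullet = LCC^{\bullet-1}_{\varepsilon_0,\varepsilon_1}(\Lambda_0,\Lambda_1)$) matches the hypotheses there verbatim, and noting that transversality of the moduli spaces appearing in $d_0$, $d_\infty$, and $\delta$ is guaranteed by the results recalled in Section~\ref{sec:trans} (in particular Lazzarini's injectivity used there) together with \cite{ContHomP}. The main obstacle — and the reason one cannot get away with a one-line citation without care — is bookkeeping: one must ensure the grading conventions, the Maslov-potential-dependent shifts, and the orientation/coefficient conventions ($\Z_2$ throughout, so signs are not an issue, but gradings are) agree with those of \cite{Lifting}, and that the compactness/gluing needed to see $CW^\infty_\bullet$ as a genuine subcomplex (so that the quotient is $CF_\bullet(L_0,L_1)$ with differential $d_0$) holds in this symplectisation-of-a-contactisation setting; this is where the connectedness of the discs counted by $\partial_\Lambda$ and the SFT-type compactness of \cite{ContHomP,RationalSFT} are used. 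Once those identifications are in place, acyclicity of the mapping cone of $\delta$ is the formal restatement that $\delta$ is a quasi-isomorphism, and the proposition follows.
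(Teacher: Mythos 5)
Your proposal is correct and matches the paper, which gives no independent argument at all: Proposition \ref{prop:invariance} is simply quoted as Proposition 5.12 of \cite{Lifting}, exactly as in your ``clean alternative''. Your sketch of the underlying displacement argument (wrapping by the Reeb translation in the $z$-direction until no generators remain, plus invariance) is consistent with the cited proof, so no further work is needed beyond the bookkeeping you already flag.
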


\subsection{Relations between wrapped Floer homology and bilinearised LCH}
\label{sec:relat-betw-wrapp}
In the case when $L$ is an exact Lagrangian filling of $\Lambda$, the DGA morphism $\Phi_L$ described in Equation \ref{eq: dga morphism} is an augmentation of the Chekanov-Eliashberg algebra of $\Lambda$ defined by counting elements in the moduli space $\mathcal{M}_L(c)$ for each Reeb chord $c$ on $\Lambda$.

In~\cite{RationalSFT2}, Ekholm outlined an isomorphism, first conjectured by Seidel, relating the linearised Legendrian contact cohomology and the singular homology of a filling. The details of this isomorphism were later worked out in \cite{Lifting}.
\begin{Thm}\label{malgthinstregbam}
Let $\Lambda\subset P\times\R$ be a Legendrian submanifold admitting an exact Lagrangian filling $L$. There is an isomorphism
\begin{align*}
H_{i}(L; \Z_{2})\simeq LCH_{\varepsilon}^{n-i}(\Lambda),
\end{align*}
where $\varepsilon$ is the augmentation induced by $L$.
 Here all the gradings are taken modulo the Maslov number of $L$.
\end{Thm}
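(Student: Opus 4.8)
The plan is to feed two transverse copies of the filling $L$ into the wrapped Floer complex of Section \ref{wfh}, and then to combine the two invariance propositions quoted there with the elementary Floer--Morse computation of the ordinary Floer part. So everything below reduces to (a) constructing a convenient pair of fillings, (b) applying Propositions \ref{prop:invariance} and \ref{prop:identification}, and (c) computing $H_\bullet(CF_\bullet(L_0,L_1))$ topologically.

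First I would fix a Morse function $f\co L \to \R$ that is $C^2$-small on the compact part $\overline{L}$ and strictly increasing along the cylindrical end $[M,+\infty)\times\Lambda$, so that $f$ has finitely many critical points, all lying in $\overline{L}$. I would set $L_0:=L$ and take $L_1$ to be the exact Lagrangian filling obtained from $L_0$ by a Hamiltonian push-off which, along the cylindrical end, is a Reeb translation by a parameter $0<\epsilon<\min_{c\in\mathcal{Q}(\Lambda)}\ell(c)$ followed by a $C^1$-small generic perturbation, and which, over $\overline{L}$, is the graphical perturbation determined by $df$. Then $L_0\pitchfork L_1$, the set $L_0\cap L_1$ is exactly the set of critical points of $f$, and the Legendrian end $\Lambda_1$ of $L_1$ is $C^1$-close to $\phi^\epsilon(\Lambda)$. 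Since $L_1$ is obtained from $L$ by a Hamiltonian deformation whose restriction to the cylindrical end is a Reeb translation followed by a small Legendrian perturbation, the augmentation it induces coincides, under the canonical identification of the Chekanov--Eliashberg algebras of $\Lambda$, $\phi^\epsilon(\Lambda)$ and $\Lambda_1$ (Remark \ref{rem: augmentations are locally constant}), with the augmentation $\varepsilon$ induced by $L_0=L$; I would want to spell out this invariance of the induced augmentation carefully.

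Next I would apply the two input propositions. By Proposition \ref{prop:invariance} the complex $(CW_\bullet(L_0,L_1),d)$ is acyclic, hence the connecting map
\[ \delta\co CF_\bullet(L_0,L_1) \longrightarrow LCC^\bullet_{\varepsilon,\varepsilon}(\Lambda_0,\Lambda_1) \]
is a quasi-isomorphism, so $H_\bullet(CF_\bullet(L_0,L_1),d_0)\cong LCH^\bullet_{\varepsilon,\varepsilon}(\Lambda_0,\Lambda_1)$. Using throughout the cylindrical lift $J$ of a fixed regular almost complex structure on $P$, Proposition \ref{prop:identification} identifies this canonically with $LCH^\bullet_{\varepsilon,\varepsilon}(\Lambda)$, which is the linearised cohomology $LCH^\bullet_\varepsilon(\Lambda)$ since $\varepsilon_0=\varepsilon_1$. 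It then remains to read $H_\bullet(CF_\bullet(L_0,L_1))$ off the topology of $L$: since $L_1$ is a $C^2$-small graphical perturbation of $L_0=L$ in the interior and its end is merely a small Reeb translate of that of $L_0$, an action/energy estimate at the cylindrical end (in the spirit of the set-up of \cite{Lifting}) rules out Floer strips between interior intersection points escaping to infinity, so for a suitably adapted almost complex structure $(CF_\bullet(L_0,L_1),d_0)$ is isomorphic to a Morse complex of $f$ on $L$; with the grading conventions fixed by the Maslov potential a critical point of Morse index $k$ lies in degree $n-k$, whence $H_\bullet(CF_\bullet(L_0,L_1))\cong H_{n-\bullet}(L;\Z_2)$.

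Putting the three identifications together gives
\[ H_i(L;\Z_2)\;\cong\;H_{n-i}\big(CF_\bullet(L_0,L_1)\big)\;\cong\;LCH^{n-i}_{\varepsilon,\varepsilon}(\Lambda_0,\Lambda_1)\;\cong\;LCH^{n-i}_\varepsilon(\Lambda), \]
with all gradings read modulo $\mu_L$, the Maslov potentials used throughout being compatible. I expect the main obstacle to be precisely this last topological step: arranging the push-off $L_1$ so that it is simultaneously $C^1$-close to a Reeb translate of $L$ near the end (so that Proposition \ref{prop:identification} applies), a $C^2$-small graphical perturbation in the interior, and compatible with a regular almost complex structure for which $d_0$ genuinely counts only Morse flow lines — that is, verifying on the nose that $CF_\bullet(L_0,L_1)$ computes $H_{n-\bullet}(L;\Z_2)$ and that the gradings line up with the required shift modulo $\mu_L$.
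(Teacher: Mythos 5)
Your proposal is correct and takes essentially the same route as the paper: there the statement is deduced from Theorem \ref{thm:isom} (i.e.\ Proposition \ref{prop:invariance} combined with Theorem \ref{thm:pushoff} and Proposition \ref{prop:identification}) together with the standard Floer--Morse identification of $HF_\bullet(L,\phi^\epsilon_{e^t}(L))$ with $H_\bullet(L;\Z_2)$, which is exactly your argument with $L_0=L_1=L$. The point you flag but leave open --- that the pushed-off copy induces the same augmentation and that the resulting two-copy differential is the bilinearised one --- is precisely what Theorem \ref{thm:pushoff} supplies in the paper.
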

The above theorem follows from the following basic result, together with a standard computation. Recall that the Hamiltonian flow $\phi^\epsilon_{e^t}$ is simply a translation of the $z$-coordinate by $\epsilon$.
\begin{Thm}[Theorem 4.2 in \cite{Bilinearised}]
\label{thm:isom}
Let $\Lambda\subset P\times\R$ be a Legendrian submanifold admitting exact Lagrangian fillings $L_i$ inducing the augmentations $\varepsilon_i$ of $(\mathcal{A}(\Lambda),\partial_\Lambda)$, $i=0,1$. For each sufficiently small $\epsilon>0$ and an appropriate choice of compatible almost complex structure there is an isomorphism
\begin{align*}
HF_\bullet (L_0,\phi^\epsilon_{e^t}(L_1)) \simeq LCH_{\varepsilon_0,\varepsilon_1}^\bullet (\Lambda).
\end{align*}
\end{Thm}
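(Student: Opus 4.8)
The plan is to realise $HF_\bullet(L_0,\phi^\epsilon_{e^t}(L_1))$ as the homology of the ``ordinary Floer'' summand of the wrapped Floer complex attached to the pair of fillings $L_0$ and $\phi^\epsilon_{e^t}(L_1)$, and then to recognise the ``Reeb chords at infinity'' summand of that complex as the bilinearised complex of $\Lambda$ by means of Proposition \ref{prop:identification}. First I set things up. As $\phi^\epsilon_{e^t}$ is the translation $z\mapsto z+\epsilon$, it is an exact symplectomorphism of the symplectisation, so $\phi^\epsilon_{e^t}(L_1)$ is an exact Lagrangian filling whose positive end is $\phi^\epsilon(\Lambda)$. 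To compute $HF_\bullet(L_0,\phi^\epsilon_{e^t}(L_1))$ I may replace $\phi^\epsilon_{e^t}(L_1)$ by a $C^1$-small perturbation that is transverse to $L_0$ and whose cylindrical end is a generic (Morse-type) perturbation $\Lambda'$ of $\phi^\epsilon(\Lambda)$, chord-generic relative to $\Lambda$; by invariance of exact Lagrangian Floer homology this changes nothing, and by Remark \ref{rem: augmentations are locally constant} (together with the fact that translation is a contactomorphism) the resulting filling still induces the augmentation $\varepsilon_1$ under the canonical identification of Chekanov--Eliashberg algebras. The intersection $L_0\cap\phi^\epsilon_{e^t}(L_1)$ is then a finite set contained in the compact part, the cylindrical ends over $\Lambda$ and over $\phi^\epsilon(\Lambda)$ being disjoint; and the hypothesis $0<\epsilon<\min_{c\in\mathcal{Q}(\Lambda)}\ell(c)$ guarantees that the Reeb chords from $\Lambda'$ to $\Lambda$ are exactly the ``long'' ones, one slightly shortened copy of each chord of $\Lambda$, with no short chords created in that direction.

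Next, using a regular compatible almost complex structure that is the cylindrical lift of a regular almost complex structure on $P$ near the positive end (as required by Propositions \ref{prop:identification} and \ref{prop:invariance}), I form the wrapped Floer complex $CW_\bullet(L_0,\phi^\epsilon_{e^t}(L_1))$ of Section \ref{wfh}, so that $CW^0_\bullet=CF_\bullet(L_0,\phi^\epsilon_{e^t}(L_1))$ and $CW^\infty_\bullet=LCC^{\bullet-1}_{\varepsilon_0,\varepsilon_1}(\Lambda,\Lambda')$. Proposition \ref{prop:invariance} applies to this pair of fillings and gives that
\[ \delta\colon CF_\bullet(L_0,\phi^\epsilon_{e^t}(L_1))\longrightarrow LCC^\bullet_{\varepsilon_0,\varepsilon_1}(\Lambda,\Lambda') \]
is a quasi-isomorphism, whence $HF_\bullet(L_0,\phi^\epsilon_{e^t}(L_1))\simeq LCH^\bullet_{\varepsilon_0,\varepsilon_1}(\Lambda,\Lambda')$. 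Finally, since $\Lambda'$ is $C^1$-close to $\phi^\epsilon(\Lambda)$ and $0<\epsilon<\min_c\ell(c)$, Proposition \ref{prop:identification} supplies a canonical isomorphism of complexes
\[ \bigl(LCC^\bullet_{\varepsilon_0,\varepsilon_1}(\Lambda,\Lambda'),d_{\varepsilon_0,\varepsilon_1}\bigr)\ \simeq\ \bigl(LCC^\bullet_{\varepsilon_0,\varepsilon_1}(\Lambda),d_{\varepsilon_0,\varepsilon_1}\bigr), \]
and composing the two isomorphisms yields $HF_\bullet(L_0,\phi^\epsilon_{e^t}(L_1))\simeq LCH^\bullet_{\varepsilon_0,\varepsilon_1}(\Lambda)$, with gradings normalised by the Maslov potential as in \cite{Lifting}.

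With Propositions \ref{prop:invariance} and \ref{prop:identification} in hand this argument is essentially formal, so the real work is concentrated in those two inputs, and I expect the main obstacle to be the matching of \emph{differentials} underlying Proposition \ref{prop:identification} rather than of generators. That matching is a Bott--Morse / neck-stretching degeneration: as $\Lambda'\to\phi^\epsilon(\Lambda)$ and $\epsilon\to0$, a rigid holomorphic strip with boundary on $\R\times\Lambda$ and $\R\times\Lambda'$ and both ends at long chords breaks into a rigid pseudo-holomorphic disc with boundary on $\R\times\Lambda$ carrying one positive and several negative punctures, the negative punctures lying on one or the other side of the thin strip region and hence being weighted by $\varepsilon_0$ and $\varepsilon_1$ respectively, which reproduces exactly the coefficients $\varepsilon_0(\mathbf{b})\varepsilon_1(\mathbf{d})$ in $d_{\varepsilon_0,\varepsilon_1}$, while no Morse trajectories can appear precisely because there are no short chords from $\Lambda'$ to $\Lambda$. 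Making this rigorous requires SFT compactness together with a gluing theorem, carried out in \cite{DualityLeg} and \cite{Lifting}; the acyclicity statement in Proposition \ref{prop:invariance} rests on a companion action-filtration and neck-stretching argument, and one further has to keep track of the Maslov-potential gradings through the shift $CW^\infty_\bullet=LCC^{\bullet-1}_{\varepsilon_0,\varepsilon_1}$ in order to conclude that the final isomorphism is degree-preserving as stated.
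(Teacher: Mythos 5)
Your argument is essentially the paper's own proof: it combines the acyclicity of the wrapped complex (Proposition \ref{prop:invariance}) to identify $HF_\bullet(L_0,\phi^\epsilon_{e^t}(L_1))$ with $LCH^\bullet_{\varepsilon_0,\varepsilon_1}(\Lambda,\phi^\epsilon(\Lambda))$, and then the push-off identification (Proposition \ref{prop:identification}, with Theorem \ref{thm:pushoff} guaranteeing that the translated/perturbed filling still induces $\varepsilon_1$ for the fixed almost complex structure) to pass to $LCH^\bullet_{\varepsilon_0,\varepsilon_1}(\Lambda)$. The only cosmetic difference is that where you invoke Remark \ref{rem: augmentations are locally constant} and general Floer invariance for the perturbation step, the paper cites Theorem \ref{thm:pushoff}, which is the precise statement covering that point.
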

\begin{proof}
The invariance in Proposition \ref{prop:invariance} shows that
\[HF_\bullet (L_0,\phi^\epsilon_{e^t}(L_1) \simeq LCH_{\varepsilon_0,\varepsilon_1}^\bullet (\Lambda,\phi^\epsilon(\Lambda)).\]
Choosing an almost complex structure appropriately, we can apply Theorem \ref{thm:pushoff} and Proposition \ref{prop:identification} to obtain the equality
\[ LCH_{\varepsilon_0,\varepsilon_1}^\bullet (\Lambda,\phi^\epsilon(\Lambda))=LCH_{\varepsilon_0,\varepsilon_1}^\bullet (\Lambda),\]
given that $\epsilon>0$ is sufficiently small.
\end{proof}

The bilinearised Legendrian contact homology of the $S^m$-spin $\Sigma_{S^m}\Lambda$ induced by a pair $\Sigma_{S^m}L_0$, $\Sigma_{S^m}L_1$ of $S^m$-spins of fillings can be computed using the following K\"unneth-type formula.
\begin{Thm}\label{kunnethformulaspinbilinearised}
Let $\varepsilon_i$ be the augmentation of
$(\mathcal{A}(\Lambda),\partial_\Lambda)$ induced by an exact Lagrangian filling $L_i$ and let
$\widetilde{\varepsilon_i}$ be the augmentation of
$(\mathcal{A}(\Sigma_{S^m}\Lambda),\partial_{\Sigma_{S^m}\Lambda})$
induced by the exact Lagrangian filling $\Sigma_{S^m}L_i$, $i=0,1$. There is an isomorphism of
graded $\Z_2$-vector spaces
\[(LCH^\bullet_{\widetilde{\varepsilon_0},\widetilde{\varepsilon_1}}(\Sigma_{S^m}\Lambda)) \simeq (LCH^\bullet_{\varepsilon_0,\varepsilon_1}(\Lambda))\otimes (H_\bullet(S^m;\Z_2)). \]
\end{Thm}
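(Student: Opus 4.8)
The plan is to deduce the formula from the Floer-theoretic description of bilinearised Legendrian contact cohomology in Theorem~\ref{thm:isom}, which turns the statement into a K\"unneth formula for Lagrangian intersection Floer homology together with the computation of the self-Floer homology of the zero section $0_{S^m}\subset T^*S^m$. First I would apply Theorem~\ref{thm:isom} to both sides: for all sufficiently small $\epsilon>0$ and suitable compatible almost complex structures one has
\[ LCH^\bullet_{\varepsilon_0,\varepsilon_1}(\Lambda)\simeq HF_\bullet(L_0,\phi^\epsilon_{e^t}(L_1)), \qquad LCH^\bullet_{\widetilde{\varepsilon_0},\widetilde{\varepsilon_1}}(\Sigma_{S^m}\Lambda)\simeq HF_\bullet(\Sigma_{S^m}L_0,\phi^\epsilon_{e^t}(\Sigma_{S^m}L_1)). \]
Thus it suffices to prove $HF_\bullet(\Sigma_{S^m}L_0,\phi^\epsilon_{e^t}(\Sigma_{S^m}L_1))\simeq HF_\bullet(L_0,\phi^\epsilon_{e^t}(L_1))\otimes H_\bullet(S^m;\Z_2)$, and this route has the advantage that the augmentation $\widetilde{\varepsilon_i}$ enters only through the filling $\Sigma_{S^m}L_i$, so no separate identification of augmentations is needed.

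\textbf{Recognising the product and perturbing.} Under the symplectic embedding $\R^{2(n+1)}\times T^*S^m\hookrightarrow(\R\times\R^{2(n+m)+1},d(e^t\alpha_0))$ defining the $S^m$-spin, $\Sigma_{S^m}L_i$ is the image of the split Lagrangian $L_i\times 0_{S^m}$, and the wrapping $\phi^\epsilon_{e^t}$ is supported in the $\R^{2n+1}$-directions and fixes the $T^*S^m$-factor; hence $\phi^\epsilon_{e^t}(\Sigma_{S^m}L_1)$ corresponds to $\phi^\epsilon_{e^t}(L_1)\times 0_{S^m}$, which still coincides with $\Sigma_{S^m}L_0$ along the whole zero section. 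I would resolve this clean, Morse--Bott intersection either by perturbing the second Lagrangian with the time-one flow of a $C^2$-small Hamiltonian on $T^*S^m$ lifting a Morse function $g\co S^m\to\R$, or equivalently by running the Morse--Bott Floer formalism directly; in both cases $CF_\bullet(0_{S^m},0_{S^m})$ reduces to the Morse complex $C^{\mathrm{Morse}}_\bullet(S^m,g)$, whose homology is $H_\bullet(S^m;\Z_2)$.

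\textbf{K\"unneth splitting.} Picking a split compatible almost complex structure $J_{\R^{n+1}}\times J_{S^m}$, with $J_{\R^{n+1}}$ the one from Theorem~\ref{thm:isom} and $J_{S^m}$ generic on $T^*S^m$, one shows that every rigid Floer strip for $\Sigma_{S^m}L_0$ and the perturbed $\Sigma_{S^m}L_1$ splits as a product of a strip for $(L_0,\phi^\epsilon_{e^t}(L_1))$ and a strip for $(0_{S^m},\phi_g(0_{S^m}))$: both coordinate projections of such a strip are holomorphic with matching boundary conditions and asymptotics, the energies add, and rigidity of the two factors forces the product form. This gives an isomorphism of chain complexes $CF_\bullet(\Sigma_{S^m}L_0,\cdot)\cong CF_\bullet(L_0,\phi^\epsilon_{e^t}(L_1))\otimes C^{\mathrm{Morse}}_\bullet(S^m,g)$, and passing to homology over the field $\Z_2$ (so there is no correction term) together with Theorem~\ref{thm:isom} for $\Lambda$ yields the stated isomorphism. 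The grading is respected because front spinning leaves the rotation number, hence the Maslov number, unchanged, and because the Maslov potential on $\Sigma_{S^m}L_i$ restricts to a shift by the $S^m$-index on each Morse--Bott family, which accounts for the grading contribution of $H_\bullet(S^m;\Z_2)$.

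\textbf{Main obstacle.} The delicate point is the splitting in the previous paragraph. One must first prove compactness of the relevant moduli spaces at the conical ends, refining the monotonicity/maximum-principle argument behind Theorem~\ref{thm:isom} to rule out strips escaping in the $\R$-direction; then, since split almost complex structures are generically not regular, one must argue transversality by hand --- here the $S^m$-factor strips are rigid Morse trajectories, which are already regular, and the $\R^{n+1}$-factor uses the regular $J$ of Theorem~\ref{thm:isom}, so the split moduli problem is transversely cut out, and the only remaining content is excluding \emph{non-split} strips, which the projection argument above handles once compactness is in place. One also has to check that the wrapping size $\epsilon$ and the Morse perturbation on $S^m$ can be chosen compatibly with the hypotheses of Theorem~\ref{thm:isom} for both $\Lambda$ and $\Sigma_{S^m}\Lambda$, which is straightforward since the spin has the same Reeb chord lengths as $\Lambda$. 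An essentially equivalent route avoids Floer theory altogether and argues directly on the Chekanov--Eliashberg algebra, where $\Sigma_{S^m}\Lambda$ carries a Morse--Bott family $\{c\}\times S^m$ of Reeb chords for each chord $c$ of $\Lambda$; perturbing by a Morse function identifies $\mathcal{Q}(\Sigma_{S^m}\Lambda)$ with $\mathcal{Q}(\Lambda)\times\operatorname{Crit}(g)$, and one shows the bilinearised differential and the augmentations $\widetilde{\varepsilon_i}$ split as tensor products, exactly as in the linearised computations of \cite{NoteOnFrontSpinning,EstimNumbrReebChordLinReprCharAlg}; the same Morse--Bott gluing is again the crux.
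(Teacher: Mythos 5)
Your proposal is correct and follows essentially the same route as the paper: translate both sides into Lagrangian Floer homology of the fillings via Theorem \ref{thm:isom}, identify $\Sigma_{S^m}L_i$ with the product Lagrangian $L_i\times 0_{S^m}$ in a split neighbourhood, apply a K\"unneth splitting, and compute $HF_\bullet(0_{S^m},0_{S^m})\simeq H_\bullet(S^m;\Z_2)$. The only difference is that the paper simply cites the K\"unneth-type formula for Lagrangian Floer homology from the literature, whereas you sketch its proof (Morse perturbation of the clean intersection, split almost complex structure, splitting of rigid strips); this is a reasonable unpacking of the same key step rather than a different argument.
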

\begin{proof}
Use the K\"unneth-type formula for Lagrangian Floer homology, see e.g.~\cite{LiKunneth} or \cite[Section 2.6]{ReductionSymplectic}, together with the isomorphism in Theorem \ref{thm:isom}. Namely, $\Sigma_{S^m}L_i \subset (\R^{2(n+m+1)}=\R^{2(n+1)}\times\R^{2m},d\theta_0)$ has a neighbourhood symplectomorphic to $\R^{2(n+1)} \times \mathcal{N}_{0_{S^m}}T^*S^m$, where $\mathcal{N}_{0_{S^m}}T^*S^m$ is a neighbourhood of the zero section $0_{S^m}$ and $\Sigma_{S^m}L_i$ moreover is identified with $L_i \times 0_{S^m}$, $i=0,1$. The K\"{u}nneth-type formula now gives
$$(HF_\bullet(\Sigma_{S^m}L_0,\Sigma_{S^m}L_1)) \simeq (HF_\bullet(L_0,L_1)) \otimes (HF_\bullet(0_{S^m},0_{S^m})),$$
where the latter factor is isomorphic to $H_\bullet(S^m;\Z_2)$ by a standard computation.
\end{proof}

Finally, observe that this K\"unneth-type formula is analogous to
the version for generating family homology proved in
\cite[Proposition
5.4]{FamiliesLegendrianSubmanifoldsGeneratingFamilies} for spins of
Legendrian manifolds admitting generating families.

\subsection{Pseudo-holomorphic discs with boundary on a Lagrangian cobordism}
\label{sec:moduli}

In this section we describe the moduli spaces of pseudo-holomorphic discs involved it the construction of Legendrian contact homology. Recall that a compatible almost complex structure $J$ on the symplectisation $(\R \times Y,d(e^t\alpha))$ of $(Y,\alpha)$ is \emph{cylindrical} if
\begin{itemize}
\item $J$ is invariant under translations of the $t$-coordinate;
\item $J\partial_t=R_\alpha$; and
\item $J(\ker(\alpha))=\ker(\alpha)$.
\end{itemize}
In the following we let $L \subset \R \times Y$ be an exact Lagrangian cobordism from $\Lambda^-$ to $\Lambda^+$ inside the symplectisation of $(Y,\alpha)$, where $L$ is assumed to be cylindrical outside of $[-M,M] \times Y$. Also, we let $J$ be a compatible almost complex structure on $\R \times Y$ which is cylindrical outside of a compact subset of $[-M,M] \times Y$.

The so-called \emph{Hofer-Energy} of a map $u \colon (\Sigma,\partial \Sigma) \to (\R \times Y,L)$ from a Riemann surface with boundary is defined as
\[ E_H(u) := \sup_{\varphi \in \mathcal{C}} \int_\Sigma u^*d(\varphi(t) \alpha),\]
where $\mathcal{C}$ is the set consisting of smooth functions $\varphi \colon \R \to [0,2e^M]$ satisfying $\varphi(t)=e^t$ for $t \in [-M,M]$, and $\varphi'(t)\ge 0$. Observe that the Hofer-Energy is non-negative whenever $u$ is $J$-holomorphic, i.e.~satisfies $J \circ du=du \circ i$, for an almost complex structure $J$ of the above form.

Consider the piecewise smooth function $\overline{\varphi} \colon \R \to [0,e^M]$ which satisfies $\overline{\varphi}(t)=e^{-M}$ for $t \le -M$,  $\overline{\varphi}(t)=e^t$ for $t \in [-M,M]$, while  ${\overline\varphi}(t)=e^M$ for $t \ge M$. We define the \emph{$d\alpha$-energy} of $u$ by
\[E(u):=\int_{\Sigma} u^* d(\overline{\varphi} \alpha).\]

We will study $J$-holomorphic discs $u\colon (D^2,\partial D^2) \to (\R \times Y, L)$, where the map $u$ is defined outside of a finite set of boundary points, usually called the \emph{(boundary) punctures}, and required to have finite Hofer energy.
At a puncture we require that either $t \circ u \to +\infty$, in which case we call the puncture \emph{positive}, or $t \circ u \to -\infty$, in which case we call the puncture \emph{negative}. The finiteness of the Hofer energy implies that $u$ is asymptotic to cylinders over Reeb chords at its boundary punctures. This fact follows by the same arguments as the analogous statement in the case when the boundary is empty and all punctures are internal, which was proven in \cite{PropPseudI}.

Let $a$ a Reeb chord on $\Lambda^+$ and $\mathbf{b}=b_1\cdot \hdots \cdot b_m$ a word of Reeb chords on $\Lambda^-$. We use
\[\mathcal{M}^J_L(a;\mathbf{b})\]
to denote the moduli space consisting of $J$-holomorphic discs $u$ as above that moreover satisfy the properties that:
\begin{itemize}
\item $u$ has a unique positive puncture $p_0 \in \partial D^2$, at which it is asymptotic to a cylinder over $a$; and
\item $u$ has $m$ negative punctures, and at the $i$-th negative puncture on the oriented boundary arc $\partial D^2 \setminus \{p_0\}$ it is asymptotic to a cylinder over $b_i$.
\end{itemize}

 Choose a primitive $f \colon L \to \R$ of $\overline{\varphi}\alpha |_{TL}$ which is constant on the negative end. Such a primitive exists by the definition, since $L$ is an exact Lagrangian cobordism of the form $e^t \alpha$, and $\overline{\varphi}\alpha|_{TL} = e^t\alpha|_{TL}$ because $\alpha$ vanishes on $L \cap \{|t|\ge M\}$,where
$L$ is cylindrical. For a Reeb chord $c$, we define
\[\ell(c):= \int_c \alpha >0.\]
For $a\in \mathcal{Q}(\Lambda^+)$ and $b \in \mathcal{Q}(\Lambda^-)$, we now write
\begin{eqnarray*}
\mathfrak{a}(b) & := &e^{-M}\ell(b),\\
\mathfrak{a}(a) & := & e^M\ell(a)+f(a_s)-f(a_e),
\end{eqnarray*}
where $a_s,a_e \in \Lambda^+$ denote the starting and the ending points of the Reeb chord $a$, respectively.

A standard computation utilising Stoke's theorem and the assumption that $L$ is exact shows that
\begin{Prop}
The $d\alpha$-energy of $u \in \mathcal{M}^J_L(a;\mathbf{b})$, where $J$ satisfies the above properties, is given by
\begin{equation}
\label{eq:energy}
0 \le E(u)=\mathfrak{a}(a)-(\mathfrak{a}(b_1) + \hdots + \mathfrak{a}(b_m)) \le E_H(u)
\end{equation}
where, in the case $M>0$, $E(u)=0$ if and only if $u$ is constant. In particular, $u$ must have a positive puncture unless it is constant.
\end{Prop}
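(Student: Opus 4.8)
The plan is to compute the $d\alpha$-energy $E(u)$ directly by Stokes' theorem, using the exactness data on $L$, and then bound it above by the Hofer energy $E_H(u)$. First I would fix a $J$-holomorphic disc $u \in \mathcal{M}^J_L(a;\mathbf{b})$ and let $D_\rho \subset D^2$ denote the domain obtained by removing small disjoint half-disc neighbourhoods of each puncture. Since $d(\overline{\varphi}\alpha)$ is exact, we have $\int_{D_\rho} u^*d(\overline{\varphi}\alpha) = \int_{\partial D_\rho} u^*(\overline{\varphi}\alpha)$. The boundary $\partial D_\rho$ decomposes into two types of pieces: the arcs mapping into $L$, and the small arcs around the punctures. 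On the arcs into $L$, we use that $\overline{\varphi}\alpha|_{TL}$ has the chosen primitive $f$; its contribution telescopes to a sum of differences $f(\text{one endpoint}) - f(\text{other endpoint})$, where the endpoints are the limits along $L$ of the asymptotic Reeb chords. On the arcs around the punctures, the standard asymptotic analysis (finiteness of Hofer energy forces convergence to trivial cylinders over Reeb chords, as recalled from \cite{PropPseudI}) shows that $\int u^*(\overline{\varphi}\alpha)$ over the arc at a positive puncture asymptotic to $a$ tends to $-e^M\ell(a)$ (with $\overline{\varphi} \to e^M$ at the positive end), while at a negative puncture asymptotic to $b_i$ it tends to $+e^{-M}\ell(b_i)$ (with $\overline{\varphi} \to e^{-M}$), the signs being dictated by the orientation of $\partial D^2$ and the sign convention that $\int_c \alpha > 0$.

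Next I would assemble these pieces. The primitive-$f$ terms along $L$ combine with the puncture terms: at the positive puncture the relevant endpoints on $L$ are $a_s$ and $a_e$, producing $f(a_s) - f(a_e)$; at each negative puncture $f$ is constant (it was chosen constant on the negative cylindrical end, where all the $b_i$ limit), so those terms cancel in pairs. Collecting everything yields
\[
E(u) = \left(e^M\ell(a) + f(a_s) - f(a_e)\right) - \sum_{i=1}^m e^{-M}\ell(b_i) = \mathfrak{a}(a) - \big(\mathfrak{a}(b_1) + \dots + \mathfrak{a}(b_m)\big),
\]
which is the middle equality in \eqref{eq:energy}. Taking the limit $\rho \to 0$ is justified by the finite-energy asymptotics. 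Non-negativity $E(u) \ge 0$ is then immediate since $d(\overline{\varphi}\alpha)$ is a (weakly) $J$-positive form on the cylindrical regions and $u$ is $J$-holomorphic: pointwise $u^*d(\overline{\varphi}\alpha) \ge 0$ because $\overline{\varphi}' \ge 0$ and $d\alpha$ is compatible with $J$ on $\ker\alpha$. For the inequality $E(u) \le E_H(u)$, I would observe that $\overline{\varphi}$ lies in (the closure of) the class $\mathcal{C}$ used to define $E_H$ — up to the harmless rescaling by the factor $2$ in the target interval — so $\int_\Sigma u^*d(\overline{\varphi}\alpha)$ is dominated by the supremum defining $E_H(u)$; more carefully, one picks a sequence $\varphi_k \in \mathcal{C}$ with $\varphi_k \nearrow \overline{\varphi}$ pointwise and applies monotone convergence to the non-negative integrands $u^*d(\varphi_k\alpha)$.

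Finally, for the equality case: when $M > 0$, if $E(u) = 0$ then $u^*d(\overline{\varphi}\alpha) \equiv 0$, and since $d(\overline{\varphi}\alpha) = d\overline{\varphi}\wedge\alpha + \overline{\varphi}\,d\alpha$ restricts, on the region $\{|t| \le M\}$ where $\overline{\varphi} = e^t > 0$, to a form that is strictly $J$-positive on the contact distribution and also controls the $dt\wedge\alpha$ component, vanishing of the integrand forces $du$ to have image in $\R\partial_t \oplus \R R_\alpha$ there and in fact, combined with $J$-holomorphicity and the asymptotics, forces $u$ to be contained in the cylindrical part and to be a trivial cylinder — but a disc (genus zero, one boundary) cannot be a nonconstant trivial strip over a Reeb chord unless it degenerates, so $u$ is constant. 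The last sentence, "$u$ must have a positive puncture unless it is constant," then follows because a nonconstant $u$ has $E(u) > 0$, hence $\mathfrak{a}(a)$ must appear on the right-hand side, i.e.\ there is a positive puncture. The main obstacle is making the asymptotic boundary-integral computation at the punctures rigorous with the correct signs — this is where the orientation conventions on $\partial D^2$, the positive-versus-negative puncture distinction, and the values of $\overline{\varphi}$ at $t = \pm\infty$ must all be tracked consistently; everything else is a routine Stokes argument.
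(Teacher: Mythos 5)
Your proposal is correct and is precisely the ``standard computation utilising Stokes' theorem and the exactness of $L$'' that the paper itself invokes without further detail: split off the boundary arcs on $L$ (where $\overline{\varphi}\alpha|_{TL}=df$ telescopes, with $f$ constant on the negative end), evaluate the asymptotic arc integrals at the punctures, use compatibility of $J$ with $d(\overline{\varphi}\alpha)$ for positivity of the integrand, and compare with the supremum defining $E_H$. Two small points to tidy: with the boundary orientation of $D_\rho$ the positive-puncture arc contributes $+e^M\ell(a)$ and each negative-puncture arc $-e^{-M}\ell(b_i)$ (your intermediate signs are opposite to your final, correct formula, so fix the orientation convention), and one cannot literally take $\varphi_k\nearrow\overline{\varphi}$ inside $\mathcal{C}$ (smoothness at $t=M$ forces $\varphi_k>e^M$ just beyond $M$), but a uniform approximation by elements of $\mathcal{C}$ combined with the same Stokes formula yields $E(u)\le E_H(u)$ just as well.
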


We will also be interested in the case when $L=L_0 \cup L_1$, where each component $L_i$ is embedded, but where we allow intersections $L_0 \cap L_1 \neq \emptyset$ inside $[-M,M] \times Y$. Suppose that $a$ and $b$ both start on $L_1$ and end on $L_0$, $c_i$ has both endpoints on $L_0$, and $d_i$ has both endpoints on $L_1$. Letting $\mathbf{c}=c_1 \cdot \hdots \cdot c_{m_0}$ and $\mathbf{d}=d_1 \cdot \hdots \cdot c_{m_1}$, we write
\[\mathcal{M}^J_{L_0,L_1}(a;\mathbf{c},b,\mathbf{d}) := \mathcal{M}^J_{L_0\cup L_1}(a;\mathbf{c}b\mathbf{d}).\]
The reason for this notation is that, using an appropriate conformal identification of the domain, we will consider such a disc to be a $J$-holomorphic strip
\[ u \colon (\R \times [0,1],\R \times \{0\},\R \times \{1\}) \to (\R \times Y, L_0, L_1) \]
having one boundary component on $L_0$ and one boundary component on $L_1$ (although possibly having additional negative punctures on each boundary arc). Here $u$ is asymptotic to a cylinder over $a$ and $b$ as $s \to -\infty$ and $+\infty$, respectively, using the coordinates $(s,t)$ on $\R \times [0,1] =\{s+it; \:t \in [0,1]\} \subset \C$.

We will also consider the moduli spaces $\mathcal{M}^J_{L_0,L_1}(a;\mathbf{c},b,\mathbf{d})$, where we allow any of $a$ or $b$ to be double-points in $L_0 \cap L_1$, in which case a strip $u \in \mathcal{M}^J_{L_0,L_1}(a;\mathbf{c},b,\mathbf{d})$ is required to converge $a$ and $b$ as $s \to -\infty$ and $+\infty$, respectively. For a double-point $p \in L_0 \cap L_1$, we define
\[\mathfrak{a}(p):=f_1(p)-f_0(p)\]
where $f_i \colon L_i \to \R$ are potentials of  $\overline{\varphi}(t)\alpha$ which are required to coincide on the negative ends. Similarly to Formula \eqref{eq:energy} one can compute
\begin{Prop}
Let $u \in \mathcal{M}^J_{L_0,L_1}(p;\mathbf{c},q,\mathbf{d})$, where $p$ and $q$ are allowed to be either Reeb chords or double points, and suppose that $J$ is an almost complex structure satisfying the above properties. It follows that
\begin{equation}
\label{eq:energy2}
0 \le E(u)=\mathfrak{a}(p)-\mathfrak{a}(q)- \left( \sum_{i=1}^{m_0} \mathfrak{a}(c_i) + \sum_{i=1}^{m_1} \mathfrak{a}(d_i)\right) \le E_H(u)
\end{equation}
where, in the case $M>0$, $E(u)=0$ if and only if $u$ is constant.
\end{Prop}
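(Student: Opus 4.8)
The plan is to carry out the Stokes-theorem computation behind Formula \eqref{eq:energy}, only now on a strip with two boundary components, keeping careful track of what happens when an asymptote is a double point rather than a Reeb chord. Write the domain as $\Sigma = \R \times [0,1]$ with its boundary punctures removed, so that $E(u) = \int_\Sigma u^* d(\overline{\varphi}\alpha)$ with $d(\overline{\varphi}\alpha) = \overline{\varphi}'\, dt \wedge \alpha + \overline{\varphi}\, d\alpha$. For the nonnegativity $E(u) \ge 0$ I would argue pointwise: where $u$ maps into $[-M,M]\times Y$ one has $d(\overline{\varphi}\alpha) = d(e^t\alpha)$, which tames $J$, so $u^* d(e^t\alpha) \ge 0$ there, with equality exactly where $du = 0$; on the two cylindrical ends $\{t \le -M\}$ and $\{t \ge M\}$ the form equals $e^{\mp M}d\alpha$ and $J$ is cylindrical, so $u^*(e^{\mp M}d\alpha) \ge 0$ by the standard nonnegativity of $d\alpha$ on $J$-complex lines. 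The characterisation $E(u) = 0 \iff u$ constant (for $M>0$) then follows from the taming of $J$ by $d(e^t\alpha)$ on $[-M,M]\times Y$ together with unique continuation, exactly as for \eqref{eq:energy}, and the upper bound $E(u) \le E_H(u)$ is obtained by comparing $\overline{\varphi}$ with the admissible weight functions in $\mathcal C$, again as in that proposition.

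The heart of the argument is the evaluation of $E(u) = \int_\Sigma u^* d(\overline{\varphi}\alpha)$ by Stokes. I would truncate $\Sigma$ by excising half-disk neighbourhoods of the negative punctures $c_i, d_j$ and the slices $\{|s| \ge R\}$, apply Stokes on the compact piece, and then let the neighbourhoods shrink and $R \to \infty$. Three types of boundary contribution appear in the limit. First, along the arcs of $\R \times \{0\}$ and $\R \times \{1\}$ lying in $L_0$ resp.\ $L_1$ we have $u^*(\overline{\varphi}\alpha) = d(f_0 \circ u)$ resp.\ $d(f_1 \circ u)$, so these integrals telescope and contribute only through the limiting values of $f_0, f_1$ at the corners adjacent to the punctures. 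Second, at each negative puncture $c_i$ on the $L_0$-side the map is asymptotic to the trivial half-strip over the Reeb chord $c_i$ on the negative end, where $\overline{\varphi} \equiv e^{-M}$; this slice contributes $-e^{-M}\ell(c_i)$, and since $f_0$ is constant on the negative end the adjacent corner values of $f_0$ cancel, giving a net $-\mathfrak{a}(c_i)$; likewise each $d_j$ gives $-\mathfrak{a}(d_j)$, where the convention that $f_0$ and $f_1$ agree on the negative ends is exactly what makes these cancellations consistent. Third, at $s \to -\infty$ (the asymptote $p$): if $p$ is a Reeb chord from $\Lambda_1$ to $\Lambda_0$, it lives at the positive end where $\overline{\varphi} \equiv e^M$, the slice contributes $+e^M\ell(p)$, and the adjacent corner values $f_1(p_s)$, $-f_0(p_e)$ supply the correction, so the net is $+\mathfrak{a}(p)$ (with the same sign and primitive correction as for $\mathfrak{a}(a)$, $a \in \mathcal Q(\Lambda^+)$, in \eqref{eq:energy}); if instead $p$ is a double point of $L_0 \cap L_1$, the slice shrinks to the point $p$ and contributes nothing, while the $L_1$- and $L_0$-boundary arcs both approach $p$, contributing corner values $f_1(p)$ and $-f_0(p)$, so the net is again $+\mathfrak{a}(p) = f_1(p) - f_0(p)$. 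The end $s \to +\infty$ (the asymptote $q$) is handled identically but with the opposite boundary orientation of the strip, producing $-\mathfrak{a}(q)$. Adding up the three contributions yields \eqref{eq:energy2}.

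The only real obstacle is justifying the passage to the limit: one must know that $u$ converges exponentially fast, to the trivial half-strip over the appropriate Reeb chord in the cylindrical regions and to the corresponding double point in $L_0 \cap L_1$, so that the truncated boundary integrals genuinely converge to the asserted quantities. For the Reeb-chord ends this is the asymptotic analysis underlying the finite-Hofer-energy hypothesis, which holds by the same arguments as in \cite{PropPseudI} (already quoted in this section); for the double-point ends it is the standard asymptotic convergence of finite-energy Floer strips. The remaining, purely bookkeeping, subtlety is keeping the signs coherent under the boundary orientation of the strip in the case distinction Reeb chord versus double point, which the computation above is arranged to handle uniformly. Once these points are in place the identity \eqref{eq:energy2} is immediate, and, exactly as for \eqref{eq:energy}, the equality case $E(u) = 0$ forces $du = 0$ on the part of the domain mapping into $[-M,M]\times Y$ and hence, by unique continuation, $u$ constant when $M > 0$.
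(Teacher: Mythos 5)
Your proposal is correct and follows exactly the route the paper has in mind: the paper offers no details for this proposition, merely asserting it is the same Stokes-theorem computation as for \eqref{eq:energy}, and your argument is precisely that computation, with the right bookkeeping for the primitives $f_0,f_1$ (constant and equal on the negative ends) and for the two possible types of asymptote $p,q$. The pointwise non-negativity, the comparison with the weights in $\mathcal{C}$, and the equality case via taming on $[-M,M]\times Y$ plus unique continuation all match the standard argument the paper is invoking.
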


\subsection{Transversality results}
\label{sec:trans}
In the case when $L=\R \times \Lambda$ is a trivial cylinder and $Y=P \times \R$ is a contactisation, we will chose $J=\widetilde{J}_P$ to be the uniquely defined \emph{cylindrical lift} of a compatible almost complex structure $J_P$ on $P$, i.e.~the cylindrical almost complex structure for which the canonical projection $\R \times P \times \R \to P$ is $(\widetilde{J}_P,J_P)$-holomorphic. Observe that the moduli spaces of $J_P$-holomorphic discs with boundary on the exact Lagrangian immersion $\Pi_{\operatorname{Lag}}(\Lambda)\subset P$ having one positive puncture is transversely cut-out for a suitable generic choice of $J_P$ by \cite{ContHomP}. Finally, the latter moduli spaces being transversely cut out implies that the moduli spaces $\mathcal{M}^{\widetilde{J}_P}_{\R \times \Lambda}(a;\mathbf{b})$ are transversely cut out as well \cite[Theorem 2.1]{Lifting}.

In the setting when $L$ is not cylindrical, the following technical result will be crucial for achieving transversality. Recall that that a pseudo-holomorphic map $u\colon (\Sigma,\partial \Sigma) \to (X,L)$ from a punctured Riemann surface is called \emph{simple} if the subset
\[\{p \in \Sigma; \:\: d_p u \neq 0, \:u^{-1}(u(p))=\{p\}\} \subset \Sigma \]
is open and dense. Standard techniques \cite{JholCurves} show that the simple pseudo-holomorphic curves are transversely cut out solutions for a generic almost complex structure.

We will prove that a disc with boundary on an exact Lagrangian cobordism in $\R \times P \times \R$ having exactly one positive puncture asymptotic to a cylinder over the Reeb chord $a$ is simple. Otherwise the techniques in \cite{ExistenceInjective}, \cite{RelativeFrames} could be used to extract a non-constant pseudo-holomorphic disc with boundary on $L$ without any positive puncture, thus leading to a contradiction. Apart from exactness, the following property is crucial here: the Reeb chord $a$ is an embedded integral curve and hence that $u$ is an embedding onto its image inside the subset $\{ t \ge N\}$ for $N>0$ sufficiently large.
\begin{Thm}
Let $L \subset \R \times P \times \R$ be an exact Lagrangian cobordism. Then $u \in \mathcal{M}^J_L(a;\mathbf{b})$ is simple.
\end{Thm}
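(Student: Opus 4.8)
The plan is to argue by contradiction: from a hypothetical $u \in \mathcal{M}^J_L(a;\mathbf{b})$ that fails to be simple I would extract a non-constant $J$-holomorphic disc with boundary on $L$ carrying no positive puncture, and such a disc is ruled out by the energy identity \eqref{eq:energy}. First I would record two preliminary facts. Since $u$ has a positive puncture it is non-constant, and by the asymptotic analysis of \cite{PropPseudI}, together with the hypothesis that the Reeb chord $a$ is an \emph{embedded} integral curve of $R_\alpha$, there is an $N>0$ such that $u^{-1}(\{t\ge N\})$ is a punctured neighbourhood of the unique positive puncture $p_0$ on which $u$ is an embedding with $du\neq 0$. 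In particular the set $Z(u)$ of injective points of $u$ is non-empty and open, so the only way $u$ can fail to be simple is that $Z(u)$ is not dense.

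Assuming $u$ is not simple, I would invoke the structure theory for non-simple $J$-holomorphic discs with Lagrangian boundary of Lazzarini \cite{ExistenceInjective, RelativeFrames}. This writes $\im(u)$ as a finite union $\bigcup_j \im(v_j)$ of images of \emph{simple}, non-constant $J$-holomorphic discs $v_j$ with boundary on $L$ and finite Hofer energy (each at most $E_H(u)$), through which $u$ factors with positive multiplicities; non-simplicity of $u$ forces this decomposition to be non-trivial. Because $u$ is an embedding on the neighbourhood $u^{-1}(\{t\ge N\})$ of $p_0$, the part of $\im(u)$ lying in $\{t>N\}$ is covered, with multiplicity one, by exactly one of the pieces --- say $v_1$ --- which moreover is a reparametrisation of $u$ near $p_0$. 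This already rules out the possibility that $u$ is a genuine multiple cover of a single piece (that would force $u$ to be a non-trivial branched cover near $p_0$, contradicting the embeddedness), so there must be at least a second piece $v:=v_j$, $j\neq 1$. Unique continuation, applied using $\im(v)\subseteq\im(u)$ and the injectivity of $u$ on $u^{-1}(\{t\ge N\})$, then forces $\im(v)\cap\{t>N\}=\emptyset$; hence $v$ has no positive puncture.

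To finish I would apply the energy estimate to $v$. Since $L$ is a single embedded exact Lagrangian cobordism, the boundary punctures of $v$ are all negative, each asymptotic to a cylinder over a Reeb chord $b_i$ of $\Lambda^-$. The identity \eqref{eq:energy}, with an empty set of positive asymptotics, reads
\[
0 \;\le\; E(v) \;=\; -\sum_i \mathfrak{a}(b_i) \;=\; -e^{-M}\sum_i \ell(b_i)\;\le\; 0,
\]
so $v$ has no negative puncture either. But then $v$ is a non-constant $J$-holomorphic disc with closed boundary on the exact Lagrangian $L$, whence $E(v)=0$ by Stokes' theorem and therefore $v$ is constant --- a contradiction. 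Thus $Z(u)$ is dense and $u$ is simple, which (for generic $J$) makes the moduli space $\mathcal{M}^J_L(a;\mathbf{b})$ transversely cut out.

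I expect the middle step to be the main obstacle. Lazzarini's results are stated for discs with compact Lagrangian boundary conditions and no punctures, while here the domain carries boundary punctures and the target is a symplectisation with a cylindrical end. The work is to check that his local analysis near non-injective points applies verbatim (it is local), that the extracted pieces inherit finite Hofer energy and hence the asymptotic convergence of \cite{PropPseudI}, and that the multiplicity bookkeeping is compatible with the embeddedness of $u$ near its positive puncture --- the latter being precisely what guarantees that the ``extra'' piece $v$ has no positive end and therefore triggers the energy contradiction.
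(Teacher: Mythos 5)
Your overall strategy is the same as the paper's (extract, via Lazzarini-type decomposition, a non-constant disc with boundary on $L$ having no positive puncture, and kill it by exactness), but the step you yourself flag as ``the main obstacle'' is a genuine gap, and it is not of the kind you suggest. You propose to apply Lazzarini's structure theorem directly to $u$, and you argue that this should go through because his analysis near non-injective points ``is local''. The local part (Carleman similarity, discreteness of intersections or coincidence of branches) does indeed transfer, but the statement you actually need --- the global decomposition $\im(u)=\bigcup_j\im(v_j)$ into simple pieces with positive multiplicities, through which $u$ factors, cf.\ \cite[Theorem A, Proposition 5.4]{RelativeFrames} --- is a global theorem about discs defined on the \emph{closed} disc with boundary on a \emph{closed} Lagrangian in a compact region; its proof uses the graph/frame construction on the whole compact domain. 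In your setting the domain is punctured and the boundary condition $L$ is non-compact with cylindrical ends, so there is no a priori reason the pieces $v_j$ exist, are finite in number, have finite Hofer energy, or have punctures asymptotic to Reeb chords (which your final energy computation with \eqref{eq:energy} needs). Asserting that this ``applies verbatim'' is precisely the missing content; the subsequent bookkeeping (multiplicity one near $p_0$, exclusion of the multiple-cover case, unique continuation forcing the extra piece out of $\{t>N\}$) also depends on having that decomposition with matching asymptotics in hand.

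For comparison, the paper's proof is built exactly to avoid this issue: using the asymptotics and Carleman's similarity principle it truncates the domain to a compact disc $V$ containing $u^{-1}\{|t|\le N+1\}$, constructs a \emph{closed} auxiliary Lagrangian $\widetilde{L}$ inside a small neighbourhood $X$ of $u(V)$ with $u(\partial V)\subset\widetilde{L}$, and only then applies \cite[Theorem A]{RelativeFrames} to $u|_V$, which now is a compact-domain disc with closed Lagrangian boundary. Since $\widetilde{L}$ is no longer exact, the role of your energy identity is played by an area formula $0\le\int_v d(e^t\alpha)=C_0\eta_0(v)-\sum_i C_i\eta_i(v)$ expressed through intersection numbers $\eta_i$ with symplectic hypersurfaces $\{ \pm N\}\times P\times\{z_c\}$, and $\eta_0(u|_V)=1$ forces the decomposition to consist of a single simple piece with multiplicity one. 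If you want to salvage your more direct route, you would have to prove a punctured, non-compact analogue of Lazzarini's decomposition (including asymptotic control of the pieces), which is a substantial analytic statement and not available in the literature you cite.
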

\begin{proof}
We start by observing that $u$ is an embedding near each of its punctures because of the asymptotic properties. Moreover, for sufficiently large $N \gg 0$ we may assume that $u|_{u^{-1}\{ t \ge N\}}$ is arbitrarily close to a parametrisation of $[N, +\infty) \times a$ in the $C^1$-topology while $u|_{u^{-1}\{ t \le -N\}}$ is arbitrarily close to a parametrisation of $\bigcup_{i=1}^m (-\infty,-N] \times b_i$ in the $C^1$-topology.

Let $p_0 \in \partial D^2$ denote the positive puncture, and $p_i \in \partial D^2$, $i=1,\hdots,m$, the negative punctures. %Given $N \gg 0$ sufficiently large, we have
We define
\[U_0:=u^{-1}\{ t \ge N +1\} \subset D^2,\]
and
\[\bigcup_{i=1}^m U_i=u^{-1}\{ t \le -N -1\} \subset D^2,\]
where each $U_i \subset D^2$ is a connected punctured neighbourhood of $p_i$ satisfying $U_i \cap U_j = \emptyset$ for $i\neq j$. As a consequence of Carleman's similarity principle, see e.g.~\cite[Lemma 4.2]{ExistenceInjective}, $u|_{U_i}$ and $u|_{U_j}$ either intersect in a discrete set, or coincide after a holomorphic identification $U_i \simeq U_j$ of the domains. In particular, the domain $D^2 \setminus (U_0 \cup U_1 \cup \hdots \cup U_m)$ is contained in a closed domain $V \subset D^2 \setminus \{ p_0,\hdots,p_m\}$ diffeomorphic to a disc with smooth boundary and moreover satisfying the following property: the restriction of $u$ to two arcs in $\partial V \setminus \partial D^2$ are embeddings which either are disjoint or coincide.

For a sufficiently small neighbourhood $X \subset \R \times P \times \R $ of $u(V)$, we may use the above property of $u|_V$ to find a closed embedding of a Lagrangian submanifold $\widetilde{L} \subset X$ (with no boundary) for which $u(\partial V) \subset \widetilde{L}$. To that end, observe that $u(\partial V \setminus \partial D^2)$ is embedded and isotropic, while $u(V \cap \partial D^2) \subset L \cap X$ already is contained in a Lagrangian submanifold. The standard neighbourhood theorem for isotropic submanifolds can now readily be used to extend the isotropic curves $u(\partial V) \cap \{|t| \ge N  +1 \}$ to a Lagrangian submanifold coinciding with $L$ in a neighbourhood of $\{|t|=N+1 \}$.

For a Reeb chord $c \in \mathcal{Q}(\Lambda)$ we let $z_c \in \R$ be the midpoint of its image under the canonical projection $P \times \R \to \R$. The algebraic intersection number with the symplectic hypersurfaces
\begin{eqnarray*}
Y_0 & := & (\{N\} \times P \times \{z_a\}) \cap X,\\
Y_i & := & (\{-N\} \times P \times \{z_{b_i}\} \cap X, \:\: i=1,\hdots,m,
\end{eqnarray*}
which each may be assumed to intersect $u$ transversely in a single geometric point, induce cohomology classes $\eta_i \colon H_2(X,\widetilde{L}) \to \Z$ after possibly shrinking the neighbourhood $X \supset u(V)$ (thus making $Y_i$ disjoint from $X \cap \widetilde{L}$). An explicit calculation, together with an appropriate choice of orientation of the above symplectic hypersurfaces, gives

\[\eta_i(u|_V)=\begin{cases}
1, & i=0,\\
| \{ j \in \{1,\hdots,m\}; b_j=b_i \}| \ge 1, & i=1,\hdots,m.
\end{cases}\]
It is moreover the case that
\begin{equation}
\label{eq:positivity} \eta_i(v) \ge 0, \:\: i=0,\hdots,m,
\end{equation}
for any pseudo-holomorphic curve $v$ having boundary on $\widetilde{L}$ and image contained in $u(V)$, where equality holds if and only if $v$ is disjoint from $Y_i$. This fact follows from the fact that this is the case for $u$ (since it's image intersects $Y_i$ transversely in a single geometric point) and that both $u$ and $v$ can be thought of as holomorphic maps from $V$ to $(u(V),J|_{Tu(V)})$, both which are one-dimensional complex domains. The open mapping theorem thus implies that each geometric intersection of $v$ and $Y_i$ contributes positively.

We now compute the symplectic area $\int v^*d(e^t\alpha)$ where $v$ is a non-constant pseudo-holomorphic curve having boundary on $\widetilde{L}$ and image contained in $u(V)$. This computation is similar to the computation of \eqref{eq:energy} using Stoke's theorem with the only difference that, while $\widetilde{L} \cap \{ |t| \le N +1 \}$ is exact, $\widetilde{L}$ is not. By an application of the open mapping theorem in one-dimensional complex analysis, using the fact that $u|_V$ is embedded near $Y_i$, we can compute the behaviour of the boundary of $v$ in the region $\{ |t| \ge N +1 \}$ in terms of $\eta_i(v)$. It follows that there are constants $C_i>0$, $i=0,\hdots,m$, for which
\begin{equation}
\label{eq:areaest}
0 \le \int_v d(e^t\alpha) = C_0\eta_0(v)-(C_1\eta_1(v) + \hdots + C_m\eta_m(v)), \:\: [v] \in H_2(X,\widetilde{L}),
\end{equation}
where equality holds if and only if $v$ is constant.

We note that \cite[Theorem A]{RelativeFrames} can be applied to the $J$-holomorphic disc $u|_V$ having compact image in $X$ and boundary on $\widetilde{L}$. In particular, there are $J$-holomorphic discs $v_1,\hdots,v_l$ in $X$ with boundary on $\widetilde{L}$, where the $v_i$ are simple and satisfy $v_i(D^2) \subset u(V)$ together with
\begin{equation}
\label{eq:lazzarini}
[u|_V]=k_1[v_1]+\hdots+k_l[v_l] \in H_2(X,\widetilde{L}), \:\: k_i > 0.
\end{equation}
The refined statement \cite[Proposition 5.4]{RelativeFrames} moreover gives Riemann surfaces $\Sigma_i$ with boundary and $k_i$-fold coverings $h_i \colon (\Sigma_i,\partial \Sigma_i) \to (D^2,\partial D^2)$ together with embeddings $\iota_i \colon \Sigma_i \to D^2$, for which
\[ v_i \circ h_i=u|_V \circ \iota_i\]
is satisfied.

Formulas \eqref{eq:areaest} and \eqref{eq:positivity} imply that each $v_i$ must satisfy $\eta_0 > 0$ and hence, by Formula \eqref{eq:lazzarini}, we must have $l=1$ and $k_1=1$. An area consideration moreover shows that the image $\iota_1(\Sigma_1 \setminus \partial \Sigma_1) \subset D^2$ must be open and dense. It now follows that $u|_V$, and hence $u$, is simple.
\end{proof}

\subsection{Gromov-Hofer compactness under a neck-stretching sequence}
\label{sec:neckstretch}
We first need to recall some facts about the compactness properties of pseudo-holomorphic curves under a \emph{neck stretching sequence} as described in \cite[Section 3.4]{CompSFT}, \cite[Section 1.3]{IntroSFT} and \cite{CompAbbas}. We will restrict our attention to stretching the neck along a hypersurface in $(\R \times Y,d(e^t\alpha))$ of contact type being either of the form $\{ t_0 \} \times Y$, or $\{ t_0, t_1\} \times Y$ where $t_0<t_1$, and when $L \subset \R \times Y$ is an exact Lagrangian cobordism from $\Lambda_-$ to $\Lambda_+$. In some neighbourhood of $\{ t_i \} \times Y$ we make the assumption that $J$ coincides with the cylindrical almost complex structure $J_i$ and that $L$ coincides with a cylinder over $\Lambda_i \subset Y$. Moreover, we let $J_-$ and $J_+$ denote the cylindrical almost complex structure coinciding with $J$ on the subset $\{ t \le -M \}$ and $\{ t \ge M \}$, respectively.

In this case, we say that a sequence of almost complex structures $J^\tau$ stretches the neck around the above hypersurface if $J^\tau$ is equal to $J$ except in a neighbourhood of the hypersurface, where it is determined by
\begin{itemize}
\item $J^\tau|_{\ker \alpha}=J$; and
\item $J^\tau \partial_t = (1+\tau \sigma(t))R_\alpha$,
\end{itemize}
where $\sigma \colon \R \to \R_{\ge 0}$ is a smooth function satisfying $\sigma(t_i)=1$ and whose support is contained in some sufficiently small neighbourhood of the hypersurface.

In the first case we let $L_\alpha$ and $L_\beta$ be the cylindrical completions inside $\R \times Y$ of $L \cap \{ t \le t_0\}$ and $L\cap \{ t \ge t_0 \}$ respectively, and similarly we will let $J_\alpha$ and $J_\beta$ be the corresponding cylindrical completions of the restrictions of $J$ to $\{ t \le t_0\}$ and $\{ t \ge t_0\}$, respectively.

In the latter case we let $L_\alpha$, $L_\beta$, and $L_\gamma$ be the cylindrical completions of $L \cap \{ t \le t_0\}$, $L \cap \{ t_0 \le t \le t_1 \}$, and $L \cap \{ t \ge t_1\}$, respectively, and $J_\alpha$, $J_\beta$, and $J_\gamma$ the cylindrical completions of $J$ restricted to $\{ t \le t_0\}$, $\{ t_0 \le t \le t_1 \}$, and $\{ t \ge t_1\}$, respectively.

Recall that the moduli space of pseudo-holomorphic curves in a symplectisation can be compactified into the space of so-called connected pseudo-holomorphic buildings. Roughly speaking, a pseudo-holomorphic building in $\R \times Y$ consists of a finite number of levels $1,2,3,\hdots$, where each level consists of a finite number of pseudo-holomorphic curves in $\R \times Y$ (possibly with boundary on a Lagrangian cobordism). The asymptotics of the curves on two consecutive levels, as well as the boundary conditions, are moreover required to match up, so that the natural compactification produces a connected piecewise smooth symplectic curve in $\R \times Y$. We refer to \cite{IntroSFT} and \cite{CompAbbas} for more details.

For the first case, the relevant pseudo-holomorphic buildings are those consisting of the following consecutive levels
\begin{itemize}
\item a non-trivial $J_-$-holomorphic building with boundary on $\R \times \Lambda_-$;
\item a level consisting of $J_\alpha$-holomorphic curves with boundary on $L_\alpha$;
\item a non-trivial $J_0$-holomorphic building with boundary on $\R \times \Lambda_0$;
\item a level consisting of $J_\beta$-holomorphic curves with boundary on $L_\beta$; and
\item a non-trivial $J_+$-holomorphic building with boundary on $\R \times \Lambda_+$,
\end{itemize}

The definition is analogous in the second case.

By the $\alpha$, $\beta$, and $\gamma$-levels of a pseudo-holomorphic building, we will mean the $J_\alpha$, $J_\beta$ and $J_\gamma$-holomorphic sub-buildings with boundary on $L_\alpha$, $L_\beta$, and $L_\gamma$, respectively, appearing in the building.

The Gromov-Hofer compactness theorem in this setting states that a sequence of $J^\tau$-holomorphic curves with boundary on $L$, $\tau \to +\infty$, has a subsequence which converges to a pseudo-holomorphic building of the above form.

Conversely, under the further assumption that every building as above consists of transversely cut-out components, pseudo-holomorphic gluing gives a bijection transversely cut-out rigid $J^\tau$-holomorphic curves with boundary on $L$ for $\tau \gg 0$ sufficiently large, and pseudo-holomorphic buildings as above in which every component is transversely cut-out and rigid.

\subsection{Relating the counts of pseudo-holomorphic discs on a cobordism and its two-copy}\label{sec:relat-counts-pseudo}
It will be crucial to control the behaviour of certain pseudo-holomorphic discs on the two-copy $L \cup L'$, where $L$ is an exact Lagrangian cobordism and $L'$ is arbitrarily $C^1$-close to $L$. To that end, we will establish a substitute for a special case of the conjectural analytic result \cite[Lemma 4.11]{RationalSFT2}, which would give a bijection between the pseudo-holomorphic discs with boundary on $L \cup L'$ and pseudo-holomorphic discs with boundary on $L$ together with certain gradient flow lines.

Choose a smooth cut-off function $\rho \colon \R \to [0,1]$ satisfying $\rho(t)=0$ for $t \le 1$, $\rho'(t) \ge 0$ for all $t$, and $\rho(t)=1$ for $t \ge 2$. For each $N \ge 0$, we use $\rho$ to construct the smooth cut-off function $\rho_N \colon \R \to [0,1]$ which is determined by the property $\rho_N(-t)=\rho_N(t)$, together with $\rho_N(t)=\rho(t-(M+N))$ for $t \ge 0$.

Consider the autonomous Hamiltonian $h_N(t,p,z):=e^t \rho_N(t)$ on $\R \times P \times \R$. Observe that $\phi^\epsilon_{h_N}(L)$ is an exact Lagrangian cobordism from $\Lambda^-_\epsilon$ to  $\Lambda^+_\epsilon$, where the latter are Legendrian submanifolds obtained from $\Lambda^-$ and $\Lambda^+$ by the time-$\epsilon$ map of the Reeb flow. We will take $\epsilon>0$ sufficiently small so that, in particular, $\epsilon < \min_{c \in \mathcal{Q}(\Lambda^+)\cup\mathcal{Q}(\Lambda^-)}\ell(c)$. Similarly, we will also consider the autonomous Hamiltonians $h_\pm$ on $\R \times P \times \R$ given by $e^t \rho(\pm t)$, together with the time-$\epsilon$ maps $\phi^\epsilon_{h_\pm}$ of the corresponding flows.

Let $a$ be a Reeb chord on $\Lambda^+$, while $\mathbf{c}$ and $\mathbf{d}$ are words of Reeb chords on $\Lambda^-$, and $b$ is a Reeb chord on $\Lambda^-$. We will let $a^\epsilon$ denote the unique Reeb chord from $\Lambda^+_\epsilon$ to $\Lambda^+$ which is contained inside $a$ and, similarly, we will let $b^\epsilon$ denote the unique Reeb chord from $\Lambda^-_\epsilon$ to $\Lambda^-$ which is contained inside $b$. We will also write $\mathbf{d}^\epsilon=d_1^\epsilon \cdot \hdots \cdot d_{m_1}^\epsilon$, where $d_i^\epsilon$ is the Reeb chord on $\Lambda^-_\epsilon$ obtained as the image of $d_i$ under the time-$\epsilon$ map of the Reeb flow.

\begin{Thm}
\label{thm:pushoff}
Let the compatible almost complex structure $J$ on $\R \times P \times \R$ be regular, where $J$ coincides with the cylindrical lift $\widetilde{J}_P$ of the regular almost complex structure $J_P$ on $P$ outside of $[-M,M] \times P \times \R$. If $L'$ is an exact Lagrangian cobordism that is $C^1$-close to $L$ and coincides with $L$ outside of some fixed compact set, $\epsilon>0$ is sufficiently small and $N>0$ sufficiently large, then there are bijections
\begin{eqnarray}
 \mathcal{M}^J_{\phi^\epsilon_{e^t}(L')}(d^\epsilon;\mathbf{d}^\epsilon) & \simeq & \mathcal{M}^J_L(d;\mathbf{d}) \\ \label{correspondence}
\mathcal{M}^J_{L,\phi^\epsilon_{h_N}(L')}(a^\epsilon;\mathbf{c},b^\epsilon,\mathbf{d}^\epsilon) & \simeq & \mathcal{M}^J_L(a;\mathbf{c}b\mathbf{d}),
\end{eqnarray}
of rigid moduli spaces. Moreover, if $L$ is spin, these bijections preserve the coherent orientations.
\end{Thm}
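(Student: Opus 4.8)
The plan is to establish the second bijection \eqref{correspondence}, between strips on the two-copy $L \cup \phi^\epsilon_{h_N}(L')$ and discs on $L$, in full; the first bijection is the degenerate case in which no ``bending'' near the ends is needed, and it also follows directly from the invariance of rigid, transversely cut out moduli spaces under $C^1$-small perturbations of the Lagrangian boundary condition together with the $z$-translation invariance of $\R \times P \times \R$. Throughout I would take $L'$ to be a \emph{generic} $C^1$-small perturbation of $L$, so that the regular structure $J$ also achieves transversality for the two-copy moduli spaces in the relevant action range; this is an open condition and so persists under the further perturbations below.

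I would first construct the forward map. Fix a rigid disc $u \in \mathcal{M}^J_L(a;\mathbf{c}b\mathbf{d})$, which is transversely cut out since $J$ is regular, and conformally identify its domain with a strip $\R \times [0,1]$ so that the positive puncture at $a$ is the end $s \to +\infty$, the negative puncture at $b$ is the end $s \to -\infty$, the boundary arc carrying $\mathbf{c}$ is $\R \times \{0\}$, and the arc carrying $\mathbf{d}$ is $\R \times \{1\}$. Since $\phi^\epsilon_{h_N}(L')$ is $C^1$-close to $L$ over the compact part and agrees with the $\epsilon$-Reeb translate of the cylinders $\R \times \Lambda^\pm$ near the ends, an implicit function theorem argument based at $u$ --- whose deformation operator is invertible because $u$ is transversely cut out and rigid --- produces, for $\epsilon>0$ small, $N>0$ large, and $L'$ sufficiently close, a unique nearby $J$-holomorphic strip $\widetilde u$ with one boundary arc on $L$ and one on $\phi^\epsilon_{h_N}(L')$. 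An inspection of the asymptotics identifies $\widetilde u$ as an element of $\mathcal{M}^J_{L,\phi^\epsilon_{h_N}(L')}(a^\epsilon;\mathbf{c},b^\epsilon,\mathbf{d}^\epsilon)$: near $s = \pm \infty$ the two boundary arcs of $\widetilde u$ run along $\Lambda^\pm$ and $\Lambda^\pm_\epsilon$, so the asymptotic chords become the sub-chords $a^\epsilon \subset a$ and $b^\epsilon \subset b$, while the negative punctures on the $\mathbf{c}$-arc remain on $\Lambda^-$ and those on the $\mathbf{d}$-arc are carried to $\Lambda^-_\epsilon$ by the $\epsilon$-Reeb shift. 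As $D_{\widetilde u}$ is a small perturbation of $D_u$, the strip $\widetilde u$ is again rigid, and $u \mapsto \widetilde u$ is injective.

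Next I would show this map is onto, and re-establish injectivity, by a compactness-and-gluing argument. If it were not onto for all admissible parameters, there would be a sequence $\widetilde u_k$ of two-copy strips with $\epsilon_k \to 0$, $N_k \to \infty$, and $L'_k \to L$ in $C^1$, none lying in the image. The energy identity \eqref{eq:energy2}, together with $\mathfrak{a}(a^{\epsilon_k}) \to \mathfrak{a}(a)$, $\mathfrak{a}(b^{\epsilon_k}) \to \mathfrak{a}(b)$, and $\mathfrak{a}(d_i^{\epsilon_k}) = \mathfrak{a}(d_i)$, gives a uniform energy bound whose limit is $\mathfrak{a}(a) - \mathfrak{a}(b) - \sum_i \mathfrak{a}(c_i) - \sum_i \mathfrak{a}(d_i)$, i.e.~the energy of a disc in $\mathcal{M}^J_L(a;\mathbf{c}b\mathbf{d})$. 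By the Gromov--Hofer compactness recalled in Section \ref{sec:neckstretch}, applied while stretching the neck along suitable hypersurfaces in the cylindrical region and letting the parameters run to their limits simultaneously, the sequence converges to a pseudo-holomorphic building. The action bookkeeping, the rigidity of the $\widetilde u_k$ --- which places the limit building in a codimension-zero stratum of the compactification and thereby excludes non-trivial symplectisation levels --- and the transversality statement of Section \ref{sec:trans} together should force the limit building to be a single rigid disc $u_\infty \in \mathcal{M}^J_L(a;\mathbf{c}b\mathbf{d})$. The SFT gluing statement at the end of Section \ref{sec:neckstretch} then identifies $\widetilde u_k$, for $k \gg 0$, with the strip produced by the forward map applied to $u_\infty$, a contradiction; the same reasoning also yields injectivity, so the forward map is a bijection.

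For the orientations, when $L$ is spin so are $L'$ and $\phi^\epsilon_{h_N}(L')$, with spin structures varying continuously along the deformation; the coherent orientations are read off from orientations of the determinant lines of the linearised operators, and along a path of Fredholm operators interpolating $D_u$ and $D_{\widetilde u}$, with spin Lagrangian boundary conditions throughout, these orientations vary continuously, so the bijection respects the coherent orientations. The hard part will be the surjectivity step, and within it the exclusion from the limit building of the ``unexpected'' components: non-constant holomorphic pieces supported in the thin region between the two $C^1$-close copies $L$ and $L'$, and broken configurations using the long Reeb chords of the two-copy near $a$ and near the $b_i$. This is precisely where one needs the promised substitute for the special case of the conjectural \cite[Lemma 4.11]{RationalSFT2}, and it is the only place where the three limits $\epsilon \to 0$, $N \to \infty$, and $L' \to L$ are genuinely used: together they squeeze the relevant action windows tightly enough that the Morse-theoretic degeneration of the two-copy can contribute only the constant flow line, which is what makes \eqref{correspondence} a clean bijection rather than one decorated with auxiliary gradient trajectories. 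Making this analysis quantitative, presumably along the lines of \cite{Lifting} and \cite{RationalSFT2}, is the crux of the argument.
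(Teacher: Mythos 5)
Your outline defers exactly the point that the theorem exists to address, and the steps you do spell out would not go through as stated. The forward map you propose -- an implicit function theorem based at a rigid disc $u \in \mathcal{M}^J_L(a;\mathbf{c}b\mathbf{d})$ producing a nearby two-copy strip -- is essentially the conjectural correspondence of \cite[Lemma 4.11]{RationalSFT2} that the paper explicitly says is \emph{not} available and for which Theorem \ref{thm:pushoff} is a substitute. There is no ready-made Fredholm setup in which $u$ (both boundary arcs on $L$, positive asymptotic the chord $a$ of $\Lambda^+$) is an approximate solution of the two-copy problem (boundary arcs on two different Lagrangians, positive asymptotic the mixed chord $a^\epsilon$ from $\Lambda^+_\epsilon$ to $\Lambda^+$): the asymptotic data and the boundary condition degenerate as $\epsilon \to 0$, so invertibility of $D_u$ for the one-copy problem does not give you a Newton iteration for the two-copy problem. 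Your fix of taking $L'$ generic so that "$J$ achieves transversality for the two-copy" also fails structurally: by hypothesis $L'$ coincides with $L$ outside a fixed compact set, and $\phi^\epsilon_{h_N}$ is the identity on $\{|t| \le M+N+1\}$, so $L$ and $\phi^\epsilon_{h_N}(L')$ \emph{coincide} along a large (eventually noncompact as $N \to \infty$) region; the intersection locus is neither transverse nor of Bott type, and Gromov--Floer compactness is simply not established for such boundary conditions. This is precisely the content of Lemma \ref{lem:breaking} in the paper, which excludes Floer breaking near the bad locus by an action estimate (using that the $d\alpha$-energy is computed through the holomorphic projection to $P$ and that $|\mathfrak{a}(p)| \le C\epsilon$ at intersection points); nothing in your proposal plays this role.

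The paper's actual mechanism is different and you would need its two ingredients. First, the parameter $N$ \emph{is} the neck-stretching parameter: a biholomorphism $(t,y) \mapsto (f(t),y)$ identifies $J^\tau$-curves on $L \cup \phi^\epsilon_{h_1}(L')$ with $J$-curves on $L \cup \phi^\epsilon_{h_{N(\tau)}}(L)$, so SFT compactness plus gluing directly yields the statement "for $N$ sufficiently large'', with no separate forward map needed. Second, the limit buildings must be pinned down concretely: in the cylindrical $\alpha$- and $\gamma$-levels, any component of non-positive index has $J_P$-holomorphic projection to $P$ with boundary on $\Pi_{\operatorname{Lag}}(\Lambda^\pm)$, and regularity of $J_P$ forces that projection to be constant, so these components are exactly the unique embedded strips in the planes $\R \times \{p_c\} \times \R$ (Lemma \ref{lem:strips}) -- which moreover are transversely cut out, as required for gluing. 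Your appeal to "action bookkeeping and rigidity'' does not exclude nontrivial index-zero configurations in the side levels, nor does it provide the transversality of those side components; both are needed before the gluing bijection, and hence the claimed identification with $\mathcal{M}^J_L(a;\mathbf{c}b\mathbf{d})$, can be invoked.
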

\begin{proof}
It suffices to prove the claim for $L'=L$. In fact, by standard compactness and index arguments, a sufficiently small perturbation of $L'$ in the $C^1$-topology induces a bijection between the rigid moduli spaces for a regular almost complex structure $J$.

Since $(\phi^\epsilon_{e^t})^{-1}$ is a biholomorphism outside of $[-M,M] \times P \times \R$ and maps $\phi^{\epsilon}(L)$ to $L$, the first bijection follows for $\epsilon>0$ sufficiently small. Namely, the solutions on the left-hand side can be seen as solutions to a compactly supported perturbation of the boundary-value problem defining the moduli space on the right-hand side.

The second bijection requires more work. We stretch the neck around the hypersurfaces $\{t=\pm( M+1)\} \subset \R \times P \times \R$ of contact type, as described in Section \ref{sec:neckstretch}, by considering the boundary condition $L \cup \phi^\epsilon_{h_N}(L')$ for $N=1$ together with the sequence of almost complex structures $J^\tau$.  In the current setting, for each $\tau \ge 0$, there is a biholomorphism of the form
\begin{gather*}
(\R \times Y,J) \to (\R \times Y,J^\tau),\\
(t,y) \mapsto (f(t),y).
\end{gather*}
In other words, $J^\tau$-holomorphic curves with boundary on $L \cup \phi^\epsilon_{h_1}(L')$ are in bijective correspondence with the $J$-holomorphic curves with boundary on $L \cup \phi^\epsilon_{h_{N(\tau)}} (L)$ for some $N(\tau)$, where $N(\tau)$ can be seen to be monotone increasing with $\tau$.

We now proceed to investigate the possible holomorphic buildings appearing as the limit of the moduli spaces on the left-hand side under a neck-stretching sequence as above. To that end, we have used Lemma \ref{lem:breaking} in order to justify that the SFT compactness theorem can be applied.

First, the $\beta$-level of such a building consists of solutions inside a moduli space of the form
\[\mathcal{M}^J_{L \cup L}(a';\mathbf{b}')=\mathcal{M}^J_L(a';\mathbf{b}')\]
which is transversely cut-out by assumption and, in particular, of non-negative index.

Second, by Lemma \ref{lem:strips}, the components in the $\alpha$ and $\gamma$-levels occurring in the limit are of non-negative index as well. Since thus every component occurring in the limit building is of non-negative index, and since the indices sum to zero by assumption, every component in the building is of index zero. It thus follows that every component appearing in the $\alpha$ and $\gamma$-layers is of the form described in Lemma \ref{lem:strips}.

In conclusion, we have shown that the possible buildings appearing as a limit when stretching the neck of the moduli space on the left-hand side consists of levels of the following form:
\begin{itemize}
\item The $\alpha$-level consisting of:
\begin{itemize}
\item The trivial strips $\R \times \{c_i\}$;
\item The unique strips in $\mathcal{M}^{\widetilde{J}_P}_{\phi^\epsilon_{h_-}(\R \times \Lambda^-)}(d_i;d_i^\epsilon)$ which are embedded and contained inside the planes $\R \times \{p_{d_i}\} \times \R$; and
\item The unique strip in $\mathcal{M}^{\widetilde{J}_P}_{\Lambda^-,\phi^\epsilon_{h_-}(\R \times \Lambda^-)}(b;b^\epsilon)$ which is embedded and contained inside the plane $\R \times \{p_b\} \times \R$;
\end{itemize}
\item The $\beta$-level consisting of a rigid solution in $\mathcal{M}^J_L(a;\mathbf{c}b\mathbf{d})$; and
\item The $\gamma$-level consisting of the unique strip in $\mathcal{M}^{\widetilde{J}_P}_{\Lambda^+,\phi^\epsilon_{h_+}(\R \times \Lambda^+)}(a^\epsilon;a)$ which is embedded and contained inside the plane $\R \times \{p_a\} \times \R$.
\end{itemize}

The sought bijection of solutions now follows by pseudo-holomorphically gluing buildings of the above type. To be able to glue, we must assume that the building consists of pseudo-holomorphic discs that all are transversely cut out solutions. For the solution in the $\beta$-level, this holds by assumption. The transversality of the solutions contained in the $\alpha$ and $\gamma$-levels was shown in Lemma \ref{lem:strips}.
\end{proof}

\begin{Lem}
\label{lem:strips}
Let $J_P$ be a regular compatible almost complex structure on $P$ and let $\widetilde{J}_P$ be its cylindrical lift. The solutions in
\begin{gather*}
\mathcal{M}^{\widetilde{J}_P}_{\Lambda^+,\phi^\epsilon_{h_+}(\R \times \Lambda^+)}(a^\epsilon;\mathbf{c}',b',\mathbf{d}'),\\
\mathcal{M}^{\widetilde{J}_P}_{\Lambda^-,\phi^\epsilon_{h_-}(\R \times \Lambda^-)}(a';\mathbf{c}',b^\epsilon,\mathbf{d}'),\\
\mathcal{M}^{\widetilde{J}_P}_{\phi^\epsilon_{h_-}(\R \times \Lambda^-)}(d_i;\mathbf{d}^\epsilon),
\end{gather*}
which are of non-positive index are in bijective correspondence with the unique embedded solutions contained inside planes of the form $\R \times \{p_c\} \times \R$, where $p_c \in \Pi_{\operatorname{Lag}}(\Lambda^\pm)$ is the double-point corresponding to a Reeb chord  $c \in \mathcal{Q}(\Lambda^\pm)$. In particular, these solutions satisfy
\[ \mathbf{c}'=\mathbf{d}'=0, \:\: b'=b, \:\: a'=b, \:\: \mathbf{d}^\epsilon=d^\epsilon,\]
are of index zero and, moreover, are transversely cut-out.
\end{Lem}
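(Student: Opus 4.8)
The plan is to analyze pseudo-holomorphic strips with boundary on a Legendrian cylinder $\R\times\Lambda^{\pm}$ together with a small Reeb push-off $\phi^\epsilon_{h_\pm}(\R\times\Lambda^{\pm})$, under the cylindrical lift $\widetilde{J}_P$, and show that all such strips of non-positive index are the ``trivial'' ones living in a single fiber plane $\R\times\{p_c\}\times\R$. The starting observation is that $\widetilde{J}_P$ makes the projection $\R\times P\times\R\to P$ holomorphic; hence the composition of any strip $u$ with this projection is a $J_P$-holomorphic disc $v$ with boundary on the immersed Lagrangian $\Pi_{\operatorname{Lag}}(\Lambda^{\pm})$ (traversed by both boundary arcs, since the push-off has the same Lagrangian projection). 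Because $\epsilon$ is smaller than the length of every Reeb chord, the incoming asymptotic at the positive puncture and the outgoing asymptotic at the negative puncture of $u$ both project to the \emph{same} double point $p_c$ of $\Pi_{\operatorname{Lag}}(\Lambda^\pm)$; this forces $a'=b$ (resp.\ the analogous equalities in the other two moduli spaces) and, by the energy formula \eqref{eq:energy2}, forces $\mathbf{c}'=\mathbf{d}'=0$ and $\mathbf{d}^\epsilon=d^\epsilon$, since any extra negative puncture would contribute strictly positive $\mathfrak{a}$-action that cannot be balanced.

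Next I would show that the projected disc $v$ must be constant. The action computation in \eqref{eq:energy2} shows that the $d\alpha$-energy of $u$ equals $\mathfrak a(a^\epsilon)-\mathfrak a(b)$ (or the analogue), which is exactly $\epsilon\,\ell(c)$ up to the primitive terms and can be made arbitrarily small by shrinking $\epsilon$; but the $d\alpha$-energy of $u$ equals the symplectic area of $v$ in $P$, which is bounded below by a positive constant (depending only on $\Lambda^\pm$ and $J_P$) for any \emph{non-constant} $J_P$-holomorphic disc with boundary on $\Pi_{\operatorname{Lag}}(\Lambda^\pm)$, by Gromov compactness / monotonicity. Hence for $\epsilon$ small $v$ is constant, equal to $p_c$, and $u$ is a strip contained entirely in the plane $\R\times\{p_c\}\times\R$. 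In that plane the Lagrangian boundary conditions become $\R\times\{$two points$\}$ and $\phi^\epsilon_{h_\pm}$ of the same, and one checks by hand (this is the model computation underlying the $\epsilon$-wrapping in \cite{Bilinearised, Lifting}) that there is a unique such strip up to reparametrization, it is embedded, and it has index zero.

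Finally I would address transversality of these model strips. Here I would invoke that the projected picture is essentially the Floer problem for a pair of transversally intersecting Lagrangian arcs in the one-dimensional plane $\R\times\{p_c\}\times\R\cong\C$ wrapped by a small Hamiltonian, whose unique strip is regular by an explicit linearized-operator computation (automatic transversality in complex dimension one, or the standard computation in \cite{Lifting}); regularity of $u$ as a curve in $\R\times P\times\R$ then follows from regularity of $J_P$ on $P$ together with regularity in the fiber, by the same splitting-of-the-linearized-operator argument used for $\mathcal{M}^{\widetilde{J}_P}_{\R\times\Lambda}(a;\mathbf{b})$ in \cite[Theorem 2.1]{Lifting}. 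The index being zero is then forced, consistently with the fact that these are the rigid configurations the gluing in Theorem \ref{thm:pushoff} requires. The main obstacle I anticipate is the monotonicity/area step: one must be careful that the constant $C>0$ bounding the area of non-constant projected discs from below is genuinely uniform as $\Lambda^\pm$ is fixed and $\epsilon\to 0$, i.e.\ that no area can leak into the fiber direction — but this is exactly controlled by \eqref{eq:energy2}, which shows all the $d\alpha$-energy is the $P$-area, so the argument closes.
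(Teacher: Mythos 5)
Your overall strategy (project to $P$ via the $(\widetilde{J}_P,J_P)$-holomorphic projection, reduce to the model strip in the plane $\R\times\{p_c\}\times\R$, and get regularity from automatic transversality in the fibre plus regularity of $J_P$, as in \cite[Lemma 8.3]{Lifting}) matches the paper's in its outer shape, but the central step --- showing the projection is constant --- is argued incorrectly. Your energy claim is false: the $d\alpha$-energy of a strip in, say, $\mathcal{M}^{\widetilde{J}_P}_{\R\times\Lambda^+,\phi^\epsilon_{h_+}(\R \times \Lambda^+)}(a^\epsilon;a)$ is $\mathfrak{a}(a^\epsilon)-\mathfrak{a}(a)$, which is of order $(e^M-e^{-M})\ell(a)$ plus $O(\epsilon)$ corrections, \emph{not} of order $\epsilon$; with the paper's definition $E(u)=\int u^*d(\overline{\varphi}\alpha)$ the energy also contains the $\overline{\varphi}'\,dt\wedge\alpha$ term and is therefore not equal to the symplectic area of the projection (for the vertical strips all the energy sits in that term and the projected area is zero). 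Even if you replace $E(u)$ by the projected area $\int u^*d\alpha$, that quantity is only $O(\epsilon)$ \emph{after} you already know the asymptotics are as claimed ($b'=a$, no extra punctures); for a hypothetical solution with a different mixed negative asymptotic $b'$ the projected area is of order $\ell(a)-\ell(b')$, a quantity independent of $\epsilon$, so shrinking $\epsilon$ against a fixed monotonicity constant proves nothing. Relatedly, your preliminary claims are also unjustified: smallness of $\epsilon$ does not force the positive and negative asymptotics to project to the same double point (that is a \emph{consequence} of constancy of the projection, not an input), and extra negative punctures cannot be excluded by positivity of action alone, since their actions can perfectly well be balanced against the positive asymptotic.

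The tell-tale sign is that your argument never uses the hypothesis that the solutions have non-positive index, yet the statement is false without it: there are plenty of positive-index curves in these moduli spaces with non-constant projection (their projections are honest $J_P$-discs with one positive puncture, of area bounded below, which is consistent with monotonicity and shows the energies cannot all be $O(\epsilon)$). The paper's proof instead uses the index directly: by \cite[Lemma 8.3]{Lifting} the projected $J_P$-holomorphic disc, with boundary on the exact Lagrangian immersion $\Pi_{\operatorname{Lag}}(\Lambda^\pm)$, has index one less than the original curve, hence index $\le -1$; regularity of $J_P$ (for discs with one positive puncture, which are simple) then forces the projection to be constant. Once that is known, the curve lies in the plane $\R\times\{p_c\}\times\R$ over the double point determined by its positive puncture, the asymptotics and the embedded strip are determined explicitly, the index is zero, and transversality follows from \cite[Lemma 8.3]{Lifting} --- this last part of your proposal is essentially correct, but it needs the constancy step repaired as above.
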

\begin{proof}
The canonical projection $\R \times P \times \R \to P$ is $(\widetilde{J}_P,J_P)$-holomorphic and projects the cylinder $\phi^\epsilon_{h_\pm}(\R \times \Lambda^\pm)$ to the Lagrangian projection $\Pi_{\operatorname{Lag}}(\Lambda^\pm) \subset P$. Any $\widetilde{J}_P$-holomorphic disc as above must thus project to a $J_P$-holomorphic disc in $P$ having boundary on $\Pi_{\operatorname{Lag}}(\Lambda^\pm)$. Moreover, since the index of the projected curve is one less than the index of the original curve by \cite[Lemma 8.3]{Lifting}, the regularity of $J_P$ implies that the projection must be constant.

The above solutions can thus be described explicitly as follows. For a Reeb chord $c$ on $\Lambda^\pm \cup \Lambda^\pm_\epsilon \subset P \times \R$, we write $p_c \in P$ for the unique point in its canonical projection to $P$. In other words, we have shown that a solution as above is contained inside the plane $\R \times \{p_c\} \times\R$, where $c$ denotes its positive puncture. It follows that the solution is an embedded strip having exactly one positive and one negative puncture.

Finally, the index calculation is standard, while the transversality was shown in \cite[Lemma 8.3]{Lifting}.
\end{proof}

\begin{Lem}
\label{lem:breaking}
Let $J$ be as in Theorem \ref{thm:pushoff}. Given a solution
\[u \colon (\R \times [0,1],\R \times \{0\},\R \times \{1\} \to (\R \times P \times \R,L,\phi^\epsilon_{h_N}(L'))\]
contained inside
\[\mathcal{M}^J_{L,\phi^\epsilon_{h_N}(L')}(a^\epsilon;\mathbf{c},b^\epsilon,\mathbf{d}^\epsilon),\]
where either $L'=L$ or $L'$ is a generic compactly supported perturbation of $L$, there is an a-priori non-zero lower bound on the $C^0$-norm of any path $s \mapsto u(t_0,s)$, given that $\epsilon>0$ is chosen sufficiently small. In particular, the usual SFT compactness theorem can be applied to these moduli spaces.
\end{Lem}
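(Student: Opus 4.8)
The plan is to show that the Floer strips in $\mathcal{M}^J_{L,\phi^\epsilon_{h_N}(L')}(a^\epsilon;\mathbf{c},b^\epsilon,\mathbf{d}^\epsilon)$ cannot degenerate in a way that is invisible to the Gromov--Hofer compactness machinery; the only potential pathology is that a slice $s \mapsto u(t_0,s)$ shrinks to a point, which would correspond to bubbling of a Floer strip at an intersection point of $L$ with $\phi^\epsilon_{h_N}(L')$ that is not accounted for among the Reeb-chord asymptotics. First I would observe that, since $L'$ is $C^1$-close to $L$ and agrees with it outside a compact set, the perturbed copy $\phi^\epsilon_{h_N}(L')$ is obtained from $L$ by a Hamiltonian isotopy whose size is controlled by $\epsilon$ together with the $C^1$-distance from $L'$ to $L$. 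Consequently the intersection points $L \cap \phi^\epsilon_{h_N}(L')$ all lie in a neighbourhood of $L$ of size $O(\epsilon)$, and their Floer action (in the sense of the potentials $f_1-f_0$ from Section \ref{sec:moduli}) is likewise $O(\epsilon)$.

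The key point is an \emph{action/energy lower bound}: by the monotonicity lemma for $J$-holomorphic curves with Lagrangian boundary (which applies here because $J$ is cylindrical outside a compact set and the relevant geometry has bounded geometry), any non-constant $J$-holomorphic strip, or any non-constant disc bubble with boundary on $L \cup \phi^\epsilon_{h_N}(L')$ that is not a trivial strip over a Reeb chord, has $d\alpha$-energy bounded below by a universal constant $\hbar>0$ depending only on $J$ and the geometry of $L$, but \emph{not} on $\epsilon$. On the other hand, formula \eqref{eq:energy2} computes the energy of any $u \in \mathcal{M}^J_{L,\phi^\epsilon_{h_N}(L')}(a^\epsilon;\mathbf{c},b^\epsilon,\mathbf{d}^\epsilon)$ as a difference of actions of the form $\mathfrak{a}(a^\epsilon)-\mathfrak{a}(b^\epsilon)-\sum\mathfrak{a}(c_i)-\sum\mathfrak{a}(d_i^\epsilon)$. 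Since $\mathfrak{a}(a^\epsilon) = \mathfrak{a}(a) + O(\epsilon)$ and likewise for the other chords, this energy is bounded above by the corresponding quantity for $u$'s limit building on $L$ plus $O(\epsilon)$; more importantly, any \emph{sub-building} obtained by Gromov--Hofer convergence carries only a fraction of this energy, and a slice $s \mapsto u(t_0,s)$ that converges to a point would produce, in the limit, a non-constant bubble component entirely trapped in an $O(\epsilon)$-thin region, forcing that bubble to have energy $O(\epsilon) < \hbar$ once $\epsilon$ is small. This contradiction gives the asserted a-priori positive lower bound on $\|s \mapsto u(t_0,s)\|_{C^0}$.

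With that bound in hand, the final sentence follows because the usual obstruction to applying SFT compactness in the presence of a thin neck --- namely, uncontrolled shrinking of strips causing loss of compactness at a hidden intersection point --- has been ruled out: the strips stay uniformly ``fat'', so after the biholomorphic reparametrisation $(t,y)\mapsto(f(t),y)$ identifying $J^\tau$-curves on $L \cup \phi^\epsilon_{h_1}(L')$ with $J$-curves on $L \cup \phi^\epsilon_{h_{N(\tau)}}(L')$, the Gromov--Hofer compactness theorem of Section \ref{sec:neckstretch} applies verbatim. The main obstacle I anticipate is making the monotonicity/energy-quantum argument uniform in $\epsilon$: one must be careful that the constant $\hbar$ genuinely does not degenerate as $L'\to L$ and $\epsilon\to 0$, which requires checking that the almost complex structures and Lagrangians involved have geometry bounded independently of these parameters --- true here since $L'$ ranges in a $C^1$-neighbourhood of the fixed $L$ and $\phi^\epsilon_{h_N}$ is generated by the fixed-type Hamiltonian $e^t\rho_N(t)$ --- and that the $O(\epsilon)$ estimates on the actions $\mathfrak{a}(\cdot^\epsilon)$ are honestly linear in $\epsilon$, which follows from the explicit form of the Reeb flow on the contactisation (translation in $z$) together with the primitives $f_i$ varying continuously.
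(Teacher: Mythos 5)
Your overall picture---that the danger is the strip pinching at a point of $L \cap \phi^\epsilon_{h_N}(L')$, and that this should be excluded for $\epsilon$ small by playing the $O(\epsilon)$ action of intersection points against an energy bound---is the right one, and the estimate $|\mathfrak{a}(p)|\le C\epsilon$ is indeed the quantitative input the paper uses. However, your argument has a genuine gap at its quantitative core: the contradiction rests on a monotonicity-type energy quantum $\hbar>0$ for non-constant curves with boundary on $L\cup\phi^\epsilon_{h_N}(L')$ that is \emph{independent of $\epsilon$}. No such uniform quantum exists in this geometry. In the main case $L'=L$ the two boundary conditions literally coincide on the large region $\{|t|\le M+N\}$, and in the perturbed case they are $\epsilon$-close there; consequently strips joining two intersection points have energy $\mathfrak{a}(p)-\mathfrak{a}(q)=O(\epsilon)$ by \eqref{eq:energy2}, and the monotonicity constant for a pair of Lagrangians degenerates exactly as the pair comes together. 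Moreover, the component created by a hypothetical break at an intersection point is not ``trapped in an $O(\epsilon)$-thin region'': it carries negative asymptotics at the Reeb chords $\mathbf{c}$, $b^\epsilon$, $\mathbf{d}^\epsilon$, whose actions are bounded below independently of $\epsilon$. The effective contradiction is therefore not ``small energy versus a quantum'' but energy \emph{positivity}: by \eqref{eq:energy2}, the piece having its positive puncture at an intersection point and negative punctures at Reeb chords would have energy at most $C\epsilon$ minus a positive constant, hence negative for $\epsilon$ small---this is how the paper rules out the breaking.

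There is a second, more structural, gap: you invoke Gromov--Floer compactness to produce the limiting broken/bubbled configuration at an intersection point, but that is precisely what is unavailable here. The intersection $L\cap\phi^\epsilon_{h_N}(L)$ is neither transverse nor of Bott type along $I_{\operatorname{bad}}=\partial(L\cap\phi^\epsilon_{h_N}(L))$, and the paper explicitly flags that compactness has not been established for such degenerate boundary-value problems. Its proof circumvents this by projecting $u$ on $u^{-1}\{|t|\ge M\}$ to $P$, where the boundary condition $\Pi_{\operatorname{Lag}}(\Lambda^\pm)$ \emph{is} generic (the bad locus sits in the cylindrical ends and the almost complex structure there is the cylindrical lift $\widetilde{J}_P$), applying compactness to the projection, and then observing that on $\{|t|\ge M\}$ the $d\alpha$-energy is computed from the $P$-component alone, so formula \eqref{eq:energy2} still applies to the partially-defined limit pieces and yields the negative-energy contradiction. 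Without this projection step (or some substitute dealing with the degenerate intersection locus), your argument does not get to the point where an energy contradiction can be run, so as written the proposal does not establish the lemma.
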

\begin{proof}
The difficulty lies in the fact that these boundary-value problems are not generic; for instance, the intersections $L \cap \phi^\epsilon_{h_N}(L)$ are not transverse and, even worse, the intersections $I_{\operatorname{bad}}:=\partial(L \cap \phi^\epsilon_{h_N}(L))$ are not even of Bott-type. Since Gromov-Floer compactness has not yet been established for intersections of this degenerate form, we have to argue as follows. We restrict attention to the case $L'=L$, the argument is similar in the other cases.

The projection of any solution $u \in \mathcal{M}^J_{L,\phi^\epsilon_{h_N}(L)}(a^\epsilon;\mathbf{c},b^\epsilon,\mathbf{d}^\epsilon)$ to $P$ restricted to $u^{-1}\{ |t| \ge M \}$ defines a $J_P$-holomorphic curve having boundary on $\Pi_{\operatorname{Lag}}(\Lambda^\pm)$ by the choice of almost complex structure. Observe that the projected boundary-condition is in fact generic.

We can thus apply the Gromov-Hofer-Floer compactness theorem to $u$, with the caveat that, for a hypothetical Floer breaking occurring near $I_{\operatorname{bad}} \subset \{ |t| \ge M \}$, the end of the strip might only have a well-defined $P$-component (for which the compactness theorem can be applied).

Luckily, the above strips (parts of which a priori only have a well-defined $P$-component) can be seen to still satisfy Formula \eqref{eq:energy2}. This enables us to exclude Floer-breakings at intersection points all together. The reason is that the definition of the $d\alpha$-energy of a curve is computed entirely in terms its projection to $P$ outside of $\{ |t| \ge M \}$. If a breaking occurred at  an intersection point, we would now be able to find a strip (which might partly only have a well-defined $P$-component) having its negative punctures asymptotic to Reeb chords while its positive puncture is mapped to an intersection point. However, such a strip would necessarily have negative $d\alpha$-energy, given that $\epsilon>0$ is sufficiently small. The latter fact can be seen by applying Formula \eqref{eq:energy2} together with estimate $|\mathfrak{a}(p)| \le C\epsilon$ which holds for some constant $C>0$ depending only on $L$.

The above contradiction shows that no Floer breaking is possible. The claimed $C^0$-bound now follows.
\end{proof}

\section{The proof of Theorem \ref{thm:main}}
\label{sec:proof-theor-refthm:m}
Let $(P, d \theta)$ be a Liouville manifold and $(P \times \R, dz + \theta)$ its contactisation.
We let $V_i \subset \R \times P \times \R$, $i=0,1$, denote the exact fillings of the Legendrian submanifold $\Lambda^- \subset P \times \R$ inducing the augmentations $\varepsilon_i$, $i=0,1$, and $L  \subset \R \times P \times \R$ the exact Lagrangian concordance from $\Lambda^-$ to the Legendrian submanifold $\Lambda^+\subset P \times \R$. We write $V_i \odot L$ for the exact Lagrangian filling of $\Lambda^+$ induced by concatenating the filling $V_i$ with the concordance $L$. More precisely, after a translation we may assume that
\begin{gather*}
V_i \cap \{ t \ge -1\} = [-1,+\infty) \times \Lambda^-,\\
L \cap \{ t \le 1 \} = (-\infty,-1] \times \Lambda^-.
\end{gather*}
and we define $V_i \odot L$ to be the exact Lagrangian submanifold which coincides with $V_i$ on $\{ t \le 0\}$ and with $L$ on $\{ t \ge 0\}$.

The assumption that $L$ is a concordance implies that the restriction $e^t|_L$ can be modified inside a compact set to a smooth function without singular points. For an appropriate extension of this function to a Hamiltonian $H \colon \R \times P \times \R \to \R$, the corresponding Hamiltonian isotopy $\phi^t_H$ can be made to satisfy
\begin{itemize}
\item $\phi^t_H$ is the flow of the Reeb vector field $\partial_z$ on the subset
\[ \{ -1 \le t \le 1\} \cup \{ t \ge N \} \subset \R \times P \times \R, \]
for some sufficiently large $N>0$; and
\item $\phi^\epsilon_H (L) \cap L \cap \{ t \ge -1\} =\emptyset$ for each $\epsilon>0$ sufficiently small.
\end{itemize}
Finally, we let $\phi^t_G$ be the Hamiltonian isotopy which coincides with $\phi^t_H$ on $\{ t \le 1\}$ while it is generated by $\partial_z$ on $\{t \ge -1\}$. We will compute the wrapped Floer homology complexes
\[(CW_\bullet(V_0 \odot L,\phi^\epsilon_H(V_1 \odot L)),d)\]
and
\[(CW_\bullet(V_0,\phi^\epsilon_G(V_1)),d')\]
as describe in Section \ref{wfh}.

Write $\Lambda^\pm_\epsilon$ for the time-$\epsilon$ Reeb flow of $\Lambda^\pm$ (i.e. translation by $\epsilon$ in the $z$-coordinate). Observe that there is a canonical bijection between the Reeb chords starting on $\Lambda^\pm_\epsilon$ and ending on $\Lambda^\pm$ and the Reeb chords on $(\Lambda^\pm)$ given that $\epsilon>0$ is chosen sufficiently small. By the definition of the wrapped Floer homology complex, we thus have the decomposition
\begin{gather*}
CW_\bullet(V_0,\phi^\epsilon_G(V_1))=LCC^{\bullet-1}(\Lambda^-)\oplus \Z_2(V_0 \cap \phi^\epsilon_G(V_1)) ,\\
d'=\begin{pmatrix} d'_\infty & \delta' \\
0& d'_0
\end{pmatrix}.\end{gather*}
Using Proposition \ref{prop:identification} together with Theorem \ref{thm:pushoff}, assuming that we have chosen the compatible almost complex structure in the definition of $d'$ appropriately, we conclude that
\[d'_\infty=d_{\varepsilon_0,\varepsilon_1} \]
is the bilinearised Legendrian contact cohomology differential for $LCC^\bullet_{\varepsilon_0,\varepsilon_1}(\Lambda^-)$. Recall that $\varepsilon_i$ is the augmentation induced by $V_i$, $i=0,1$.

By the construction of $\phi^t_H$ it follows that
\[V_0 \odot L \: \cap \: \phi^\epsilon_H(V_1 \odot L)=V_0 \cap \phi^\epsilon_G(V_1),\]
from which we get the decomposition
\begin{gather*}
CW_\bullet(V_0 \odot L,\phi^\epsilon_H(V_1 \odot L))=LCC^{\bullet-1}(\Lambda^+)\oplus \Z_2(V_0 \cap \phi^\epsilon_G(V_1)) ,\\
d=\begin{pmatrix} d_\infty & \delta \\
0& d_0
\end{pmatrix}.\end{gather*}
Again, after using Proposition \ref{prop:identification} together with Theorem \ref{thm:pushoff}, we may assume that
\[d_\infty=d_{\varepsilon_0 \circ \Phi_L,\varepsilon_1  \circ \Phi_L} \]
is the bilinearised Legendrian contact cohomology differential on
\[LCC^\bullet_{\varepsilon_0 \circ \Phi_L,\varepsilon_1  \circ \Phi_L}(\Lambda^+).\]

After stretching the neck around the hypersurface $\{ t =0\} \subset \R \times P \times \R$ of contact type as in Section \ref{sec:neckstretch}, we can moreover deduce that $d$ is of the following form. First, since there are no pseudo-holomorphic discs with boundary on an embedded exact Lagrangian cobordism without positive punctures, we may assume that
\[ d_0=d'_0.\]
Second, using Theorem \ref{thm:pushoff} together with an analysis of the possible broken curves under the above neck-stretching sequence, we deduce that there is a factorisation
\begin{equation}
\label{eq:factor}
\delta=\delta' \circ \Phi^{\varepsilon_0,\varepsilon_1}_L,
\end{equation}
where we recall that both $\delta$ and $\delta'$ are chain maps.

\begin{Rem}The factorisation in Formula \eqref{eq:factor} is a special case of the so-called transfer map in wrapped Floer homology. We refer to \cite[Section 4.2.2]{RationalSFT2} and \cite[Section 5.2]{Lifting} for more details.
\end{Rem}

Finally, the invariance properties of wrapped Floer homology, see Proposition \ref{prop:invariance}, implies that the chain maps
\begin{gather*} \delta \colon (\Z_2(V_0 \cap \phi^\epsilon_G(V_1)),d'_0) \to (LCC^\bullet_{\varepsilon_0 \circ \Phi_L,\varepsilon_1  \circ \Phi_L}(\Lambda^+),d_{\varepsilon_0 \circ \Phi_L,\varepsilon_1  \circ \Phi_L}) \\
\delta' \colon (\Z_2(V_0 \cap \phi^\epsilon_G(V_1)),d'_0) \to (LCC^\bullet_{\varepsilon_0 ,\varepsilon_1  }(\Lambda^-),d_{\varepsilon_0 ,\varepsilon_1})
\end{gather*}
both are quasi-isomorphisms. By the factorisation \eqref{eq:factor} it thus follows that the chain map
\[  \Phi^{\varepsilon_0,\varepsilon_1}_L \colon (LCC^\bullet_{\varepsilon_0,\varepsilon_1}(\Lambda^-),d_{\varepsilon_0,\varepsilon_1})\to (LCC^\bullet_{\varepsilon_0 \circ \Phi_L,\varepsilon_1  \circ \Phi_L}(\Lambda^+),d_{\varepsilon_0 \circ \Phi_L,\varepsilon_1  \circ \Phi_L})\]
is a quasi-isomorphism as well.
\qed

\section{Lagrangian concordances of the Legendrian unknot}\label{sec:lagr-conc-legendr}
\subsection{Classification of two-dimensional concordances}
The Legendrian unknot $\Lambda_0  \subset (S^3,\xi_{st})$ of ${\tt tb}(\Lambda)=-1$ can be represented as the intersection of $S^3 \subset \C^2$ with the real part $\R^2$ of $\C^2$. Since $\C^2 \setminus \{0\}$ is symplectomorphic to the symplectisation of $(S^3,\xi_{st})$, slightly deforming the Lagrangian plane $\R^2\subset\C^2$ in a neighbourhood of the origin produces an exact Lagrangian filling of $\Lambda_0$ inside the symplectisation of $(S^3, \xi_{st})$. Eliashberg and Polterovich proved the following strong classification result.
\begin{Thm}[Eliahberg-Polterovich \cite{LocalLagrangian}]
\label{thm:eliashbergpolterovich}
There is a unique exact orientable Lagrangian filling of $\Lambda_0 \subset (S^3,\xi_{st})$ in $\C^2$ up to compactly supported Hamiltonian isotopy. In particular, such an exact Lagrangian filling is a plane (and hence its compact part is a disc).
\end{Thm}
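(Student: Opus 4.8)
I would establish Theorem~\ref{thm:eliashbergpolterovich}, which is due to Eliashberg and Polterovich \cite{LocalLagrangian}, by the following three-step strategy. The first step is a topological reduction. Let $L\subset\C^2\setminus\{0\}$ be an exact orientable Lagrangian filling of $\Lambda_0$. Since $\C^2\setminus\{0\}$ contains no closed exact Lagrangian submanifold (Gromov, \cite{Gromov}), every component of $L$ meets the cylindrical end; that end is the connected Legendrian $\Lambda_0$, so $L$ is connected. The relation \eqref{eq:tbeq} gives $\chi(L)=-{\tt tb}(\Lambda_0)=1$, and a connected orientable surface with one boundary circle and Euler characteristic one has genus zero, so $L$ is a plane with compact part a disc. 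Being cylindrical over $\Lambda_0=\R^2\cap S^3$ moreover forces $L$ to coincide with the linear plane $\R^2$ outside a compact subset of $\C^2$ while remaining bounded away from the origin; thus $L$ can be regarded as a properly embedded Lagrangian plane in $(\C^2,d\theta_0)$ that equals $\R^2$ outside a compact set. It then suffices to show that any such plane is carried to $\R^2$ by a compactly supported Hamiltonian diffeomorphism; applied to two fillings, this gives the asserted uniqueness.

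The heart of the matter is the second step, which produces this symplectomorphism via Gromov's method of filling by pseudo-holomorphic discs. Fix a compatible almost complex structure $J$ on $\C^2$ that is standard outside a compact set, and compactify $\C^2$ to $S^2\times S^2$ by a symplectic divisor at infinity with respect to which $\R^2$ closes up to the standard Lagrangian torus (the product of equators) and $L$ to a Lagrangian torus agreeing with it near the divisor. One then studies the $J$-holomorphic discs with boundary on the closure of $L$ in the Maslov-two classes carried by the standard hemisphere discs attached to the model torus. Exactness of $L$ rules out sphere bubbling, the non-existence of closed exact Lagrangians rules out escape of energy into the ends, and the embedded Maslov-two discs on a Lagrangian surface in a four-manifold are automatically regular, so these moduli spaces and their Gromov--Hofer compactifications are identified with the ones for the model. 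The corresponding disc families sweep out $\C^2$ and interpolate to the standard configuration near infinity; recording, for a point, the discs through it together with a marked point on their boundary yields a diffeomorphism of $\C^2$ that is the identity near infinity and intertwines the two configurations, and a Moser-type normalization upgrades it to a symplectomorphism taking $L$ to $\R^2$.

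The third step is soft: $\operatorname{Symp}_c(\R^4)$ is connected (Gromov) and $H^1_c(\R^4)=0$, so the symplectomorphism just produced is joined to the identity through compactly supported Hamiltonian diffeomorphisms; composing, $L$ is carried to $\R^2$ by a compactly supported Hamiltonian isotopy, as claimed. I expect the analytic control of the disc moduli spaces in the second step to be the main obstacle: one must exclude, uniformly along each family, every sphere and boundary degeneration; control the behaviour of the curves along the divisor at infinity so that the evaluation maps are proper and the families genuinely sweep out the expected regions; and check that the resulting configuration agrees near infinity with the one for $\R^2$. This is exactly where exactness, orientability, and standardness at infinity all enter.
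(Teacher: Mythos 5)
First, a point of comparison: the paper does not prove this statement at all --- it is imported wholesale from Eliashberg--Polterovich \cite{LocalLagrangian} and used as a black box (e.g.\ in the proof of Theorem \ref{thm:concisotopy}). So your proposal has to be measured against that external proof, not against anything in the text. Your first step is fine and matches how the paper itself passes between the two pictures: a component of $L$ disjoint from the end would be a closed exact Lagrangian in $\C^2$, which Gromov excludes, so $L$ is connected; formula \eqref{eq:tbeq} with $\Lambda^-=\emptyset$ and ${\tt tb}(\Lambda_0)=-1$ gives $\chi(L)=1$, hence a disc; and cylindricity over $\Lambda_0=S^3\cap\R^2$ means $L$ agrees with the cone over $\Lambda_0$, i.e.\ with $\R^2$, outside a compact set and avoids the origin. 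Your third step (contractibility of $\operatorname{Symp}^c(\R^4)$ by Gromov, plus $H^1_c(\R^4)=0$ to upgrade a compactly supported symplectic isotopy to a Hamiltonian one) is also correct and standard.

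The genuine gap is your second step: what you call the heart of the matter is not an argument but a compressed restatement of the theorem being proved, which is precisely Eliashberg--Polterovich's ``local Lagrangian $2$-knots are trivial''. The assertion that the moduli spaces of Maslov-two discs with boundary on the compactified torus ``are identified with the ones for the model'' is exactly the content that has to be established for the \emph{unknown} filling: automatic regularity and the exclusion of bubbling do not by themselves give non-emptiness of these moduli spaces, properness and degree one of the evaluation maps, or the mutual transversality of the two disc families needed so that recording ``the discs through a point'' actually defines a map, let alone a diffeomorphism intertwining the two configurations; and the concluding ``Moser-type normalization'' taking $L$ to $\R^2$ is itself a nontrivial step, since Moser arguments move forms, not Lagrangians, and one must first produce a symplectic (or at least fibered) normal form around the disc foliation. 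These are the points where all the work in \cite{LocalLagrangian} (and in the later literature on Hamiltonian unknottedness of tori in $S^2\times S^2$, where even the monotone, fibered case requires substantial effort) is concentrated. Two smaller inaccuracies in the same step: exactness of $L$ only controls discs contained in $\C^2$, so sphere bubbling and degenerations through the divisor at infinity must instead be ruled out by positivity of intersection and Chern/Maslov number accounting in $S^2\times S^2$; and one should verify that the closure of $L$ is a smooth Lagrangian torus with the same monotonicity constant as the product of equators (this does follow from exactness of $L$ and standardness at infinity, but it is an assumption you use implicitly). As an outline your strategy is consistent in spirit with the holomorphic-curve approach of Eliashberg--Polterovich, but as written it assumes the crux rather than proving it.
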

\begin{Rem}
Ritter has shown that every exact filling of $\Lambda_0 \subset (S^3,\xi_{st})$ is orientable \cite[Corollary 14]{RitterNovikov}.
\end{Rem}

We will use this fact to prove a classification of exact Lagrangian cobordisms from $\Lambda_0$ to $\Lambda_0$.

First, let $\rho \colon \R_{\ge 0} \to \R_{\ge 0}$ be a smooth function that vanishes in a neighbourhood of the origin  and satisfies $\rho'(t) \ge 0$, while $\rho(t)=1$ holds outside of a compact set. Consider the (non-compactly supported) diffeomorphism
\begin{gather*}\phi \colon \C^2 \setminus \{0\} \to \C^2 \setminus \{ 0\},\\
(z_1,z_2) \mapsto (e^{i \pi \rho( \|\mathbf{z}\|) }z_1,z_2).
\end{gather*}

\begin{Lem}
\label{lem:wrap}
The map $\phi$ is a symplectomorphism of $\C^2 \setminus \{0\}$ which fixes $\R \times \Lambda_0 \subset \R \times S^3 \simeq \C^2 \setminus \{0\}$ set-wise outside of a compact set, and whose even powers have compact support. Moreover, $\phi^l(\R \times \Lambda_0)$ and $\phi^m(\R \times \Lambda_0)$, $m,l \in \Z$, are compactly supported smoothly isotopic if and only if $l$ and $m$ have the same parity. Finally, $\phi^l(\R \times \Lambda_0)$ and $\phi^m(\R \times \Lambda_0)$ are compactly supported Hamiltonian isotopic if and only if $l=m \in \Z$.
\end{Lem}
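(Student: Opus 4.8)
I would establish first the elementary facts, then the smooth classification, and finally the Hamiltonian one. That $\phi$ preserves $d\theta_0$ is a direct verification (if the stated formula fails to do so on the nose, note that $\phi^{*}d\theta_0-d\theta_0$ is exact and supported in $\{\epsilon\le\|\mathbf z\|\le R\}$, so a Moser isotopy supported there replaces $\phi$ by a symplectomorphism with the same behaviour outside that region; I keep the name $\phi$). Where $\rho\equiv 0$ (near $0$) and where $\rho\equiv 1$ (outside a compact set), the rotation angle $2k\pi\rho$ of $\phi^{2k}$ is a multiple of $2\pi$, so $\phi^{2k}=\operatorname{Id}$ there; hence $\phi^{2k}$ is compactly supported. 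On $\{\rho\equiv 1\}$ the map $\phi$ is $(z_1,z_2)\mapsto(-z_1,z_2)$ and $-\R^2=\R^2$, so $\phi$ preserves $\R\times\Lambda_0=\R^2\setminus\{0\}$ there; consequently each $\phi^l(\R\times\Lambda_0)$ is a properly embedded exact Lagrangian coinciding with $\R\times\Lambda_0$ outside $\{\epsilon\le\|\mathbf z\|\le R\}$, and all the isotopy assertions below are meaningful.

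For the smooth classification, observe that (since $\phi^l=\operatorname{Id}$ near $0$ too) the submanifolds $\phi^l(\R\times\Lambda_0)$ differ only over $A=\{\epsilon\le\|\mathbf z\|\le R\}\cong[\epsilon,R]\times S^3$, rel the two boundary copies of $\Lambda_0$, where $\phi^l(\R\times\Lambda_0)$ is the surface swept out by the circles $R^{z_1}_{l\pi\rho(r)}(\Lambda_0)\subset S^3$. I would detect the difference by the tangent-plane Gauss map into the Grassmannian $\mathrm{Gr}_2(\R^4)$ of $2$-planes in $\R^4$: along the radial slices it traces the loop $\theta\in[0,l\pi]\mapsto R^{z_1}_\theta\langle e_1,e_3\rangle$, i.e. the class $\gamma$ of the half-rotation traversed $l$ times; and $\mathrm{Gr}_2(\R^4)$, being doubly covered by $S^2\times S^2$, has $\pi_1=\Z/2$, with $\gamma$ its nontrivial element (the tautological $2$-plane bundle over $\gamma$ is non-orientable, since $R^{z_1}_\pi$ restricts to an orientation-reversing involution of $\Lambda_0$). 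As the homotopy class of this Gauss map relative to the common germ at infinity is unchanged by compactly supported isotopies, $\phi^l(\R\times\Lambda_0)\simeq\phi^m(\R\times\Lambda_0)$ forces $l\equiv m\pmod 2$. Conversely, applying the diffeomorphism $\phi^{-m}$ reduces the claim to $m=0$ and $l=2j$ even; then the radial-slice loop is $\gamma$ traversed $2j$ times, hence null-homotopic in $\mathrm{Gr}_2(\R^4)$, and filling in a null-homotopy through great circles deforms the surface over $A$ to the trivial cylinder $[\epsilon,R]\times\Lambda_0$, which extended by the identity gives the isotopy.

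For the Hamiltonian classification the "if" direction is trivial, so the content is that $\phi^l(\R\times\Lambda_0)$ and $\phi^m(\R\times\Lambda_0)$ are not compactly supported Hamiltonian isotopic when $l\ne m$. I would detect this with a Maslov grading-shift invariant: since $\Lambda_0$ has vanishing rotation number, any Lagrangian concordance $C$ from $\Lambda_0$ to $\Lambda_0$ has vanishing Maslov class, hence a global Maslov potential, and comparing its restrictions to the two ends with the fixed potential of $\Lambda_0$ defines an integer $s(C)$ which is additive under concatenation, vanishes on $\R\times\Lambda_0$, and — because a compactly supported Hamiltonian isotopy is the identity near the ends — is invariant under such isotopies. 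The half-rotation of the $z_1$-coordinate winds the Lagrangian phase of $\R\times\Lambda_0$ by one full turn across the twisting region, so $s(\phi^l(\R\times\Lambda_0))$ is a fixed nonzero multiple of $l$; hence distinct $l$ give non-isotopic Lagrangians. (Equivalently, this is the degree shift of the Floer homology $HF^{\bullet}(\phi^l(\R\times\Lambda_0),V)$ against the standard plane filling $V$ of $\Lambda_0$, which by invariance of Floer homology is a compactly-supported-Hamiltonian invariant.) That $s$ is not a smooth invariant is consistent with the mod-$2$ statement above.

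The main obstacle is the Hamiltonian part: one must make sure that the grading-shift invariant $s$ — equivalently, the $\Z$-graded Floer homology of these non-compact two-ended Lagrangians — is genuinely well defined and genuinely preserved by compactly supported Hamiltonian isotopy, since this is precisely what sharpens the coarse smooth answer ($l\equiv m\bmod 2$) to the symplectic one ($l=m$); this is where the argument should engage the wrapped Floer homology package recalled above, together with the uniqueness-of-filling input of Theorem~\ref{thm:eliashbergpolterovich}. A secondary technical point is the general-position argument that the $\pi_1(\mathrm{Gr}_2(\R^4))$-valued obstruction is detected by arbitrary (not merely product-form) isotopies, together with the Moser correction making $\phi$ literally symplectic.
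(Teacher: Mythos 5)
Your proposal is correct and follows essentially the same route as the paper: the symplecticity and compact-support claims are handled by elementary means (the paper simply notes that $\phi$ preserves each sphere $S^3_R$, acting on it by an element of $U(2)$ -- your Moser-type correction is, if anything, the more careful treatment of this point), the parity statement is detected exactly as you do via families of tangent/oriented two-planes and $\pi_1(Gr_2(\R^4)) \simeq \Z_2$, and the Hamiltonian rigidity via a Maslov-type invariant. The one place where you overestimate the difficulty is the final step: the paper does not invoke wrapped Floer homology or Theorem \ref{thm:eliashbergpolterovich} here, but simply computes the Maslov number of the path $[-N-1,N+1] \ni t \mapsto \phi^m(t,y) \subset \phi^m(\R \times \Lambda_0)$, which equals $m$ and is invariant under Hamiltonian isotopies supported in $\{|t| \le N\}$ because the Lagrangian is left untouched near the endpoints of the path -- this is precisely your potential-difference invariant $s$, and its well-definedness and invariance are elementary, no Floer-theoretic package required.
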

\begin{proof}
Observe that $U(2)$ acts on $(S^3,\alpha_{st})$ preserving the contact form. The first claim follows since $\phi$ preserves the concentric spheres $S^3_R \subset \C^2$ of radius $R$ set-wise, while it acts on each $S^3_R$ by an element in $U(2)$.

The claim concerning the smooth isotopy can be seen as follows. First, observe that the images of $\R \times \Lambda_0$ under powers of $\phi$ are given as the trace of a one-parameter family of smooth embeddings of the oriented unknot inside $S^3$ which, after a smooth isotopy, may be assumed to be induced by a one-parameter family of oriented two-planes inside $\R^3$ where the knot is the unit-circle in the two-plane. The statement follows from the fact that $\pi_1(Gr_2(\R^4)) \simeq \Z_2$ (see e.g.~\cite[10.8.C]{TopologyII}).

For the last statement, it suffices to compute the Maslov number of the path
\[[-N-1,N+1] \ni t \mapsto \phi^m(t,y) \subset \phi^m(\R \times \Lambda_0),\]
for some fixed point $y \in \Lambda_0$. This number, which can be computed to be equal to $m$, is invariant under Hamiltonian isotopies supported in the set $\{ |t| \le N \}$.
\end{proof}

\begin{Thm}
\label{thm:concisotopy}
Let $\Lambda_0 \subset (S^3, \xi_{st})$ be a Legendrian unknot with ${\tt tb}(\Lambda_0)=-1$, and let $L$ be an exact Lagrangian cobordism from $\Lambda$ to itself inside the symplectisation of $(S^3, \xi_{st})$. It follows that $L$ is compactly supported Hamiltonian isotopic to $\phi^m(\R \times \Lambda_0)$ for some $m \in \Z$.
\end{Thm}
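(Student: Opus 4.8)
The plan is to argue in three steps: first, that $L$ is necessarily a concordance; second, to attach to $L$ a Maslov-type integer and, by a concatenation trick, reduce to the case in which this integer vanishes; and third, in that case, to promote the Eliashberg--Polterovich isotopy of the capped-off filling to one which leaves the cap alone. The third step carries all the content; the first two are soft.

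First I would cap off the negative end of $L$ with the standard Lagrangian disc $D$ filling $\Lambda_0$, forming the exact Lagrangian filling $W := D \odot L$ of $\Lambda_0$ inside the symplectisation of $(S^3,\xi_{st})$, as in the proof of Theorem~\ref{thm:main}. By \cite[Corollary~14]{RitterNovikov} the filling $W$ is orientable, so Theorem~\ref{thm:eliashbergpolterovich} applies: there is a compactly supported Hamiltonian isotopy $\Psi_s$ of $\C^2 \setminus \{0\}$ with $\Psi_0 = \operatorname{Id}$ and $\Psi_1(W) = D_0$, where $D_0 := D \odot (\R \times \Lambda_0)$ is the standard disc filling after the obvious identification; in particular the compact part $\overline{W}$ is a disc. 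Since $\overline{W} = \overline{D} \cup_{\Lambda_0} \overline{L}$ with $\overline{D}$ a disc, it follows that $\overline{L}$ is connected of Euler characteristic $0$ with two boundary circles, hence an annulus, so $L$ is an exact Lagrangian concordance. (This conclusion can alternatively be quoted from \cite[Theorem~6.1]{LagrConcordNotASymmRel}.)

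Second I would attach to the concordance $L$ the integer $m(L)$, defined exactly as in the last paragraph of the proof of Lemma~\ref{lem:wrap}, namely the Maslov number of a path in $\overline{L}$ from the negative to the positive Legendrian end computed against the reference framings of the two cylindrical ends. It is independent of the path, invariant under compactly supported Hamiltonian isotopy, additive under concatenation of concordances, and, by Lemma~\ref{lem:wrap}, equal to $m$ on $\phi^{m}(\R \times \Lambda_0)$. Since $\phi^{-m}(\R \times \Lambda_0) \odot \phi^{m}(\R \times \Lambda_0)$ is the trace of a contractible loop of Legendrian unknots, hence compactly supported Hamiltonian isotopic to the trivial cylinder $\R \times \Lambda_0$, which acts as a unit for concatenation up to such isotopy, the concordance $L$ is compactly supported Hamiltonian isotopic to $\phi^{m(L)}(\R \times \Lambda_0)$ as soon as the concordance $L \odot \phi^{-m(L)}(\R \times \Lambda_0)$ --- whose integer vanishes by additivity --- is compactly supported Hamiltonian isotopic to $\R \times \Lambda_0 = \phi^{0}(\R \times \Lambda_0)$. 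Thus it suffices to treat a concordance $L$ with $m(L) = 0$.

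Third, and this is the crux, suppose $m(L) = 0$ and return to the isotopy $\Psi_s$ from $W = D \odot L$ to $D_0 = D \odot (\R \times \Lambda_0)$. These two fillings coincide on $\{ t \le 0 \}$, which contains the cap, and differ only in a compact region carrying $\overline{L}$, where $L$ and $\R \times \Lambda_0$ also differ. The aim is to replace $\Psi_s$ by a compactly supported Hamiltonian isotopy supported in $\{ t \ge 0 \}$, that is, one not disturbing the cap; restricting such an isotopy to $\{ t \ge 0 \}$ and extending it by the identity on $\{ t \le 0 \}$ (where $L$ and $\R \times \Lambda_0$ agree) would then exhibit $L$ as compactly supported Hamiltonian isotopic to $\R \times \Lambda_0$, finishing the proof. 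The obstruction to this modification is a flux/Maslov-type quantity, governed by the Maslov loops in the space of Lagrangian planes in $\C^2$ (equivalently, by the fundamental group of the space of exact Lagrangian fillings of $\Lambda_0$, which by Theorem~\ref{thm:eliashbergpolterovich} is connected up to compactly supported Hamiltonian isotopy but carries a $\Z$ worth of such internal loops), and one computes it to be a nonzero multiple of $m(L)$; hence $m(L) = 0$ makes it vanish. Making this last step rigorous --- pinning down the obstruction, identifying it with $m(L)$, and carrying out the subtraction of the cap --- is the main difficulty, and amounts to a version of the Eliashberg--Polterovich uniqueness theorem relative to the region supporting the cap; everything else in the argument is formal.
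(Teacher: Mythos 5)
Your first two steps are sound and essentially match the paper's starting point: cap off with the standard disc, invoke Theorem \ref{thm:eliashbergpolterovich} (together with Ritter's orientability result) to obtain a compactly supported Hamiltonian isotopy carrying the capped filling to the standard plane, and conclude that $\overline{L}$ is an annulus. The reduction in your second step can also be made rigorous (invariance and additivity of your Maslov-type integer are fine, and the contractibility argument for $\phi^{-m}\odot\phi^{m}$ can be replaced by an explicit interpolation of rotation profiles). The problem is your third step, which you yourself identify as carrying all the content: the ``relative Eliashberg--Polterovich theorem'' you invoke --- that the global compactly supported Hamiltonian isotopy can be replaced by one supported away from the cap, with obstruction a well-defined class equal to a nonzero multiple of $m(L)$ --- is not a result you can quote, and it is precisely what has to be proved. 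You neither specify in which group the obstruction lives and why it depends only on the data at hand, nor prove the identification with $m(L)$; asserting that it is ``a nonzero multiple of $m(L)$'' is essentially a restatement of the theorem, so this is a genuine gap rather than a routine completion.

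The paper's proof fills exactly this hole, and in a way that makes your reduction to $m(L)=0$ unnecessary. It first deforms the Eliashberg--Polterovich isotopy so that every time-$t$ map is \emph{linear} near the origin: Lemma \ref{lem:makelinearlag} corrects the time-one map along the standard disc, and Lemmas \ref{lem:flowpoints} and \ref{lem:makelinear} linearise the whole path of symplectomorphisms near $0$. Then the homotopy equivalence $U(2)\subset\operatorname{Sp}(4,\R)$ provides a loop $A_t$ of compactly supported symplectomorphisms, linear near the origin, after which the path is the explicit rotation $\operatorname{diag}(e^{i\pi m t},1)$ near $0$ for some $m\in\Z$; composing with the unwinding family $B_t$ built from Lemma \ref{lem:wrap} yields an isotopy that fixes a neighbourhood of the origin (hence the cap) and whose time-one map carries $L$ to $\phi^{\pm m}(\R\times\Lambda_0)$. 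In this argument the integer $m$ --- your $m(L)$ --- appears as the residual winding of the linearised isotopy at the origin rather than as a separately defined invariant whose vanishing must first be arranged. To complete your outline you would have to reproduce essentially this chain of lemmas, at which point the concatenation reduction of your second step becomes superfluous.
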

\begin{Rem}
\begin{enumerate}
\item\label{rem:unknot} The space of smooth unparametrised embeddings of the unknot in $S^3$ is homotopy equivalent to $Gr_2(\R^4)$ as follows by the positive answer of the Smale conjecture by Hatcher \cite[Appendix]{SmaleConjecture}, i.e.~the fact that the inclusion $O(4) \subset \operatorname{Diff}(S^3)$ is a homotopy equivalence. In other words, since $\pi_1(Gr_2(\R^4)) \simeq \Z_2$ (see e.g.~\cite[10.8.C]{TopologyII}), the space of \emph{unparametrised oriented} unknots is simply connected. Theorem \ref{thm:concisotopy} suggests that the fundamental group of unparametrised oriented Legendrian knots is isomorphic to $\Z$. Recall that a Legendrian isotopy induces a Lagrangian concordance by \cite{LagrConc}. Finally, note that this question is related to a result by Sp{\'a}{\v{c}}il, which states that the inclusion $U(2) \subset \operatorname{Cont}(S^3;\xi_{st})$ is a homotopy equivalence. On Figure \ref{fig:non_triv_loop} we show the Lagrangian and front projection of the isotopy inducing the cylinder $\phi(\R \times \Lambda_0)$, this is a non-trivial loop of Legendrian submanifold which can be distinguished using the Maslov class.
\item Using the non-contractible loops of the non-trivial torus knots produced in \cite{OneParamKnots}, one can produce plenty of examples of exact Lagrangian concordances from a non-trivial torus-knot to itself which are in different compactly supported Hamiltonian isotopy classes but which cannot be distinguished by the Maslov class nor by any smooth invariant.
\end{enumerate}
\end{Rem}

\begin{figure}[ht!]
  \centering
  \includegraphics[width=\textwidth]{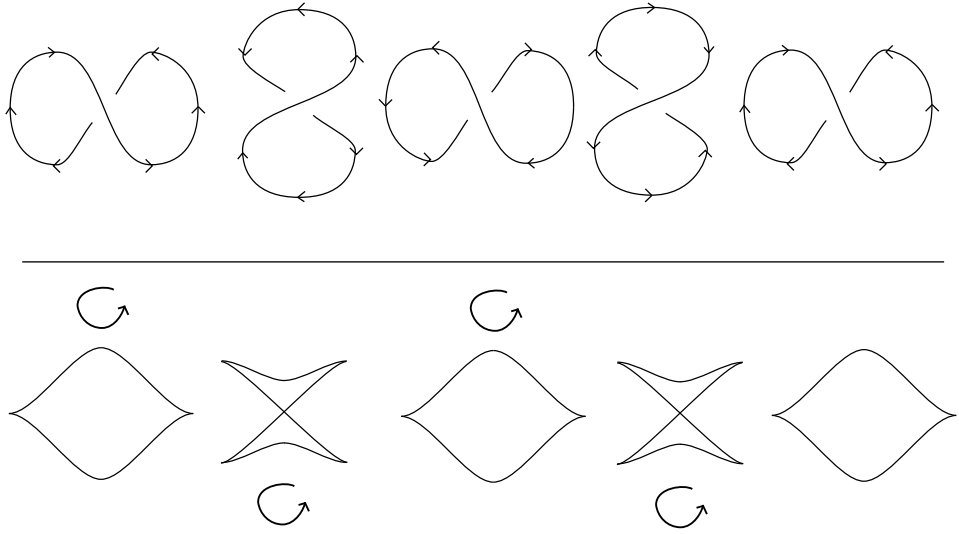}
  \caption{Lagrangian (at the top) and front (at the bottom) projections of a non-trivial loop of Legendrian unknots induced by the $2\pi$-rotation of the $x,y$-plane.}
  \label{fig:non_triv_loop}
\end{figure}

In order to prove Theorem \ref{thm:concisotopy} we will need the following technical Lemmas concerning Hamiltonian isotopies, which are all standard.

\begin{Lem}
\label{lem:flowpoints}
For any smooth one-parameter family $f_t \colon I^k \to (X,\omega)$ of smooth maps, there is an induced smooth family $\Phi^t_{\mathbf{s}} \in \operatorname{Ham}^c(X)$, $\mathbf{s} \in I^k$, of smooth maps satisfying
\begin{itemize}
\item $\frac{d}{dt}\Phi^t_{\mathbf{s}}(f_{t_0}(\mathbf{s}))|_{t=t_0}=\frac{d}{dt} f_t(\mathbf{s})|_{t=t_0} \in T_{f_{t_0}(\mathbf{s})}X$;
\item $\frac{d}{dt}\Phi^t_\mathbf{s} \in \operatorname{Ham}^c(X)$ is supported in an arbitrarily small neighbourhood of $f_t(\mathbf{s}) \in X$; and
\item $\frac{d}{dt}\Phi^t_{\mathbf{s}}|_{t=t_0} \equiv 0$ for each $\mathbf{s} \in I^k$ satisfying $\frac{d}{dt} f_t(\mathbf{s})|_{t=t_0}=0$.
\end{itemize}
\end{Lem}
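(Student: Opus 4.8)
The plan is to obtain $\Phi^t_{\mathbf{s}}$ as the Hamiltonian isotopy generated, in the time variable $t$, by an explicitly constructed smooth family of compactly supported Hamiltonian functions $G_{t,\mathbf{s}} \colon X \to \R$ whose Hamiltonian vector fields realise the prescribed velocities at the prescribed points. Fix once and for all an auxiliary Riemannian metric $g$ on $X$ and let $\exp$ be its exponential map; only its restriction to small balls around the (compact) region swept out by $f$ will be used. The basic local fact is: for a point $p \in X$ and a vector $v \in T_pX$ there is a unique $w = w(p,v) \in T_pX$, depending linearly on $v$ and smoothly on $p$, such that the compactly supported function
\[
x \;\longmapsto\; \chi\bigl(\lVert \exp_p^{-1}(x)\rVert_g\bigr)\, g\bigl(w,\exp_p^{-1}(x)\bigr),
\]
with $\chi \colon \R_{\ge 0} \to [0,1]$ a fixed cut-off equal to $1$ near $0$ and supported in $[0,\delta)$, has Hamiltonian vector field equal to $v$ at the point $p$. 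Indeed this function vanishes at $p$ with differential $g(w,\cdot)$ there, so $w \mapsto X_{(\cdot)}(p)$ is the composite of the musical isomorphism of $g$ and that of $\omega$, a linear isomorphism of $T_pX$ which one simply inverts; moreover $w(p,v)$ vanishes precisely when $v=0$, and the support of the function lies in the metric $\delta$-ball about $p$, which shrinks to $\{p\}$ as $\delta \to 0$.

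Now set $G_{t,\mathbf{s}}$ equal to the function above with $p = f_t(\mathbf{s})$ and $v = \tfrac{d}{dt}f_t(\mathbf{s})$. By construction $G_{t,\mathbf{s}}$ depends smoothly on $(t,\mathbf{s})$, is compactly supported in an arbitrarily small neighbourhood of $f_t(\mathbf{s})$, has $X_{G_{t,\mathbf{s}}}(f_t(\mathbf{s})) = \tfrac{d}{dt}f_t(\mathbf{s})$, and vanishes identically whenever $\tfrac{d}{dt}f_t(\mathbf{s}) = 0$. Let $\Phi^t_{\mathbf{s}} \in \operatorname{Ham}^c(X)$ be the isotopy generated by the time-dependent Hamiltonian $G_{t,\mathbf{s}}$ (with the evident base-time normalisation at the value of $t$ under consideration, so that the derivative in the first bullet lands in $T_{f_{t_0}(\mathbf{s})}X$); smoothness in $\mathbf{s}$ is inherited from smooth dependence of ODE solutions on parameters, and $\Phi^t_{\mathbf{s}}$ is a compactly supported diffeomorphism since $X_{G_{t,\mathbf{s}}}$ is a compactly supported vector field. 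The second bullet is then immediate from the localisation of $G_{t,\mathbf{s}}$; the third holds because $G_{t_0,\mathbf{s}} \equiv 0$ forces $X_{G_{t_0,\mathbf{s}}} \equiv 0$ and hence $\tfrac{d}{dt}\Phi^t_{\mathbf{s}}|_{t=t_0} \equiv 0$; and the first is precisely the identity $X_{G_{t_0,\mathbf{s}}}(f_{t_0}(\mathbf{s})) = \tfrac{d}{dt}f_t(\mathbf{s})|_{t=t_0}$ built into the construction, read off from the flow equation $\partial_t \Phi^t_{\mathbf{s}} = X_{G_{t,\mathbf{s}}} \circ \Phi^t_{\mathbf{s}}$.

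The only genuine content lies in the first paragraph: realising an arbitrary prescribed velocity at a single point by a compactly supported Hamiltonian whose support can be made arbitrarily small and which depends smoothly on all of the parameters simultaneously. Carrying this out with a single fixed metric, rather than with a moving Darboux chart glued together by a partition of unity, is what keeps the smooth dependence transparent and reduces the verification to the elementary linear algebra of two musical isomorphisms. Everything afterwards — integrating the time-dependent Hamiltonian vector field, smoothness of the resulting flow in the parameters $\mathbf{s}$, and the bookkeeping behind the three stated properties — is soft and standard, which is no doubt why the lemma is left as a sketch.
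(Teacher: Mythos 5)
Your proof is correct and follows essentially the same strategy as the paper's: construct a smooth family of compactly supported Hamiltonians localised at the moving points $f_t(\mathbf{s})$ whose Hamiltonian vector fields realise the prescribed velocities there (and vanish identically when those velocities vanish), then integrate the resulting time-dependent flow. The only difference is in the auxiliary gadget: the paper cuts off a linear Hamiltonian in a smooth $(k+1)$-parameter family of Darboux charts centred at $f_t(\mathbf{s})$, whereas you use a fixed Riemannian metric's exponential map and correct by composing the two musical isomorphisms, which sidesteps the parametric Darboux theorem but is otherwise the same localisation argument.
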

\begin{proof}
A $(k+1)$-parameter family version of Darboux' theorem \cite{SympTop} produces a smooth family of symplectomorphisms
\[ \phi_{t,\mathbf{s}} \colon (B^{2n}_\epsilon,\omega_0) \to (X,\omega), \:\: \phi_{t,\mathbf{s}}(0)=f_t(\mathbf{s}).\]
 Pick a smooth compactly supported bump-function $\rho \colon B^{2n}_\epsilon \to \R_{\ge 0}$ which is equal to one in some neighbourhood of the origin, and whose support is sufficiently small. We define $\Phi^t_{\mathbf{s}}$ by the requirement that, in the Darboux coordinates $\phi_{t,\mathbf{s}}$, at time $t_0$ it is the Hamiltonian flow induced by $\rho \cdot H_{t_0}$ for the \emph{linear} Hamiltonian $H_{t_0} \colon B^{2n}_\epsilon \to \R$ generating the flow of $\frac{d}{dt} f_t(\mathbf{s})|_{t=t_0}$ at the origin.
\end{proof}

\begin{Lem}
\label{lem:makelinear}
Let $\phi_t \colon (B^{2n}_\epsilon,\omega_0) \to (\C^n,\omega_0)$ be a one-parameter family of symplectomorphisms fixing the origin, and assume that $\phi_0=\phi_1=\id_{B^{2n}_\epsilon}$. Then there is a family $\phi_{t,s}  \colon (B^{2n}_\epsilon,\omega_0) \to (\C^n,\omega_0)$ of symplectomorphisms fixing the origin and satisfying
\renewcommand {\theenumi}{\roman{enumi}}
\begin{enumerate}
\item $\phi_{t,0}=\phi_t$ and $D_0\phi_{t,s}=D_0\phi_t$;
\item $\phi_{t,1}=D_0\phi_t$ in some neighbourhood of the origin;
\item $\phi_{t,s}=\phi_t$ outside of a compact set; and
\item $\phi_{0,s}=\phi_{1,s}=\id_{B^{2n}_\epsilon}$;
\end{enumerate}
\end{Lem}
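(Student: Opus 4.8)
The statement to prove is Lemma~\ref{lem:makelinear}: given a loop $\phi_t$ of symplectomorphisms of $(B^{2n}_\epsilon,\omega_0)$ fixing the origin with $\phi_0=\phi_1=\id$, one can homotope it rel~endpoints (through loops fixing the origin and agreeing with $\phi_t$ near $\partial B^{2n}_\epsilon$) to the loop of linearisations $D_0\phi_t$, at least in a small ball around $0$. The plan is to carry this out in two independent stages, first reducing to the case where each $\phi_t$ is \emph{already} equal to its linearisation near the origin by an ``Alexander-trick''-type radial rescaling, and then linearising that germ through the linear symplectic group.

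\emph{Step 1: the radial rescaling.} For $r\in(0,1]$ let $m_r\colon \C^n\to\C^n$ be scalar multiplication by $r$, which is conformally symplectic, and set
\[
\psi_{t,r} := m_r^{-1}\circ\phi_t\circ m_r
\]
on the ball of radius $r\epsilon$, extended by $\phi_t$ outside. As $r\to 0$ this converges in $C^\infty_{\mathrm{loc}}$ to the constant-in-$r$ family $D_0\phi_t$, since $\phi_t(0)=0$. Rescaling a symplectomorphism by $m_r$ preserves $\omega_0$, fixes the origin, and does nothing to the derivative at $0$; moreover for $r=1$ we recover $\phi_t$, and at $t=0,1$ we keep the identity. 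Cutting off the interpolation parameter $r$ against the radial coordinate by a bump function (so that near $0$ we see the rescaled map and near $\partial B^{2n}_\epsilon$ we see $\phi_t$ unchanged) produces the homotopy $\phi_{t,s}$, $s\in[0,1/2]$ say, whose endpoint $\phi_{t,1/2}$ equals $D_0\phi_t$ in a neighbourhood of the origin and equals $\phi_t$ outside a compact set; conditions (i), (iii), (iv) are visibly preserved, and (ii) will hold at $s=1/2$ after taking $s$ from $1/2$ to $1$ to be the trivial (constant) homotopy. One has to be slightly careful that the cut-off interpolation between a rescaled symplectomorphism and the original one is again symplectic near the gluing region; this is arranged by performing the interpolation in the coordinates furnished by a parametrised Darboux/Moser argument exactly as in Lemma~\ref{lem:flowpoints}, i.e.\ by writing the isotopy as the flow of a (radially cut-off) time-dependent Hamiltonian rather than interpolating the maps directly.

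\emph{Step 2: from the linear germ to nothing more.} After Step~1 we may assume $\phi_t=A_t\in Sp(2n,\R)$ near the origin, where $A_0=A_1=\id$, i.e.\ $t\mapsto A_t$ is a loop in $Sp(2n,\R)$; but this loop is not required to be contractible, and indeed it cannot be — the whole point of the construction (cf.\ Lemma~\ref{lem:wrap} and Figure~\ref{fig:non_triv_loop}) is that it carries a nontrivial Maslov/$\pi_1(Sp)$ invariant. So Step~2 is \emph{not} a further contraction: the conclusion~(ii) we are after is precisely that $\phi_{t,1}=D_0\phi_t$ near $0$, which is exactly what Step~1 already achieves. Thus ``Step~2'' is only the bookkeeping that strings together the $s\in[0,1/2]$ homotopy from Step~1 with a constant $s\in[1/2,1]$ piece and verifies that the four listed properties hold along the entire concatenation; in particular (i) holds because rescaling by $m_r$ is the identity on $D_0$, (iii) holds because the cut-off is supported well inside $B^{2n}_\epsilon$, and (iv) holds because $\phi_0=\phi_1=\id$ is fixed by conjugation and cut-off alike.

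\emph{Main obstacle.} The genuine technical point — the ``hard part'' — is Step~1's claim that the family obtained by interpolating, via a radially supported cut-off, between the rescaled symplectomorphism $m_r^{-1}\phi_t m_r$ and $\phi_t$ itself can be made to consist of honest symplectomorphisms fixing the origin, depending smoothly on $(t,s)$, and reducing to the identity at $t=0,1$. Naively interpolating the maps does not preserve $\omega_0$. The fix is the standard one used elsewhere in this paper: realise everything as Hamiltonian flows. Concretely, write $\phi_t = \exp(X^{H_t})$-type data is not available globally, so instead one uses that $m_r^{-1}\phi_t m_r$ and $\phi_t$ agree to first order at $0$ and are $C^1$-close to each other for $r$ small on the support of the cut-off, hence differ by a Hamiltonian isotopy generated by a small compactly supported Hamiltonian (Weinstein's Lagrangian-neighbourhood / graph argument applied to the graphs in $(B^{2n}_\epsilon\times B^{2n}_\epsilon,\omega_0\oplus-\omega_0)$), and then multiply that generating Hamiltonian by the radial bump function. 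The smallness needed to invoke the graph argument is exactly what forces us to first pass to a sufficiently small ball, which is why the statement is local (``in some neighbourhood of the origin'') and the cut-off is essential. Once this is set up the verification of (i)–(iv) is routine.
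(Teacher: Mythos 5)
Your proposal is correct and follows essentially the same route as the paper: the Alexander-trick rescaling $\phi_t(s\mathbf{x})/s$ (smooth up to $s=0$ by Taylor expansion) to interpolate to $D_0\phi_t$, followed by cutting off the generating Hamiltonian of the resulting isotopy by a spatial bump function rather than interpolating the maps directly, and you rightly note that no contraction of the linear loop is required. The only (harmless) deviation is that you produce the Hamiltonian via the Weinstein graph argument with $C^1$-smallness on a small ball, whereas the paper simply observes that any path of symplectomorphisms of a ball starting at the identity and fixing the origin is generated by a time-dependent Hamiltonian, so no smallness is needed.
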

\begin{proof}
We define a smooth family of symplectomorphisms $\psi_{t,s}:=\phi_t(s \mathbf{x})/s$, $s \in (0,1]$. Considering the Taylor expansion of the form
\[ \phi_t(\mathbf{x}) = D_0\phi_t(\mathbf{x})+\mathbf{x}^{\operatorname{t}}Q_t\mathbf{x} +\mathbf{x}^{\operatorname{t}}K_t(\mathbf{x})\mathbf{x} ,\]
where $Q_t$ is a symmetric matrix, and $K_t$ depends smoothly on $(t,\mathbf{x})$ and vanishes at $\mathbf{x}=0$, it becomes clear that $\psi_{t,s}$ is smooth for $(t,s) \in [0,1] \times [0,1]$.

Observe that $\psi_{t,s}$ satisfies (i), (ii) and (iv) by construction. We can deform this path of functions to make it satisfy (iii) as well. To that end, observe that the paths $s \mapsto \psi_{t,s}\circ \phi_t^{-1}$ of symplectomorphisms fixing the origin (which all are defined in some neighbourhood of the origin) can be generated by Hamiltonian isotopies, i.e.~
\[\psi_{t,s}\circ \phi_t^{-1}=\phi^s_{H_{t,s}}\]
for some Hamiltonians $H_{t,s} \colon B^{2n}_\epsilon \to \R$. Choosing a suitable compactly supported bump function $\rho \colon B^{2n}_\epsilon \to \R_{\ge 0}$ equal to one in some neighbourhood of the origin, the sought family can be taken to be
\[ \phi_{t,s} := \phi^s_{\rho H_{t,s}} \circ \phi_t.\]
\end{proof}

\begin{Lem}
\label{lem:makelinearlag}
Suppose that $\phi \in Symp^c(T^*M)$ fixes the zero-section $0_M \subset T^*M$ set-wise. It follows that there exists a compactly supported Hamiltonian isotopy $\phi^t_{H_t}$ for which
\[\phi^1_{H_t} \circ \phi=(\phi|_{0_M}^{-1})^*\]
holds in some neighbourhood of $0_M$, where $\phi^t_{H_t}$ moreover can be taken to be the identity along the zero-section.
\end{Lem}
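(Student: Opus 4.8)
The plan is to first peel off the cotangent lift of $\phi|_{0_M}$, thereby reducing to a symplectomorphism that fixes $0_M$ pointwise, and then to run a fibrewise Alexander rescaling together with a Moser argument, in the spirit of the proof of Lemma~\ref{lem:makelinear}.

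\emph{Reduction.} Write $g:=\phi|_{0_M}$, a compactly supported diffeomorphism of $M$, and let $\Psi\in\operatorname{Symp}(T^*M)$ be its cotangent lift $(q,\xi)\mapsto(g(q),(Dg_q^{-1})^*\xi)$; this is the globally defined symplectomorphism denoted $(\phi|_{0_M}^{-1})^*$ in the statement, it preserves $0_M$ set-wise and restricts there to $g$. Set $\theta:=\Psi\circ\phi^{-1}\in\operatorname{Symp}(T^*M)$. Along $0_M$ we have $\Psi=g$ and $\phi^{-1}=g^{-1}$, so $\theta$ fixes $0_M$ pointwise, and $\theta\circ\phi=\Psi$. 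Hence it suffices to construct a compactly supported Hamiltonian isotopy $\phi^t_{H_t}$, the identity along $0_M$, with $\phi^1_{H_t}=\theta$ on some neighbourhood of $0_M$: pre-composing with $\phi$, which maps neighbourhoods of $0_M$ to neighbourhoods of $0_M$, then gives $\phi^1_{H_t}\circ\phi=\Psi=(\phi|_{0_M}^{-1})^*$ near $0_M$. From now on $\theta$ is an arbitrary element of $\operatorname{Symp}(T^*M)$ fixing $0_M$ pointwise.

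\emph{Rescaling.} Let $m_s\colon T^*M\to T^*M$, $m_s(q,p)=(q,sp)$, so that $m_s^*\omega_{\mathrm{can}}=s\,\omega_{\mathrm{can}}$. For $s\in(0,1]$ put $\theta_s:=m_{1/s}\circ\theta\circ m_s$; the scaling factors cancel, so each $\theta_s\in\operatorname{Symp}(T^*M)$ and still fixes $0_M$ pointwise. Working in product charts $V\times\mathbb R^n$ covering a disc-bundle neighbourhood of $0_M$ and writing $\theta(q,p)=(a(q,p),b(q,p))$ with $a(q,0)=q$ and $b(q,0)=0$, Hadamard's lemma gives $a(q,sp)=q+s\bigl(\int_0^1\partial_pa(q,tsp)\,dt\bigr)p$ and $s^{-1}b(q,sp)=\bigl(\int_0^1\partial_pb(q,tsp)\,dt\bigr)p$, so $\theta_s$ extends to a smooth family on a fixed neighbourhood $U$ of $0_M$ for all $s\in[0,1]$. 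Since a linear symplectomorphism with identity ``position block'' automatically has identity ``momentum block'', the derivative $D\theta|_{0_M}$ has the form $\bigl(\begin{smallmatrix}I&*\\0&I\end{smallmatrix}\bigr)$; this forces $\theta_0=\operatorname{Id}_U$, while $\theta_1=\theta|_U$.

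\emph{Moser and cut-off.} Let $X_s$ be the time-dependent vector field with $\tfrac{d}{ds}\theta_s=X_s\circ\theta_s$; it is symplectic and vanishes along $0_M$ because $\theta_s|_{0_M}=\operatorname{Id}$. Hence $\iota_{X_s}\omega_{\mathrm{can}}$ is a closed one-form whose restriction to $0_M$ vanishes, and since $0_M\hookrightarrow U$ is a homotopy equivalence its class in $H^1(U)$ is zero; so $\iota_{X_s}\omega_{\mathrm{can}}=dH_s$ for a smooth function $H_s$ on $U$, which — working componentwise on $M$, using that $0_M$ consists of critical points of $H_s$ — we normalise to vanish on $0_M$. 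Pick a cut-off $\chi\colon U\to[0,1]$, compactly supported in $U$ and equal to $1$ near $0_M$, and set $\widetilde H_t:=\chi H_t$, a compactly supported time-dependent Hamiltonian on $T^*M$. Along $0_M$ both $\widetilde H_t$ and $d\widetilde H_t$ vanish, so $\phi^t_{\widetilde H_t}\in\operatorname{Ham}^c(T^*M)$ is the identity along $0_M$; on the smaller neighbourhood where $\chi\equiv 1$ its flow coincides with that of $H_t$, hence with $\theta_t$, so $\phi^1_{\widetilde H_t}=\theta$ there. Combined with the reduction step this yields the asserted isotopy. The main subtleties, requiring the arguments above rather than being immediate, are the smooth extension of $\theta_s$ across $s=0$ and the exactness of $\iota_{X_s}\omega_{\mathrm{can}}$ near $0_M$ (which hinges on $X_s|_{0_M}=0$); the remaining steps are routine.
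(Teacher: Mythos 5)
Your proof is correct and follows essentially the same route as the paper's: reduce by the cotangent lift of $\phi|_{0_M}$, then apply the fibrewise rescaling $\psi_s:=\phi(s\mathbf{x})/s$ exactly as in Lemma \ref{lem:makelinear}, and generate the resulting path of symplectomorphisms by a Hamiltonian that is cut off near the zero-section. The only (harmless) difference is that you skip the paper's intermediate step of first arranging $D\phi=\operatorname{Id}$ along the zero-section by a preliminary Hamiltonian isotopy, observing instead that symplecticity forces the fibre block of $D\theta|_{0_M}$ to be the identity, so the rescaled family already limits to the identity at $s=0$.
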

\begin{proof}
After pre-composing $\phi$ with $(\phi|_{0_M})^*$, we see that it suffices to consider the case when $\phi|_{0_M}=\id_{0_M}$. After a Hamiltonian isotopy, we may furthermore assume that $D\phi$ is the identity along the zero-section. The statement now follows similarly to Lemma \ref{lem:makelinear}, after the observation that $\psi_s:=\phi(s\mathbf{x})/s$, $s \in [0,1]$, is smooth a family of symplectomorphisms defined in some neighbourhood of the zero-section (where the multiplication is fibre-wise).
\end{proof}

\begin{proof}[Proof of Theorem \ref{thm:concisotopy}]
We view the concordance as an exact Lagrangian submanifold in $\C^2 \setminus \{0\}$ which coincides with the real part outside of a compact set. We can clearly remove the singularity at the origin, thus producing an exact Lagrangian filling of $\Lambda_0$. It follows from Theorem \ref{thm:eliashbergpolterovich} that there is a compactly supported Hamiltonian isotopy $\phi^t_{H_t} \colon \C^2 \to \C^2$ for which the image of this filling under the time-one map is $L_0 \subset \C^2$. The goal is to deform this isotopy to make it fix a neighbourhood of $0 \in \C^2$.

First, by Lemma \ref{lem:makelinearlag}, there exists a compactly supported Hamiltonian isotopy fixing $L_0$ after which we may assume that $\phi^1_{H_t}$ coincides with the linear symplectomorphism
\[\begin{pmatrix}\pm 1 & 0 \\
0 & 1 \end{pmatrix} \in U(2)\]
in some neighbourhood of the origin. Second, using lemmas \ref{lem:flowpoints} and \ref{lem:makelinear}, we can produce a family of compactly supported symplectomorphisms $\psi_t \colon \C^2 \to \C^2$, $\psi_0=\psi_1=\id_{\C^2}$, which make $\Phi_t:=\psi_t \circ \phi^t_{H_t}$ into a path of compactly supported symplectomorphisms which all are linear in some neighbourhood of the origin (and hence, in particular, fix the origin).

Since the inclusion $U(n) \subset \operatorname{Sp}(n)$ is a homotopy equivalence, there exists a loop $A_t \in \operatorname{Symp}^c(\C^2)$, $A_0=A_1=\id_{\C^2}$, of compactly supported symplectomorphisms which are linear in a neighbourhood of the origin, for which the equality
\[A_t \circ \Phi_t= \begin{pmatrix}e^{i\pi m t} & 0 \\
0 & 1
\end{pmatrix},\]
is satisfied for some $m \in \Z$.

The sought isotopy can now be constructed by considering the above family $A_t \circ \Phi_t$ of symplectomorphisms composed with a family
\begin{gather*}B_t \colon \C^2 \setminus \{0\} \to \C^2 \setminus \{ 0\},\\
(z_1,z_2) \mapsto (e^{-i \pi m \sigma( \|\mathbf{z}\|)t }z_1,z_2),
\end{gather*}
of symplectomorphisms (see Lemma \ref{lem:wrap}) where $\sigma \colon \R_{\ge 0} \to \R_{\ge 0}$ is a smooth bump-function equal to one in a neighbourhood of the origin, satisfies $\sigma'(t) \le 0$, while $\sigma(t)=0$ holds outside of a compact set. Observe that $B_t$ coincides with the identity outside of a compact set, while it is equal to $A_t^{-1}$ in a neighbourhood of the origin.
\end{proof}

\subsection{Higher dimensional generalisation of the Eliashberg-Polterovich result}
There is no analogous classification of exact Lagrangian fillings of the unknot, nor of concordances, in higher dimensions. Using wrapped Floer homology  we can however obtain strong topological restrictions on such fillings. Theorem \ref{malgthinstregbam} gives information on the homology of the filling, while a version of wrapped Floer homology with local coefficients can, sometimes, give information on
the fundamental group.

Local coefficients in Floer homology was first considered by Damian in \cite{FloerHomUniv}. In the current setting, this is related to the version of Legendrian contact homology with local coefficients developed in \cite{Albin}. We also refer to \cite[Section 2]{EkholmSmith} for similar techniques.
\begin{Thm}
\label{thm:fukayaseidelsmith}
Let $\Lambda \subset (\R^{2n+1},\xi_{st})$ be Legendrian submanifold having a single transverse Reeb chord (e.g.~we can take $\Lambda=\Lambda_0$). Any exact Lagrangian filling $L$ of $\Lambda$ is contractible and, given that such a filling exists, it follows that $\Lambda$ is a homotopy sphere. In the latter case, $(\overline{L},\partial\overline{L})$ is diffeomorphic to $(D^{n+1},\partial D^{n+1})$ given that $n \ne 3,4$. For $n = 4$ the same is true under the further assumption that $\Lambda$ is diffeomorphic to $S^4$. If $n=3$, then $(\overline{L},\partial\overline{L})$ is {\em homeomorphic} to $(D^{4},\partial D^{4})$.

\end{Thm}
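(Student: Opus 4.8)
The plan is to combine Theorem~\ref{malgthinstregbam} with a twisted (local coefficient) refinement of the isomorphism in Theorem~\ref{thm:isom}, and then to feed the resulting homotopy-theoretic conclusions into standard manifold topology.

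First I would extract the homological consequences. Since $\Lambda$ carries a single Reeb chord $a$, the complex $LCC^\bullet_\varepsilon(\Lambda)$ is generated by $a$ alone, so its differential vanishes for degree reasons and $LCH^\bullet_\varepsilon(\Lambda)\cong \Z_2$, concentrated in the single degree $|a|$. Theorem~\ref{malgthinstregbam} gives $H_i(L;\Z_2)\cong LCH^{n-i}_\varepsilon(\Lambda)$, and since $L\neq\emptyset$ we have $H_0(L;\Z_2)\neq 0$; this forces $|a|=n$ and $H_\bullet(L;\Z_2)\cong H_\bullet(\mathrm{pt};\Z_2)$. In particular $L$ is connected and $H^1(L;\Z_2)=H^2(L;\Z_2)=0$, so $L$ is orientable and spin.

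The heart of the argument is to show $\pi_1(L)=1$ by repeating this computation with coefficients in the local system $\underline{\Z_2[\pi_1(L)]}$. For this one first establishes a local-coefficient version of Theorem~\ref{thm:isom}, namely an isomorphism $HF_\bullet(L,\phi^\epsilon_{e^t}(L);\underline{\Z_2[\pi_1(L)]})\cong LCH^\bullet_\varepsilon(\Lambda;\underline{\Z_2[\pi_1(L)]})$, where the Chekanov--Eliashberg algebra and the wrapped Floer complex are twisted by the monodromy of holomorphic discs along $\pi_1(\Lambda)\to\pi_1(L)$, in the spirit of \cite{Albin,FloerHomUniv,EkholmSmith}. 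Verifying that the chain-level constructions of Sections~\ref{sec:moduli}--\ref{sec:relat-counts-pseudo} and the neck-stretching arguments underlying Theorem~\ref{thm:pushoff} survive the twisting is the main technical obstacle. Granting it, the twisted complex is still the free rank-one $\Z_2[\pi_1(L)]$-module on $a$, and for $n\geq 2$ the same degree obstruction kills its differential, so $LCH^\bullet_\varepsilon(\Lambda;\underline{\Z_2[\pi_1(L)]})\cong \Z_2[\pi_1(L)]$ concentrated in degree $n$. On the other hand $H_\bullet(L;\underline{\Z_2[\pi_1(L)]})\cong H_\bullet(\widetilde L;\Z_2)$ as $\Z_2[\pi_1(L)]$-modules, so $H_0(\widetilde L;\Z_2)\cong \Z_2[\pi_1(L)]$; but $\widetilde L$ is connected, so this group is $\Z_2$, whence $\pi_1(L)=1$. (When $n=1$, $\Lambda$ is the maximal Thurston--Bennequin Legendrian unknot and $(\overline L,\partial\overline L)\cong(D^2,\partial D^2)$ already follows from Theorem~\ref{thm:eliashbergpolterovich}.) Running the first step over $\Z$ instead, which is legitimate since $L$ is spin so that the coherent orientations of Theorem~\ref{thm:pushoff} are defined, yields $H_\bullet(L;\Z)\cong H_\bullet(\mathrm{pt};\Z)$. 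As $L$ deformation retracts onto the compact manifold $\overline L$, it has the homotopy type of a finite CW complex; being simply connected and acyclic it is contractible, by the Hurewicz and Whitehead theorems.

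It remains to analyse the pair $(\overline L,\partial\overline L)=(\overline L,\Lambda)$. Lefschetz duality and the long exact sequence of the pair show that $\Lambda$ is an integral homology $n$-sphere, and $\pi_1(\Lambda)=1$ by \cite{ExoticSpheres,EkholmSmith}, so $\Lambda$ is a homotopy $n$-sphere. For $n=1,2$ the classification of surfaces, respectively Perelman's theorem applied to $\overline L\cup_{S^2}D^3$, gives $(\overline L,\Lambda)\cong(D^{n+1},S^n)$. For $n\geq 5$, removing an open ball from the interior of the contractible manifold $\overline L$ produces an $h$-cobordism from $\Lambda$ to $S^n$ of dimension $n+1\geq 6$, so the $h$-cobordism theorem yields $\overline L\cong D^{n+1}$ and $\Lambda\cong S^n$. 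For $n=4$, under the additional hypothesis $\Lambda\cong S^4$, capping $\overline L$ with $D^5$ along $S^4$ produces a homotopy $5$-sphere, necessarily the standard $S^5$ because $\Theta_5=0$, so $\overline L\cong D^5$. Finally, for $n=3$, capping $\overline L$ with $D^4$ produces a homotopy $4$-sphere, which is homeomorphic to $S^4$ by Freedman's theorem, so $(\overline L,\Lambda)$ is homeomorphic to $(D^4,S^3)$, the smooth statement being beyond reach by the usual four-dimensional difficulties.
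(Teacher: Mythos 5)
Your proposal is correct and follows essentially the same route as the paper's proof: the Seidel-type isomorphism of Theorem~\ref{malgthinstregbam} (over $\Z_2$ and then $\Z$) to show $L$ is a homology disc, a $\Z_2[\pi_1(L)]$-twisted version of the Floer/LCH comparison to kill $\pi_1(L)$ --- which is precisely the local-coefficient wrapped theory the paper sets up, culminating in Corollary~\ref{twisted seidel} and Proposition~\ref{sec:high-dimens-gener} --- followed by the Ekholm--Smith result to see that $\Lambda$ is a homotopy sphere, and finally the $h$-cobordism theorem, Perelman, Kervaire--Milnor ($\Theta_5=0$) and Freedman in the respective dimensions. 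Your minor variations (treating $n=1$ via Theorem~\ref{thm:eliashbergpolterovich}, and deducing that $\Lambda$ is a homology sphere by Lefschetz duality before invoking Ekholm--Smith for simple connectivity) do not change the substance of the argument.
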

\begin{Rem}
\label{rem:bull}
When $\Lambda=\Lambda_0$, and under the additional assumption that the Maslov class of $L$ vanishes, the above statement follows from a result due to Abouzaid \cite{NearbyLagrangian}, based upon earlier work by Fukaya-Seidel-Smith \cite{ExactLagrangianCotangent} and Nadler \cite{MicrolocalBranes}. To that end, observe that a Weinstein handle-attachment on $(B^{2n+2},\omega_0)$ along $\Lambda_0 \subset \partial B^{2n+2}$ produces the symplectic manifold $D^*S^{n+1}$ and that the union of $L$ with the Lagrangian core of the corresponding handle is a closed exact Lagrangian submanifold. Finally, the assumption on the Maslov class was removed in \cite{RingSpectra}.
\end{Rem}

%The proof of Theorem \ref{thm:fukayaseidelsmith} uses Fukaya categories and the full strength of the theory developed in \cite{TheBook}. Here we give a new and more elementary proof using a version of Theorem \ref{malgthinstregbam} with twisted coefficients.

Let $L$ be an exact Lagrangian filling of $\Lambda$. We denote by $\widetilde{L}$ the
universal cover of $L$ and by $\widetilde{\Lambda}$ the preimage of $\Lambda$ to $\widetilde{L}$. Two important points should be stressed here: the first one is that $\widetilde{\Lambda}$ is not the universal cover of $\Lambda$ in general; in fact
$\widetilde{\Lambda}$ does not have to be either connected or simply connected. The
second one is that we do not claim that $\widetilde{L}$ embeds in $\R^{2n+2}$ as a Lagrangian submanifold, nor that $\widetilde{\Lambda}$ embeds in  $\R^{2n+1}$ as a
Legendrian submanifold.

We fix a base point $* \in L$. The fundamental group $\pi_1(L, *)$ acts on $\widetilde{L}$
(and therefore on $\widetilde{\Lambda}$) by deck transformations.
We denote by $\mathfrak{p} \colon \widetilde{L} \to L$ the projection and $\pi_1(L, *)= \pi$.
We define the $\Z_2$-vector space $\underline{CF}_\bullet(L, \phi^\epsilon_{e^t}(L))$  generated by the points $\tilde{p} \in \widetilde{L}$ such that $\mathfrak{p}(\tilde{p})
\in L \cap \phi^\epsilon_{e^t}(L)$. The degree of $\widetilde{p}$ is, by definition, the degree
of $\mathfrak{p}(\tilde{p})$ as a generator of $CF_\bullet(L, \phi^\epsilon_{e^t}(L))$. The group $\pi$ acts on $\underline{CF}_\bullet(L, \phi^\epsilon_{e^t}(L))$ by deck transformations. This action turns $\underline{CF}_\bullet(L, \phi^\epsilon_{e^t}(L))$ into a  free $\Z_2[\pi]$-module, and thus
$$\underline{CF}_\bullet(L, \phi^\epsilon_{e^t}(L)) \cong CF_\bullet(L, \phi^\epsilon_{e^t}(L)) \otimes_{\Z_2} \Z_2[\pi]$$
as a $\Z_2$-vector space. The choice of such an
isomorphism is equivalent to the choice of a base point in $\mathfrak{p}^{-1}(p)$ for
all $p \in L \cap \phi^\epsilon_{e^t}(L)$.

From now on we will write ${\mathcal M}(p, q):={\mathcal M}_{L, \phi^\epsilon_{e^t}(L)}(p; q)$ for simplicity. Given generators $\tilde{p}$ and $\tilde{q}$ of $\underline{CF}_\bullet(L, \phi^\epsilon_{e^t}(L))$ with $\mathfrak{p}(\tilde{p})=p$ and $\mathfrak{p}(\tilde{q})=q$ and a strip $u \in {\mathcal M}(p, q)$ we define
$\ell_{u, \tilde{p}} \colon \R \to \widetilde{L}$ as the lift of $u|_{\R \times {0}}$
to $\widetilde{L}$ such that $\lim \limits_{s \to - \infty} \ell_{u, \tilde{p}}(s) = \tilde{p}$. We will use the map
$\ell_{u, \tilde{p}}$ as a mean to keep track of the homotopy class of the strip $u$.
We define
$$\underline{\mathcal M}(\tilde{p}, \tilde{q}) \subset {\mathcal M}(p, q)$$
as the set of holomorphic strips $u \in {\mathcal M}(p, q)$ such that
$\lim \limits_{s \to + \infty} \ell_{u, \tilde{p}}(s) = \tilde{q}$.
We denote by  $\# \underline{\mathcal M}(\tilde{p}, \tilde{q})$ the count modulo two of strips in  $\# \underline{\mathcal M}(\tilde{p}, \tilde{q})$ when those space are $0$-dimensional else we set it to $0$. (If
$L$ is spin and a coherent orientation system for the moduli spaces ${\mathcal M}(p,q)$ is chosen, we can restrict it to $\underline{\mathcal M}(\tilde{p}, \tilde{q})$ and define
the count over the integers.)

By a routine exercise on covering spaces, the $\Z_2$-linear map
$$\underline{d}_0 \colon \underline{CF}_\bullet(L, \phi^\epsilon_{e^t}(L)) \to \underline{CF}_{\bullet+1}(L, \phi^\epsilon_{e^t}(L))$$
defined on the generators by
$$\underline{d}_0 (\tilde{q}) = \sum \limits_{\tilde{p} \in \mathfrak{p}^{-1}(L \cap \phi^\epsilon_{e^t}(L))} \# \underline{\mathcal M}(\tilde{p}, \tilde{q}) \tilde{p}$$
is a $\Z_2[\pi]$-linear differential. We denote by $\underline{HF}_\bullet(L, \phi^\epsilon_{e^t}(L))$ its homology. The correspondence between $J$-holomorphic strips in ${\mathcal M}(p, q)$ and gradient flow trajectories on $L$ (see \cite{MorseTheoryLagr}) shows that there is a $\Z_2[\pi]$-linear isomorphism
$$\underline{HF}_\bullet(L, \phi^\epsilon_{e^t}(L)) \cong H_{n- \bullet}(\widetilde{L};\Z_2)$$
for the natural $\pi$-action on the Morse homology of $\widetilde{L}$.

Below we will use the identification $\Lambda \cong \partial\overline{L} \subset L$. Now we define the twisted version of linearised contact cohomology. Given a Reeb chord $a$ from $\phi^\epsilon(\Lambda)$ to $\Lambda$ we denote by $a_e$ the endpoint of $a$ on $\Lambda$, and use $\widetilde{\mathcal{Q}}(\phi^\epsilon(\Lambda),\Lambda)$ to denote the set of all pairs $\tilde{a} = (a, x)$, where $a$ is a Reeb chord from $\phi^\epsilon(\Lambda)$ to $\Lambda$ and $x \in \mathfrak{p}^{-1}(a_e)$. We define $\underline{LCC}_{\varepsilon_0,\varepsilon_1}^\bullet(\Lambda,\phi^\epsilon(\Lambda))$ as the $\Z_2$-module generated $\widetilde{\mathcal{Q}}(\Lambda,\phi^\epsilon(\Lambda))$. The degree of $\widetilde{a}$ is, by definition, the degree of $a$. Here we should consider $\varepsilon_0$ and $\varepsilon_1$ as augmentations induced by $L$ and $\phi^\epsilon_{e^t}(L)$, respectively, where the count of one-punctured pseudo-holomorphic discs with boundary on $\phi^\epsilon_{e^t}(L)$ keeps track of the homotopy class of the discs (see the definition of the differential below). The group $\pi$ acts on $\underline{LCC}_{\varepsilon_0,\varepsilon_1}^\bullet(\Lambda,\phi^\epsilon(\Lambda))$ by deck transformations on the second component, and in fact
$$\underline{LCC}_{\varepsilon_0,\varepsilon_1}^\bullet(\Lambda,\phi^\epsilon(\Lambda)) \cong LCC_{\varepsilon_0,\varepsilon_1}^\bullet(\Lambda,\phi^\epsilon(\Lambda)) \otimes_{\Z_2} \Z_2[\pi]$$
as vector spaces. The choice of such an isomorphism is equivalent to the choice of a base point in $\mathfrak{p}^{-1}(a_e)$ for every Reeb chord $a$. A similar lifting construction is done also for chords on $\Lambda$.

For a Reeb chord $c$ of $\Lambda$ we define the moduli space ${\mathcal M}_L(c)$
of $J$-holomorphic maps $u \colon (D^2, \partial D^2) \to (\R \times P \times \R, L)$ with one positive puncture asymptotic to $c$. For a $i$-tuple $\mathbf{c}= c_1 \ldots c_i$ of Reeb chords, we define
$${\mathcal M}_L(\mathbf{c}) :=  {\mathcal M}_L(c_1) \times \ldots \times {\mathcal M}_L(c_i).$$
Finally, given Reeb chords $a,b$ from $\phi^\epsilon_{e^t}(\Lambda)$ to $\Lambda$, we define
$$\widetilde{\mathcal M}(a,b) := {\mathcal M}_{\R \times \Lambda,\R \times \phi^\epsilon(\Lambda)}(a; \mathbf{c},b,\mathbf{d}) \times
{\mathcal M}_L(\mathbf{c}) \times {\mathcal M}_{\phi^\epsilon_{e^t}(L)}(\mathbf{d}).$$
Thus the elements of $\widetilde{\mathcal M}(a,b)$ are $J$-holomorphic buildings
$$\tilde{u} = (u, u_{0,1}, \ldots , u_{0, i_0},  u_{1,1}, \ldots , u_{1, i_1}).$$

The boundary of $\underline{LCC}_{\varepsilon_0,\varepsilon_1}^\bullet(\Lambda,\phi^\epsilon(\Lambda))$ counts holomorphic buildings in the moduli spaces $\widetilde{\mathcal M}(a,b)$ as follows.

Given a holomorphic building $\tilde{u} = (u, u_{0,1}, \ldots , u_{0, i_0},  u_{1,1}, \ldots , u_{1, i_1})$, we want to construct a (homotopy class) of a continuous map $l_{\tilde{u},\tilde{a}}:\R \to \widetilde{\overline{L}} \subset \widetilde{L}$ to the universal cover of $\overline{L}$ analogous to the path $\ell_{u, \tilde{p}}$ constructed above. We order the sequence of $i_0+1$ open boundary arcs of $u\in \mathcal {M}_{\R \times \Lambda,\R \times \phi^\epsilon(\Lambda)}(a; \mathbf{c},b,\mathbf{d})$ lying on $\R \times \Lambda$ following the  boundary orientation and starting from the positive puncture, and  denote $l_1,\hdots,l_{i_0+1}$ the corresponding (compact) boundary arcs on $\Lambda$ induced by the canonical projection (here we have used the asymptotic behaviour of $u$). The asymptotic behaviour of each $u_{0,i} \in {\mathcal M}_L(c_i)$ implies that its boundary compactifies to a continuous path $k_i$ on the compactification of $L$ identified with $\overline{L}$. Furthermore, the concatenation $l_1 * k_1 * l_2 * k_2 * \hdots * l_{i_0} *k_{i_0} * l_{i_0+1}$ defines a continuous path on $\overline{L}$ starting on $a_e \in \Lambda = \partial\overline{L}$ and ending on $b_e$, and we define $\ell_{\tilde{u}}$ to be this path. Finally, for $\tilde{a}=(a,x)$ as above, we use
$$\ell_{\tilde{u}, \tilde{a}} \colon \R \to \widetilde{\overline{L}} \subset \widetilde{L}$$
to denote the lift of $l_{\tilde u}$ to the universal cover of $\overline{L}$ satisfying $\lim \limits_{s \to -\infty} \ell_{\tilde{u}, \tilde{a}} = x$.

Given two Reeb chords $a,b$ from $\phi^\epsilon_{e^t}(\Lambda)$ to $\Lambda$ and two lifts $\tilde{a}=(a,x)$ and $\tilde{b}=(b,y)$ we now define
$$\underline{\widetilde{\mathcal M}}(\tilde{a}, \tilde{b}) \subset \widetilde{\mathcal M}(a,b)$$
as the set of holomorphic buildings $\tilde{u} \in  \widetilde{\mathcal M}(a,b)$ such
that $\lim \limits_{s \to + \infty} \ell_{\tilde{u}, \tilde{a}} = y$. We then define the $\Z_2[\pi]$-linear differential
$$\underline{d}_{\varepsilon_0,\varepsilon_1} \colon \underline{LCC}_{\varepsilon_0,\varepsilon_1}^\bullet (\Lambda,\phi^\epsilon(\Lambda)) \to \underline{LCC}_{\varepsilon_0,\varepsilon_1}^{\bullet+1} (\Lambda,\phi^\epsilon(\Lambda))$$
by
$$\underline{d}_{\varepsilon_0,\varepsilon_1} (\tilde{b}) = \sum_{\tilde{a} \in \widetilde{\mathcal{Q}}(\phi^\epsilon(\Lambda),\Lambda)} \# \underline{\widetilde{\mathcal M}}(\tilde{a}, \tilde{b}) \tilde{a}.$$
We denote the corresponding homology by $\underline{LCH}_{\varepsilon_0,\varepsilon_1}^\bullet(\Lambda,\phi^\epsilon(\Lambda))$.

By the now familiar lifting construction we can define a $\Z_2[\pi]$-linear chain map
$$\underline{\delta} \colon \underline{CF}_\bullet(L, \phi^\epsilon_{e^t}(L)) \to \underline{LCC}_{\varepsilon_0,\varepsilon_1}^\bullet(\Lambda,\phi^\epsilon(\Lambda)).$$
 The invariance properties used in Section \ref{wfh} can be extended to the current setting, showing that $\underline{\delta}$ is a $\pi$-equivariant quasi-isomorphism. Therefore we have the following corollary.
\begin{Cor}\label{twisted seidel}
Let $\Lambda\subset P\times\R$ be a Legendrian submanifold admitting an exact Lagrangian filling $L$. Then there is an $\pi_1(L)$-equivariant isomorphism
$$H_{i}(\tilde{L}; \Z_{2}) \simeq \underline{LCH}_{\varepsilon_0,\varepsilon_1}^\bullet(\Lambda,\phi^\epsilon(\Lambda))$$
of $\Z_{2}[\pi]$-modules, where $\varepsilon_0,\varepsilon_1$ are the augmentations induced by $L$ and $\phi^\epsilon_{e^t}(L)$, respectively, and where all gradings are taken modulo the Maslov number of $L$.
\end{Cor}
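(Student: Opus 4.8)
The plan is to run the proof of Theorem~\ref{malgthinstregbam} with $\Z_2[\pi]$-coefficients, $\pi=\pi_1(L,\ast)$, using the twisted objects already constructed above. We have at our disposal the twisted Floer complex $\underline{CF}_\bullet(L,\phi^\epsilon_{e^t}(L))$ together with the $\pi$-equivariant Morse identification $\underline{HF}_\bullet(L,\phi^\epsilon_{e^t}(L))\cong H_{n-\bullet}(\widetilde{L};\Z_2)$, the twisted linearised contact cohomology complex $\underline{LCC}^\bullet_{\varepsilon_0,\varepsilon_1}(\Lambda,\phi^\epsilon(\Lambda))$, and the $\Z_2[\pi]$-linear chain map $\underline{\delta}$ between them. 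Hence the corollary reduces to the claim that $\underline{\delta}$ is a $\pi$-equivariant quasi-isomorphism: granting this, composing the resulting isomorphism on homology with the Morse identification and reindexing $i\mapsto n-i$ (all degrees taken modulo the Maslov number $\mu_L$, exactly as in Theorems~\ref{malgthinstregbam} and~\ref{thm:isom}) produces the asserted $\Z_2[\pi]$-module isomorphism.

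To prove that $\underline{\delta}$ is a quasi-isomorphism, I would assemble the twisted wrapped complex
\[ \underline{CW}_\bullet(L,\phi^\epsilon_{e^t}(L))=\underline{LCC}^{\bullet-1}_{\varepsilon_0,\varepsilon_1}(\Lambda,\phi^\epsilon(\Lambda))\oplus\underline{CF}_\bullet(L,\phi^\epsilon_{e^t}(L)) \]
with the upper-triangular differential $\left(\begin{smallmatrix}\underline{d}_{\varepsilon_0,\varepsilon_1}&\underline{\delta}\\0&\underline{d}_0\end{smallmatrix}\right)$, defined exactly as in Section~\ref{wfh} but with every boundary arc recorded in the universal cover $\widetilde{L}$. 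As $\underline{LCC}^{\bullet-1}_{\varepsilon_0,\varepsilon_1}$ is a subcomplex of free $\Z_2[\pi]$-modules with quotient $\underline{CF}_\bullet$, the standard mapping-cone algebra over the ring $\Z_2[\pi]$ shows that $\underline{\delta}$ is a quasi-isomorphism if and only if $\underline{CW}_\bullet(L,\phi^\epsilon_{e^t}(L))$ is acyclic. The untwisted statement is Proposition~\ref{prop:invariance} (that is, \cite[Proposition~5.12]{Lifting}), whose proof is a chain-level argument built from Hamiltonian isotopies of $\phi^\epsilon_{e^t}(L)$ and their associated chain maps and homotopies (equivalently, the neck-stretching analysis of Section~\ref{sec:neckstretch}). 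Every such isotopy lifts to $\widetilde{L}$, and the moduli spaces entering these chain maps and homotopies carry boundary arcs whose homotopy classes can be followed in $\widetilde{L}$ just as was done for $\underline{d}_0$, $\underline{d}_{\varepsilon_0,\varepsilon_1}$ and $\underline{\delta}$; hence the whole argument is $\pi$-equivariant and shows $\underline{CW}_\bullet(L,\phi^\epsilon_{e^t}(L))$ to be acyclic over $\Z_2[\pi]$.

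The main obstacle is precisely this equivariant upgrade of Proposition~\ref{prop:invariance}: one must check that every chain map and every homotopy occurring in its proof is compatible with the local system, i.e.\ that the relevant (parametrised) moduli spaces, together with their breakings into the pieces described in Sections~\ref{sec:neckstretch} and~\ref{sec:relat-counts-pseudo}, admit a consistent assignment of homotopy classes of boundary paths in $\widetilde{L}$, so that the induced maps are genuinely $\Z_2[\pi]$-linear and the mapping-cone computation goes through verbatim. This is the same kind of bookkeeping as in the ``now familiar'' lifting construction already used above, but it must be pushed through every step of the invariance proof rather than just through the definition of the differentials; once it is in place, the grading conventions are inherited unchanged from the untwisted case and the corollary follows.
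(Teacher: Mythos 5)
Your proposal is correct and follows essentially the same route as the paper: the paper likewise obtains the corollary by extending the invariance properties of Section~\ref{wfh} (the acyclicity of the wrapped complex, i.e.\ Proposition~\ref{prop:invariance}) to the twisted setting, so that $\underline{\delta}$ becomes a $\pi$-equivariant quasi-isomorphism, and then composes with the identification $\underline{HF}_\bullet(L,\phi^\epsilon_{e^t}(L))\cong H_{n-\bullet}(\widetilde{L};\Z_2)$ coming from the strip--gradient-flow correspondence. Your extra remarks on the mapping-cone formulation over $\Z_2[\pi]$ and on tracking boundary arcs in $\widetilde{L}$ through the chain maps and homotopies are exactly the bookkeeping the paper leaves implicit.
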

\begin{Prop}\label{sec:high-dimens-gener}
Any exact Lagrangian filling $L$ of a simply connected Legendrian submanifold $\Lambda \subset (\R^{2n+1}, \xi_{st})$ having a single transverse Reeb chord is simply connected.
\end{Prop}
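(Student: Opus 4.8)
The plan is to deduce this from Theorem~\ref{malgthinstregbam} together with the twisted Seidel isomorphism of Corollary~\ref{twisted seidel}, exploiting the fact that a single Reeb chord makes every complex in sight as small as possible. Write $\pi:=\pi_1(L)$ and let $\mathfrak p\colon \widetilde L\to L$ be the universal cover. First I would pin down the topology of $L$: since $\Lambda$ has exactly one transverse Reeb chord $c$, the complex $LCC^\bullet_\varepsilon(\Lambda)$ has a single generator, its differential (which raises degree by one) vanishes, and $LCH^\bullet_\varepsilon(\Lambda)\cong\Z_2$. By Theorem~\ref{malgthinstregbam} the total $\Z_2$-dimension of $H_\bullet(L;\Z_2)$ is then $1$, and as $L\neq\emptyset$ this forces $H_0(L;\Z_2)=\Z_2$ and $H_i(L;\Z_2)=0$ for $i\geq 1$; in particular $L$, and hence $\overline L$ onto which it deformation retracts, is connected. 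Moreover $H_1(L;\Z)$ is finitely generated with $H_1(L;\Z)\otimes\Z_2=H_1(L;\Z_2)=0$, so it is finite of odd order; since the symplectisation $\R\times\R^{2n+1}$ is contractible one has $H_2(\R\times\R^{2n+1},L;\Z)\cong H_1(L;\Z)$, so the Maslov class $\mu\colon H_2(\R\times\R^{2n+1},L;\Z)\to\Z$ is zero. Hence $\mu_L=0$, all gradings below are genuine $\Z$-gradings, and $c$ has degree $n$ (forced by $H_0(L;\Z_2)\cong LCH^n_\varepsilon(\Lambda)\neq 0$).

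Next I would compute the twisted complex $\underline{LCC}^\bullet_{\varepsilon_0,\varepsilon_1}(\Lambda,\phi^\epsilon(\Lambda))$ from the construction preceding Corollary~\ref{twisted seidel}, where $\varepsilon_0,\varepsilon_1$ are the augmentations induced by $L$ and $\phi^\epsilon_{e^t}(L)$. For $0<\epsilon$ smaller than the length of $c$, the unique Reeb chord from $\phi^\epsilon(\Lambda)$ to $\Lambda$ is the short sub-chord $c^\epsilon\subset c$: the degenerate ``Morse--Bott'' family created by pushing off runs from $\Lambda$ to $\phi^\epsilon(\Lambda)$, not the other way, and so contributes nothing; compare Proposition~\ref{prop:identification}. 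Thus $\widetilde{\mathcal Q}(\phi^\epsilon(\Lambda),\Lambda)$ consists of the pairs $(c^\epsilon,x)$ with $x\in\mathfrak p^{-1}((c^\epsilon)_e)$, and since the fibre of the universal cover over a point of $\overline L$ is a free $\pi$-set of cardinality $|\pi|$, the module $\underline{LCC}^\bullet_{\varepsilon_0,\varepsilon_1}(\Lambda,\phi^\epsilon(\Lambda))\cong LCC^\bullet_{\varepsilon_0,\varepsilon_1}(\Lambda,\phi^\epsilon(\Lambda))\otimes_{\Z_2}\Z_2[\pi]$ is the rank-one free $\Z_2[\pi]$-module, concentrated in the single degree $|c^\epsilon|=n$. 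Its differential raises degree by one, hence vanishes, and therefore $\underline{LCH}^\bullet_{\varepsilon_0,\varepsilon_1}(\Lambda,\phi^\epsilon(\Lambda))\cong\Z_2[\pi]$, concentrated in degree $n$.

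Finally I would feed this into Corollary~\ref{twisted seidel}, which yields a $\Z_2[\pi]$-module isomorphism $H_i(\widetilde L;\Z_2)\cong\underline{LCH}^{n-i}_{\varepsilon_0,\varepsilon_1}(\Lambda,\phi^\epsilon(\Lambda))$, with honest gradings since $\mu_L=0$. Hence $H_i(\widetilde L;\Z_2)=\Z_2[\pi]$ for $i=0$ and vanishes otherwise. On the other hand $\widetilde L$, being the universal cover of the connected space $L$, is connected, so $H_0(\widetilde L;\Z_2)=\Z_2$. Comparing the two descriptions gives $\Z_2[\pi]\cong\Z_2$ as $\Z_2$-vector spaces, which forces $|\pi|=1$, i.e. $\pi_1(L)=1$. (Note that if some other grading convention is in force in Corollary~\ref{twisted seidel}, the argument is unaffected: one only needs that $H_\bullet(\widetilde L;\Z_2)$ is the free $\Z_2[\pi]$-module concentrated in a \emph{single} integer degree, and then connectedness of $\widetilde L$ pins that degree to $0$.)

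The step I expect to require the most care is the second one: identifying $\underline{LCC}^\bullet_{\varepsilon_0,\varepsilon_1}(\Lambda,\phi^\epsilon(\Lambda))$ with the rank-one free $\Z_2[\pi]$-module concentrated in a single degree. This means describing the small-$\epsilon$ model of the Reeb chords between $\Lambda$ and $\phi^\epsilon(\Lambda)$, controlling the degenerate Morse--Bott configurations arising from the parallel copy, and matching the grading conventions behind Corollary~\ref{twisted seidel}; this is precisely what the twisted refinement of Proposition~\ref{prop:identification} and the construction preceding Corollary~\ref{twisted seidel} are designed to handle (and it is here that the hypothesis on $\Lambda$ enters, through that construction). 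Once it is in place the remaining argument is purely formal, consisting of the dimension count $\dim_{\Z_2}\Z_2[\pi]=1\Rightarrow|\pi|=1$.
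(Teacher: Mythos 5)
Your argument is correct and follows essentially the same route as the paper: compute the twisted complex $\underline{LCC}^\bullet_{\varepsilon_0,\varepsilon_1}(\Lambda,\phi^\epsilon(\Lambda))$ from the single chord, get $\underline{LCH}\cong\Z_2[\pi]$ with vanishing differential, and compare via Corollary~\ref{twisted seidel} with $H_0(\widetilde L;\Z_2)\cong\Z_2$ to force $\pi_1(L)=1$. The only difference is cosmetic: you make explicit the vanishing of the Maslov class and the concentration of the grading (which the paper leaves implicit, concluding instead via the freeness of the $\pi$-action on the lifted generators), so your write-up is a slightly more detailed version of the same proof.
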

\begin{proof}
Let $\widetilde{L}$ be the universal cover of $L$ and $\widetilde{\Lambda}$ the preimage of $\Lambda$ in $\widetilde{L}$. Since $\Lambda$ is simply connected, $\widetilde{\Lambda}$ consists of a disjoint union of spheres and $\pi = \pi_1(L)$ acts freely and transitively on $\pi_0(\widetilde{\Lambda})$.

Since $\Lambda$ has a unique Reeb chord, the differential $\underline{d}_{\varepsilon_0,\varepsilon_1}$ is trivial.  It follows that $\underline{LCH}_{\varepsilon_0,\varepsilon_1}(\Lambda,\phi^\epsilon(\Lambda)) \cong \Z_{2}[\pi]$ and, moreover, the action of $\pi$ on this homology group is fixed-point free. On the other hand $H_0(\widetilde{L}; \Z_{2}) \cong \Z_{2}$ because $\widetilde{L}$ is connected. Since $\pi$ acts trivially on $H_0(\widetilde{L};\Z_2)$, Corollary \ref{twisted seidel} thus implies that $\Z_{2}[\pi] \cong \Z_{2}$, which is possible only if $\pi$ is the trivial group.
\end{proof}
\begin{proof}[Proof of Theorem \ref{thm:fukayaseidelsmith}]
Theorem \ref{malgthinstregbam} implies that $L$ is a $\Z_2$-homology disc and thus, in particular, orientable and spin. Since we did not discuss orientation issues for moduli spaces here, we refer to the version  of Theorem \ref{malgthinstregbam} for integer coefficients in \cite[Theorem 1.4]{OnHolRegFlexEndoc}, which implies that $L$ in fact is a $\Z$-homology disc. Proposition \ref{sec:high-dimens-gener} can now be applied, showing that $L$ moreover is simply connected. In conclusion, we have shown that $L$ is contractible.

Assume now that there exists an exact filling $L$ of $\Lambda$. Since $L$, and hence $\Lambda$, is orientable and has vanishing Maslov class by the above, Theorem \ref{malgthinstregbam} shows that the unique Reeb chord of $\Lambda$ is of degree $n$. We can thus apply \cite[Theorem 1.1]{EkholmSmith} to show that $\Lambda$ is a homotopy sphere.

For $n \ne 3,4$, it follows that $(\overline{L},\partial\overline{L})$ is diffeomorphic to $(D^{n+1},\partial D^{n+1})$ and, in particular, $\Lambda$ is thus the standard sphere. For $n=1$ this follows by elementary methods while, in high dimensions, this follows from Smale's h-cobordism theorem \cite{hcob}. In the case $n=2$ the same is true due to Perelman's positive answer to the Poincar\'{e} conjecture \cite{PoincareConjecturePerelman}.
The smooth Poincar\'{e} conjecture for smooth homotopy 5-spheres, as proven by  \cite{KervaireMilnor}, shows that this also holds when $n=4$ under the additional assumption that $\Lambda$ is diffeomorphic to the standard sphere.
When $n=3$, $\Lambda$ is diffeomorphic to $S^3$ by \cite{PoincareConjecturePerelman} and  $(\overline{L},\partial\overline{L})$ is  homeomorphic to $(D^{4},\partial D^{4})$ because any $4$-dimensional homotopy sphere is {\em homeomorphic} to $S^4$ by Freidman's theorem \cite{Freedman4dimHomeo}.

\end{proof}

\begin{proof}[Alternative proof of Theorem \ref{thm:fukayaseidelsmith}]
The above theorem can also be proven by combining \cite[Theorem 1.1]{EkholmSmith} with Seidel's isomorphism. Observe that the former theorem is proven using linearised Legendrian contact homology with local coefficients. First, observe that Theorem \ref{malgthinstregbam} implies that $L$ is a $\Z_2$-homology disc and, hence, both $\Lambda$ and $L$ are oriented and spin. A version of Seidel's isomorphism for coefficients in $\Z$ (see e.g.~\cite[Theorem 1.4]{OnHolRegFlexEndoc}) shows that $L$ is a $\Z$-homology disc. It follows that both $L$ and $\Lambda$ have vanishing Maslov classes and that the Reeb chord of $\Lambda$ is in degree $n$. \cite[Theorem 1.1]{EkholmSmith} can now be applied, showing that $\Lambda$ is a homotopy sphere.

In order to conclude we need the following lemma.
\begin{Lem}\label{sec:high-dimens-gener-1}
  Let $L$ be an exact Lagrangian filling of a Legendrian submanifold $\Lambda\subset \R^{2n+1}$, and let $\overline{L}=L\cap (-\infty,-M]\times \R^{2n+1}$ be its compact part, where $M>0$. Then there exists a Lagrangian immersion $L_2:=\overline{L}\cup_\Lambda\overline{L}\hookrightarrow \R\times\R^{2n+1}$ such that $L_2\cap  (-\infty,-M]\times \R^{2n+1}=\overline{L}$. Moreover, there is a natural bijective correspondence between the Reeb chords on $\Lambda$ and the double-points of $L_2$ which increases the grading by one (the Reeb chords on $\Lambda$ are generic if and only if the double-points of $L_2$ are transverse).
\end{Lem}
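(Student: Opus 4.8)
The plan is to realise $L_2$ as the \emph{Lagrangian double} of the filling $L$, by a folding construction in the spirit of \cite[Section~2]{EkholmSmith} (see also \cite{ExoticSpheres}). The point is to leave the compact part $\overline{L}$ exactly where it is, so that it becomes one of the two sheets of $L_2$ and occupies $(-\infty,-M]\times\R^{2n+1}$, and to glue to it along $\Lambda$ a second sheet which is an \emph{immersed} exact Lagrangian cap of $\Lambda$ contained in $\{t\ge -M\}$ and cylindrical over $\Lambda$ near its boundary; the double points of $L_2$ are then precisely the self-intersections of this second sheet. Note that the second sheet cannot be embedded, since an embedded exact Lagrangian cap glued to the filling would produce a closed exact Lagrangian submanifold of $\R\times\R^{2n+1}\cong\C^{n+1}$, contradicting Gromov; the construction is thus \emph{forced} to create double points, and the content of the lemma is that the natural fold creates exactly one for each Reeb chord.

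First I would normalise $L$ near the Legendrian end: after a translation and a Hamiltonian isotopy supported near $\partial\overline{L}$ we may assume $L$ coincides with the trivial Lagrangian cylinder $[-M-3\delta,-M]\times\Lambda$ throughout $\{-M-3\delta\le t\le -M\}$, and I would fix an identification, provided by the Weinstein neighbourhood theorem for the Lagrangian cylinder $\R\times\Lambda$ (equivalently, by symplectising the $1$-jet model for a neighbourhood of $\Lambda\subset\R^{2n+1}$), of a neighbourhood of $\R\times\Lambda$ with a neighbourhood of the zero section in $T^{*}\R\times T^{*}\Lambda$; under it $\R\times\Lambda$ is the zero section, the symplectisation direction and the Reeb direction span the $T^{*}\R$-factor, and the collar of $\overline{L}$ is the piece of the zero section over $\{t\le -M\}$. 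I would then build the second sheet inside this model: starting from a cylinder over $\Lambda$ glued to $\Lambda$, I let it ``turn over'' by rotating in the $T^{*}\R$-factor, sweeping the half-line that points into $\{t\le -M\}$ to the half-line that points into $\{t\ge -M\}$, and finally cap it off by a suitable translate of $\overline{L}$ chosen so that the sheet becomes compact and stays inside $\{t\ge -M\}$. Gluing the two sheets along $\Lambda$ yields a smooth closed immersed Lagrangian $L_2$ diffeomorphic to $\overline{L}\cup_{\Lambda}\overline{L}$ with $L_2\cap(-\infty,-M]\times\R^{2n+1}=\overline{L}$; exactness of $L$ is inherited by each sheet, and the two primitives of $e^{t}\alpha_{0}|_{L_2}$ are matched along $\Lambda$ because $\alpha_{0}$ vanishes there, so $L_2$ is in fact an exact Lagrangian immersion.

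The main work --- and the step I expect to be the real obstacle --- is to show that the self-intersections produced by the fold are in natural bijection with the Reeb chords of $\Lambda$, with the grading raised by one, and that transversality of the double points corresponds to chord-genericity. For this I would carry out a local computation in the $1$-jet model, where the Reeb flow is translation in one coordinate and the fold is a $\pi$-rotation in the plane it spans together with the symplectisation coordinate: a self-intersection of the swept family of (near-)cylinders over $\Lambda$ occurs exactly when two points of $\Lambda$ are joined by a segment of a Reeb trajectory, i.e.\ at a Reeb chord --- equivalently, at a double point of $\Pi_{\operatorname{Lag}}(\Lambda)$ --- and a comparison of the two tangent planes at such a point shows the intersection is transverse if and only if that double point of $\Pi_{\operatorname{Lag}}(\Lambda)$ is transverse. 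The $+1$ in the grading is the contribution of the extra symplectisation direction introduced by the fold; it is the same shift that appears in the identification $CW^{\infty}_{\bullet}(L_{0},L_{1})=LCC^{\bullet-1}_{\varepsilon_{0},\varepsilon_{1}}(\Lambda_{0},\Lambda_{1})$ of Section~\ref{wfh}, and I would pin it down by comparing the Conley--Zehnder index of the linearised return map of the chord with the Maslov index attached to the corresponding double point.
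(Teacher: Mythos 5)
Your overall strategy is the same as the paper's (double the filling by gluing an upside-down immersed copy along $\Lambda$, and match double points with Reeb chords), but the two steps you leave vague are exactly where the content of the lemma lies, and as written they do not go through. First, the cap: ``a suitable translate of $\overline{L}$'' cannot close up the folded cylinder from above, since any translate of $\overline{L}$ (in the symplectisation direction or by a Hamiltonian isotopy) is still a filling lying \emph{below} its Legendrian boundary. To obtain the upper sheet one must turn $\overline{L}$ upside down by an explicit symplectomorphism reversing $t$; the paper uses $\phi(t,\mathbf{x},\mathbf{y},z)=(-t,\mathbf{x},e^{2t}\mathbf{y},-e^{2t}z)$, and note that this necessarily reverses the sign of the $z$-coordinate (and rescales $\mathbf{y}$) along the boundary, so the boundary of the flipped copy is \emph{not} $\Lambda$ itself; the interpolating neck between $\Lambda$ and this flipped boundary is where all the double points are created. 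Second, and more seriously, neither the fold nor the double-point analysis can be carried out ``inside the Weinstein/$1$-jet model of a neighbourhood of $\R\times\Lambda$'': a $\pi$-rotation taking the base direction of the $T^*\R$-factor into the fibre direction leaves any small neighbourhood of the zero section, and, decisively, the double points you must produce correspond to Reeb chords of $\Lambda$, whose endpoints are in general far apart in $\R^{2n+1}$; such pairs are invisible in a tubular neighbourhood of $\R\times\Lambda$, so no local computation there can establish the bijection with $\mathcal{Q}(\Lambda)$. In addition, the ``swept family'' of rotated cylinders is not automatically Lagrangian (the trace of a Lagrangian isotopy is Lagrangian in the product with $T^*[0,1]$, not in the same symplectic manifold), so the Lagrangian and exactness properties of your interpolating sheet are asserted rather than proved.

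For comparison, the paper's proof is completely explicit at these points: after flipping $\overline{L}$ by $\phi$, the two embedded sheets are joined by the immersed cylinder $(t,\mathbf{s})\mapsto\bigl(t,\mathbf{x}(\mathbf{s}),e^{-t}f(e^t)\mathbf{y}(\mathbf{s}),\rho(e^t)z(\mathbf{s})\bigr)$, where $f'=\rho$, $f>0$, and $\rho$ interpolates between $1$ and $-1/t^2$ with a single transverse zero. One checks directly that this is an exact Lagrangian immersion matching $\overline{L}$ and $\phi(\overline{L})$ on the two ends, and that its only self-intersections occur in the single slice where $\rho$ vanishes: there the $z$-coordinate is collapsed and the slice is a rescaled copy of $\Pi_{\operatorname{Lag}}(\Lambda)$, so the double points are exactly the double points of the Lagrangian projection, i.e.\ the Reeb chords, and they are transverse precisely when the chords are generic. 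It is this collapse of the global Reeb coordinate, not a local rotation near $\Lambda$, that produces the bijection. Your proposed Conley--Zehnder comparison for the degree shift by one is reasonable (the paper leaves that point at a similar level of detail), but the construction itself needs to be made explicit along the above lines before that step is meaningful.
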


\begin{proof}
  Let $\phi:\R\times\R^{2n+1}\rightarrow \R\times\R^{2n+1}$ be the symplectomorphism defined by $(t,\mathbf{x},\mathbf{y},z) \mapsto (-t,\mathbf{x},e^{2t}\mathbf{y},-e^{2t}z)$. We denote by $\overline{L}'$ the image of $\overline{L}$ by $\phi$. It lies in $[M,\infty)\times\R^{2n+1}$. In order to get $L_2$, we connect $\overline{L}$ and $\overline{L}'$ by a Lagrangian immersion of $[-M-\varepsilon,M+\varepsilon]\times\Lambda$ constructed in the following way. Let $\rho:[e^{-M-\varepsilon},e^{M+\varepsilon}]\rightarrow \mathbb{R}$ be a smooth function satisfying the following properties:
  \begin{enumerate}
  \item $\rho(t)=1$ for $t\leq e^{-M}$;
\item $\rho(t)=-\frac{1}{t^2}$ for $t\geq e^M$;
\item $\rho(t)$ vanishes exactly at $t=t_0$, where $\rho'(t_0)\not=0$;
\item $\rho(t)$ admits a primitive $f(t) > 0$ satisfying $f(t)=t$ for $t\leq e^{-M}$, and $f(t)=\frac{1}{t}$ for $t\geq e^M$.
  \end{enumerate}

Use $\Lambda(\mathbf{s})=(\mathbf{x}(\mathbf{s}),\mathbf{y}(\mathbf{s}),z(\mathbf{s}))$ to denote a parametrisation of $\Lambda$. The required immersion is now given by $(t,\mathbf{s})\rightarrow (t,\mathbf{x}(\mathbf{s}),e^{-t}f(e^t)\mathbf{y}(\mathbf{s}),\rho(e^t)z(\mathbf{s}))$. Note that since $\Lambda$ is an embedding, the double points of this cylinder are all contained in the unique level-set $\{ t=t_0\}$ where $\rho(t_0)=0$. Moreover, there is a bijective correspondence between these double points and the double points of the Lagrangian projection $(\mathbf{x}(\mathbf{s}),\mathbf{y}(\mathbf{s}))$ which, in turn, correspond bijectively to the Reeb chord of $\Lambda$.
\end{proof}

Now we conclude the proof of Theorem \ref{thm:fukayaseidelsmith}. Lemma \ref{sec:high-dimens-gener-1} applied to the filling $L$ produces an exact Lagrangian immersion inside $\R^{2n+2}$ of a closed manifold having a single transverse double-point of degree $n+1$. An application of \cite[Theorem 1.1]{EkholmSmith} shows that the latter manifold is a homotopy-sphere, which finally shows that $L$ is contractible.
\end{proof}

We conclude this section with the proof of Corollary \ref{sec:introduction-cor}:

\begin{proof}[Proof of Corollary \ref{sec:introduction-cor}] Let $W$ denote a filling of $\Lambda^-$, which can be seen as the concatenation of the standard filling $L_0$ of $\Lambda_0$ and an exact Lagrangian cobordism $V$ from $\Lambda_0$ to $\Lambda^-$. Theorem \ref{thm:fukayaseidelsmith} readily implies that the concatenation $V \odot L$ of $V$ and $L$ is a concordance. Here one must use the h-cobordism theorem in the case $n>4$ and the solution of the (generalised) Poincar\'{e} conjecture in the cases $n=3,4$ (see the proof of Theorem \ref{thm:fukayaseidelsmith}). Theorem \ref{thm:main} applied to $V \odot L$ implies that the map induced in linearised LCH is an isomorphism. Using the fact that the DGAs of both $\Lambda^0$ and $\Lambda^+$ have a single generator and hence vanishing differential , we conclude that
$$ \Phi_{V \odot L}=\Phi_V \circ \Phi_L $$
is an isomorphism of DGAs.

In the case when $\Lambda^-$ satisfies the same assumptions as $\Lambda^-$, the argument above shows that $L$ is a concordance and that $\Phi_L$ is an isomorphism.
\end{proof}

\section{Non-symmetry for Lagrangian concordances in high dimensions}
\label{sec:nonsymm}

\begin{figure}[hftp]
\labellist \small\hair 2pt \pinlabel {$a_1$} [bl] at 515 173
\pinlabel {$a_2$} [bl] at 531 76 \pinlabel {$a_3$} [bl] at 397 144
\pinlabel {$b_6$} [bl] at 482 150 \pinlabel {$a_4$} [bl] at 366 175
\pinlabel {$b_5$} [bl] at 362 115 \pinlabel {$b_3$} [bl] at 338 199
\pinlabel {$b_4$} [bl] at 326 145 \pinlabel {$c_1$} [bl] at 303 168
\pinlabel {$c_2$} [bl] at 258 40 \pinlabel {$a_5$} [bl] at 207 180
\pinlabel {$b_1$} [bl] at 158 230 \pinlabel {$b_2$} [bl] at 116 138
\endlabellist
  \centering
  \includegraphics[height=3.5cm]{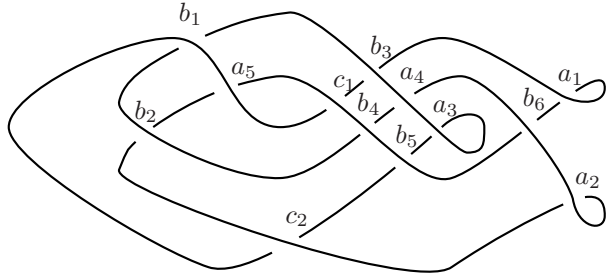}
  \caption{The Lagrangian projection of $\Lambda_{m(9_{46})}$, cf.~Figure 5 in \cite{LagrConcordNotASymmRel}.}
  \label{fig:LCHlag}
\end{figure}

Let $\Lambda_0 \subset (\R^3, \xi_{st})$ be the Legendrian unknot with
Thurston-Bennequin invariant ${\tt tb}(\Lambda_0)=-1$ and $\Lambda =
\Lambda_{m(9_{46})}\subset (\R^3, \xi_{st})$ the Legendrian
representative of the knot $9_{46}$ whose Lagrangian projection is
described in Figure~\ref{fig:LCHlag}. The chords on $\Lambda$ are
denoted by $a_i$, $b_j$ and $c_k$, where $i=1,\dots,5$, $j=1,\dots,
6$ and $k=1,2$.

In \cite{LagrConcordNotASymmRel},  Chantraine exhibited a Lagrangian concordance from $\Lambda_0$ to $\Lambda_{m(9_{46})}$ and proved that a Lagrangian concordance from $\Lambda_{m(9_{46})}$ to $\Lambda_0$ cannot exist. This
example showed that Lagrangian concordance is not a symmetric relation in dimension
$3$. In this section, we will apply the front spinning construction of Section \ref{sec:front-spinn-constr} to Chantraine's examples to produce new examples of the non-symmetry of the Lagrangian concordance relation in high dimensions.

\begin{Rem}
From the results of Eliahsberg-Murphy~\cite{LagCaps} it follows that
there are many ``flexible examples'' of non-invertible exact
Lagrangian concordances, but none of those examples has a fillable
negative end. The examples we provide %in this section
are ``rigid''
non-invertible exact Lagrangian concordances.
\end{Rem}

%We denote by $L_{S^n} \Lambda_0$ and $L_{S^n} \Lambda$ the
%Legendrian submanifolds of $C^{n+1} \times \R$ obtained by spinning $\Lambda_0$
%and $\Lambda$ respectively along $S^n$. (Should we recall the definition?)
%\begin{Prop}
%For any $n \ge 1$ there is a Lagrangian concordance $C$ from $L_{S^n}
%\Lambda_0$ to $L_{S^n} \Lambda$.
%\end{Prop}
%\begin{proof}
%We spin the Lagrangian concordance from $\Lambda_0$ to $\Lambda$.
%\end{proof}
\begin{Thm}\label{thm:onespintooldexample}
There exists a Lagrangian concordance from $\Sigma_{S^m}\Lambda_0$ to $\Sigma_{S^m} \Lambda$. However, there is no exact
Lagrangian concordance from $\Sigma_{S^m} \Lambda$ to $\Sigma_{S^m}
\Lambda_0$.
\end{Thm}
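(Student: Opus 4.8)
\emph{The easy direction and the strategy.} The $S^m$-front spinning construction of Section~\ref{sec:front-spinn-constr} sends exact Lagrangian cobordisms to exact Lagrangian cobordisms, and it manifestly carries a concordance $\overline{L}\cong[0,1]\times\Lambda^-$ to the concordance $[0,1]\times\Sigma_{S^m}\Lambda^-$. Applying it to Chantraine's exact Lagrangian concordance from $\Lambda_0$ to $\Lambda=\Lambda_{m(9_{46})}$ from~\cite{LagrConcordNotASymmRel} therefore produces the asserted concordance from $\Sigma_{S^m}\Lambda_0$ to $\Sigma_{S^m}\Lambda$. Note that $\Sigma_{S^m}\Lambda$ and $\Sigma_{S^m}\Lambda_0$ are abstractly diffeomorphic (both are $S^1\times S^m$), so the obstruction to a concordance in the reverse direction is genuinely Floer-theoretic; I would extract it by combining Theorem~\ref{thm:main}, the K\"unneth formula of Theorem~\ref{kunnethformulaspinbilinearised}, and the three-dimensional computation of~\cite{LagrConcordNotASymmRel}.

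\emph{Computing the source.} Suppose, for a contradiction, that $L$ is an exact Lagrangian concordance from $\Sigma_{S^m}\Lambda$ to $\Sigma_{S^m}\Lambda_0$. The Legendrian knot $\Lambda=\Lambda_{m(9_{46})}$ admits two exact Lagrangian fillings $F_0,F_1$ whose induced augmentations $\varepsilon_0,\varepsilon_1$ satisfy $\dim_{\Z_2} LCH^\bullet_{\varepsilon_0,\varepsilon_1}(\Lambda)>1$; this is precisely the hypothesis under which the corollary following Theorem~\ref{thm:main} reproves Chantraine's non-symmetry result, and it is checked by the explicit computation in~\cite{LagrConcordNotASymmRel} (see also~\cite{Bilinearised}). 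The $S^m$-spins $\Sigma_{S^m}F_0$ and $\Sigma_{S^m}F_1$ are then exact Lagrangian fillings of $\Sigma_{S^m}\Lambda$, with induced augmentations $\widetilde\varepsilon_0,\widetilde\varepsilon_1$, and Theorem~\ref{kunnethformulaspinbilinearised} gives
\[ LCH^\bullet_{\widetilde\varepsilon_0,\widetilde\varepsilon_1}(\Sigma_{S^m}\Lambda)\ \simeq\ LCH^\bullet_{\varepsilon_0,\varepsilon_1}(\Lambda)\otimes H_\bullet(S^m;\Z_2), \]
a graded $\Z_2$-vector space of total dimension $2\dim_{\Z_2} LCH^\bullet_{\varepsilon_0,\varepsilon_1}(\Lambda)>2$.

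\emph{Computing the target and concluding.} The standard Legendrian unknot $\Lambda_0\subset(S^3,\xi_{st})$ has a single Reeb chord, in degree $n=1$, so after a small Morse perturbation in the $S^m$-direction $\Sigma_{S^m}\Lambda_0$ is chord generic with exactly two Reeb chords, in degrees $1$ and $1+m$. Since $\Sigma_{S^m}\Lambda_0$ has vanishing rotation number, its Chekanov--Eliashberg algebra is $\Z$-graded, so any augmentation vanishes on both generators; thus $\Sigma_{S^m}\Lambda_0$ admits a \emph{unique} augmentation $\widetilde\varepsilon$, which coincides with the one induced by the $S^m$-spin of the standard disc filling of $\Lambda_0$. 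In particular the two augmentations $\widetilde\varepsilon_0\circ\Phi_L$ and $\widetilde\varepsilon_1\circ\Phi_L$ appearing in the target of Theorem~\ref{thm:main} both equal $\widetilde\varepsilon$, and applying Theorem~\ref{kunnethformulaspinbilinearised} to $\Lambda_0$ — whose linearised cohomology $LCH^\bullet_\varepsilon(\Lambda_0)$ is one-dimensional — shows that $LCH^\bullet_{\widetilde\varepsilon,\widetilde\varepsilon}(\Sigma_{S^m}\Lambda_0)\simeq H_\bullet(S^m;\Z_2)$ has total dimension $2$. But Theorem~\ref{thm:main}, applied to $L$ with the fillings $\Sigma_{S^m}F_0,\Sigma_{S^m}F_1$, asserts that the induced map
\[ \Phi^{\widetilde\varepsilon_0,\widetilde\varepsilon_1}_L\co LCH^\bullet_{\widetilde\varepsilon_0,\widetilde\varepsilon_1}(\Sigma_{S^m}\Lambda)\ \longrightarrow\ LCH^\bullet_{\widetilde\varepsilon,\widetilde\varepsilon}(\Sigma_{S^m}\Lambda_0) \]
is an isomorphism; since its source has dimension $>2$ while its target has dimension $2$, this is absurd, and no such $L$ can exist.

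\emph{The main obstacle.} Everything here is formal except for one input: that front spinning preserves concordances and fillings is recalled in Section~\ref{sec:front-spinn-constr}, the degree count for $\Sigma_{S^m}\Lambda_0$ is elementary, and both dimension computations follow immediately from the K\"unneth formula of Theorem~\ref{kunnethformulaspinbilinearised}. The point that must genuinely be pinned down is the three-dimensional fact that $\Lambda_{m(9_{46})}$ carries two exact Lagrangian fillings whose associated pair of augmentations has bilinearised contact cohomology of dimension greater than one; this is where the argument relies on Chantraine's explicit computation in~\cite{LagrConcordNotASymmRel} (equivalently, on verifying the hypotheses of the corollary to Theorem~\ref{thm:main} for $\Lambda_{m(9_{46})}$).
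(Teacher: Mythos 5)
Your construction of the concordance from $\Sigma_{S^m}\Lambda_0$ to $\Sigma_{S^m}\Lambda$ and your overall strategy for the obstruction (spin the two fillings of $\Lambda$, apply Theorem \ref{thm:main} to a hypothetical concordance, and compare with $\Sigma_{S^m}\Lambda_0$ via Theorem \ref{kunnethformulaspinbilinearised}) are the same as in the paper. The difference lies in how you derive the final contradiction, and that is where there is a genuine gap: you argue by comparing \emph{total dimensions}, which requires $\dim_{\Z_2}LCH^\bullet_{\varepsilon_0,\varepsilon_1}(\Lambda)>1$. This is not ``precisely the hypothesis'' of the corollary to Theorem \ref{thm:main}; that hypothesis is that the bilinearised cohomology is not one-dimensional \emph{and concentrated in degree $n$}, which is strictly weaker than having dimension greater than one. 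More importantly, the fact actually established by Chantraine's computation and quoted in the paper is only $LCH^{-1}_{\varepsilon_0,\varepsilon_1}(\Lambda)\neq 0$. This does not imply total dimension $>1$: the complex has $2$ generators in degree $-1$, $6$ in degree $0$ and $5$ in degree $1$, so its graded Euler characteristic is $-1$, and a one-dimensional cohomology concentrated in degree $-1$ is compatible with everything you cite. If that were the case, your count (source of dimension $>2$ versus target of dimension $2$) would collapse, so as written the crux of your argument rests on an unestablished input.

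The repair is to use the grading rather than the rank, which is exactly what the paper does: by Theorem \ref{kunnethformulaspinbilinearised}, $LCH^{-1}_{\widetilde\varepsilon_0,\widetilde\varepsilon_1}(\Sigma_{S^m}\Lambda)\neq 0$ because $LCH^{-1}_{\varepsilon_0,\varepsilon_1}(\Lambda)\neq 0$, while $\Sigma_{S^m}\Lambda_0$, having (after the perturbation you describe) only two Reeb chords, in degrees $1$ and $1+m$, satisfies $LCH^{i}_{\varepsilon'_0,\varepsilon'_1}(\Sigma_{S^m}\Lambda_0)=0$ for all $i<0$ and \emph{every} pair of augmentations $\varepsilon'_0,\varepsilon'_1$; Theorem \ref{thm:main} then yields the contradiction with no knowledge of total dimensions, and without needing your (correct, but then superfluous) observation that $\Sigma_{S^m}\Lambda_0$ carries a unique augmentation. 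The remaining steps of your proposal --- spinning Chantraine's concordance and the two fillings, and the application of the K\"unneth formula --- agree with the paper and are fine.
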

\begin{proof}
Consider $\Sigma_{S^m} \Lambda_0$ and $\Sigma_{S^m} \Lambda$. Since
the first author in \cite{LagrConcordNotASymmRel} has constructed an
exact Lagrangian concordance $C$ from $\Lambda_0$ to $\Lambda$,
\cite[Proposition 1.1]{NoteOnFrontSpinning} implies that there
exists a Lagrangian concordance $\Sigma_{S^m} C$
from $\Sigma_{S^m} \Lambda_0$ to $\Sigma_{S^m} \Lambda$.
Now we prove that there is no exact Lagrangian concordance from
$\Sigma_{S^m}\Lambda$ to $\Sigma_{S^m}\Lambda_0$.

First, observe that $\Sigma_{S^m}\Lambda_0$ has an exact Lagrangian filling diffeomorphic to $D^2 \times S^m$ which is obtained by spinning the exact Lagrangian disc filling $L_{\Lambda_0}$ of $\Lambda_0$, and hence $(\mathcal A(\Sigma_{S^m}\Lambda_0),\partial_{\Sigma_{S^m}\Lambda_0})$ admits an augmentation.
After an explicit Legendrian perturbation of $\Sigma_{S^m}\Lambda_0$, see e.g.~\cite[Section 3.1]{EstimNumbrReebChordLinReprCharAlg}, the resulting Legendrian submanifold has exactly two transverse Reeb chords, one in degree $1$ and one in degree $1+m$. In particular, given any pair $\varepsilon'_0$, $\varepsilon'_1$ of augmentations, we have $LCH^i_{\varepsilon'_0, \varepsilon'_1} (\Sigma_{S^m}\Lambda_0)=0$ whenever $i < 0$.

Then, note that
$(\mathcal A(\Lambda),\partial_\Lambda)$ admits
several augmentations and at least two of those, namely
$\varepsilon_0$ and $\varepsilon_1$ which were discussed in the proof of \cite[Theorem 1.1]{LagrConcordNotASymmRel}, are induced by
exact Lagrangian fillings of $\Lambda$.
We call these fillings $L_0$ and $L_1$ respectively.
As shown in \cite{LagrConcordNotASymmRel},
$L_{\Lambda}^{\varepsilon_0}=L_{\Lambda_0}\odot C$, while
$L_{\Lambda}^{\varepsilon_1}$ is described in Figure~\ref{fig:notriv}
where the last move consist of capping the two components with
a Lagrangian disc filling of $\Lambda_0$.

\begin{figure}[hftp]
  \centering
  \includegraphics[height=9cm]{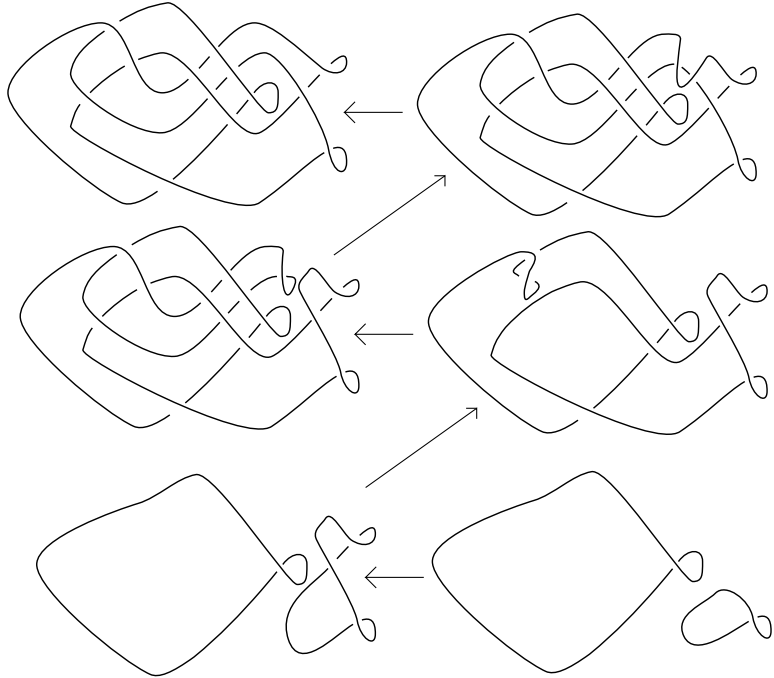}
  \caption{The exact Lagrangian filling
$L_{\Lambda}^{\varepsilon_1}$ decomposed into elementary Lagrangian
handle attachments, where the last two correspond to capping off the
unknots described in the lower right corner.}\label{fig:notriv}
\end{figure}

Those qualitative moves of Lagrangian
projections are obtain from moves on fronts diagram by the process explained in the beginning of Section 3 in \cite{LagrConcordNotASymmRel} and thus represents a sequence of elementary Lagrangian cobordisms. Also, we here rely heavily on \cite{LegKnotsLagCob} for the computations
of the induced augmentations. Recall that $|a_i|=1$, $|b_j|=0$ and $|c_k|=-1$ for
$i=1,\dots,5$, $j=1,\dots, 6$ and $k=1,2$. The
  augmentations $\varepsilon_i$, $i=0,1$, are described as
  follows:
\begin{itemize}
  \item $\varepsilon_0(b_{2})=\varepsilon_0(b_4)=\varepsilon_0(b_5)=1$ and $\varepsilon_0$ vanishes on all other generators;
  \item $\varepsilon_1(b_1)=\varepsilon_1(b_3)=\varepsilon_1(b_6)=1$ and
  $\varepsilon_1$ vanishes on all other generators.
\end{itemize}

Following  \cite[proof of Theorem 1.1]{LagrConcordNotASymmRel}, we observe that
 $LCH_{\varepsilon_0, \varepsilon_1}^{-1}(\Lambda)\neq 0$.
From \cite[Proposition 1.1]{NoteOnFrontSpinning}
it follows that there exist exact
Lagrangian fillings $\Sigma_{S^m}L_0$ and $\Sigma_{S^m}L_1$ of
$\Sigma_{S^m}\Lambda$ which induce augmentations $\tilde{\varepsilon}_0$ and $\tilde{\varepsilon}_1$ respectively.

Now assume by contradiction that there exists a Lagrangian concordance
$C'$ from $\Sigma_{S^m}\Lambda$ to $\Sigma_{S^m}\Lambda_0$, and let $\Phi_{C'}:\mathcal A(\Sigma_{S^m}\Lambda)\to \mathcal A(\Sigma_{S^m}\Lambda_0)$ be the induced map between the Chekanov-Eliashberg algebras. Then Theorem~\ref{thm:main} implies
that $LCH_{\tilde{\varepsilon}_0,\tilde{\varepsilon}_1}^{\bullet}(\Sigma_{S^m}\Lambda)$ is isomorphic to $LCH_{\tilde{\varepsilon}_0\circ\Phi_{C'},\tilde{\varepsilon}_1\circ\Phi_{C'}}^{\bullet}(\Sigma_{S^m}\Lambda_0)$. The K\"unneth-type
formula (Theorem~\ref{kunnethformulaspinbilinearised}) implies that
$$(LCH_{\tilde{\varepsilon}_0,\tilde{\varepsilon}_1}^{\bullet}(\Sigma_{S^m}\Lambda))\simeq (LCH_{\varepsilon_0,\varepsilon_1}^{\bullet}(\Lambda)) \otimes (H_\bullet(S^m)).$$

In particular, $LCH_{\tilde{\varepsilon}_0,\tilde{\varepsilon}_1}^{-1}(\Sigma_{S^m}\Lambda)\neq 0$ because $LCH_{\varepsilon_0,\varepsilon_1}^{-1}(\Lambda)\neq 0$. This gives a
contradiction because, as we have already mentioned, $LCH_{\varepsilon'_0,\varepsilon'_1}^{i}(\Sigma_{S^m}\Lambda_0)=0$ for any pair of augmentations $\varepsilon'_0$, $\varepsilon'_1$ whenever $i < 0$.

\end{proof}
By making repeated $S^m$-spins of $\Lambda$ and $\Lambda_0$, the
proof of the above theorem generalises to the proof of the following
statement.
\begin{Prop}\label{prop:manyspinstooldexample}
For every $m_1,\dots, m_k\in \mathbb N$ there exist fillable Legendrian submanifolds
$\Lambda_1, \Lambda_2\subset (\R^{2(1+\sum_i m_i)+1}, \xi_{st})$
diffeomorphic to $S^1\times S^{m_1}\times \dots \times S^{m_k}$ with
the property that
\begin{itemize}
\item there is a Lagrangian concordance from $\Lambda_1$ to $\Lambda_2$;
\item there is no Lagrangian concordance from $\Lambda_2$ to
$\Lambda_1$.
\end{itemize}
\end{Prop}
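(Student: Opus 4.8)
The plan is to run the argument of Theorem \ref{thm:onespintooldexample} with a single spin replaced by the $k$-fold iterated spin, and with the K\"unneth formula of Theorem \ref{kunnethformulaspinbilinearised} applied repeatedly. Write $\Sigma := \Sigma_{S^{m_k}}\circ\cdots\circ\Sigma_{S^{m_1}}$ for the composition of the front spinning constructions of Section \ref{sec:front-spinn-constr}, and set $\Lambda_1 := \Sigma\Lambda_0$ and $\Lambda_2 := \Sigma\Lambda$, where $\Lambda_0,\Lambda\subset(\R^3,\xi_{st})$ are as in the previous theorem. Both $\Lambda_0$ and $\Lambda$ are diffeomorphic to $S^1$, so $\Lambda_1$ and $\Lambda_2$ are diffeomorphic to $S^1\times S^{m_1}\times\cdots\times S^{m_k}$ and sit inside $(\R^{2(1+\sum_i m_i)+1},\xi_{st})$. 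Applying \cite[Proposition 1.1]{NoteOnFrontSpinning} repeatedly — spinning sends exact Lagrangian cobordisms to exact Lagrangian cobordisms, hence fillings to fillings and concordances to concordances — to the disc filling $L_{\Lambda_0}$ of $\Lambda_0$, to the fillings $L_0,L_1$ of $\Lambda$ from \cite[proof of Theorem 1.1]{LagrConcordNotASymmRel}, and to Chantraine's concordance $C$ from $\Lambda_0$ to $\Lambda$, shows that $\Lambda_1$ and $\Lambda_2$ are fillable and that $\Sigma C$ is a Lagrangian concordance from $\Lambda_1$ to $\Lambda_2$. This gives the first bullet point.

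For the second bullet point, I would first record that $LCH^i_{\varepsilon'_0,\varepsilon'_1}(\Lambda_1)=0$ for every pair of augmentations $\varepsilon'_0,\varepsilon'_1$ whenever $i\le 0$. This is a Legendrian isotopy invariant statement by \cite[Theorem 1.1]{Bilinearised}, so it suffices to check it for one chord-generic representative. Since iterated spinning produces $\Lambda_0\times 0_{S^{m_1}\times\cdots\times S^{m_k}}$, an explicit Legendrian perturbation as in \cite[Section 3.1]{EstimNumbrReebChordLinReprCharAlg}, using the product of the standard two-critical-point Morse functions on the spheres $S^{m_i}$, yields a representative whose Reeb chords are indexed by subsets $I\subseteq\{1,\dots,k\}$, the chord labelled by $I$ having degree $1+\sum_{i\in I}m_i\ge 1$. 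Hence for any augmentations the bilinearised complex $LCC^\bullet_{\varepsilon'_0,\varepsilon'_1}(\Lambda_1)$ is supported in degrees $\ge 1$, so its cohomology vanishes in degrees $\le 0$.

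On the other hand, let $\tilde\varepsilon_0,\tilde\varepsilon_1$ be the augmentations of $\mathcal{A}(\Lambda_2)$ induced by the fillings $\Sigma L_0,\Sigma L_1$, where $L_0,L_1$ induce the augmentations $\varepsilon_0,\varepsilon_1$ of $\mathcal{A}(\Lambda)$ with $LCH^{-1}_{\varepsilon_0,\varepsilon_1}(\Lambda)\ne 0$ from \cite{LagrConcordNotASymmRel}. Iterating the K\"unneth-type formula of Theorem \ref{kunnethformulaspinbilinearised} — each application being legitimate because the augmentations in play are, by construction, induced by spins of fillings — gives
\[ LCH^\bullet_{\tilde\varepsilon_0,\tilde\varepsilon_1}(\Lambda_2)\simeq LCH^\bullet_{\varepsilon_0,\varepsilon_1}(\Lambda)\otimes H_\bullet(S^{m_1};\Z_2)\otimes\cdots\otimes H_\bullet(S^{m_k};\Z_2), \]
and since each factor $H_\bullet(S^{m_i};\Z_2)$ contains a class in degree $0$ we get $LCH^{-1}_{\tilde\varepsilon_0,\tilde\varepsilon_1}(\Lambda_2)\ne 0$. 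Now suppose for contradiction that there is a Lagrangian concordance $C'$ from $\Lambda_2$ to $\Lambda_1$, with induced DGA morphism $\Phi_{C'}$. Theorem \ref{thm:main} provides an isomorphism $LCH^\bullet_{\tilde\varepsilon_0,\tilde\varepsilon_1}(\Lambda_2)\cong LCH^\bullet_{\tilde\varepsilon_0\circ\Phi_{C'},\,\tilde\varepsilon_1\circ\Phi_{C'}}(\Lambda_1)$, whose source is non-zero in degree $-1$ while its target vanishes there by the previous paragraph — a contradiction, proving the second bullet point.

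The only point requiring genuine care beyond Theorem \ref{thm:onespintooldexample} is the bookkeeping in the second paragraph: verifying that after a suitable perturbation the iterated spin of the unknot still has all of its Reeb chords in strictly positive degree, and that the K\"unneth isomorphism and the ``spin of a cobordism is a cobordism'' statement may be applied $k$ times in succession. This is not a real obstacle, since each spin only introduces a tensor factor $H_\bullet(S^{m_i};\Z_2)$ and shifts degrees by the non-negative Morse indices; the argument is thus a direct iteration of the one already given.
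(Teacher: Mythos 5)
Your argument is exactly the paper's: the paper proves this proposition by the single sentence that repeated $S^m$-spins of $\Lambda_0$ and $\Lambda_{m(9_{46})}$ make the proof of Theorem \ref{thm:onespintooldexample} go through, and your proposal is precisely that iteration, with the key points (vanishing of $LCH^{\le 0}$ of the perturbed iterated spin of the unknot for \emph{all} augmentation pairs, and the repeated K\"unneth formula applied to augmentations induced by spun fillings) handled as in the single-spin case. The details you fill in are correct, so this matches the intended proof.
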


\bibliographystyle{alphanum}
\bibliography{references}

\end{document}